\newtheorem{theorem}{Theorem}[section]
\newtheorem{proposition}[theorem]{Proposition}
\newtheorem{lemma}[theorem]{Lemma}
\newtheorem{corollary}[theorem]{Corollary}
\newtheorem{definition}[theorem]{Definition}
\newtheorem{remark}[theorem]{Remark}
\begin{document}
\title{On uniqueness of dissipative solutions of the Camassa-Holm equation\\
\small{}}
\author{{Grzegorz Jamr\'oz} \\
 {\it \small  University of Warwick, Mathematics Institute,}\\
{\it \small Zeeman Building, Coventry CV4 7AL, United Kingdom}\\
{\it \small e-mail: G.Jamroz@warwick.ac.uk}
}

\maketitle

\abstract{
We prove that dissipative weak solutions of the Camassa-Holm equation are unique. Thus we complete the global well-posedness theory of this celebrated model of shallow water, initiated by a general proof of existence in [Z. Xin, P. Zhang \emph{Comm. Pure Appl. Math.} {\bf 53} (2000)]. As the dissipative weak solutions, being viscosity solutions, seem to constitute a physically relevant class of solutions  in the wave-breaking regime, the result provides mathematical rationale for the feasibility of the Camassa-Holm equation as a model of water waves encompassing both soliton interactions and wave-breaking. 
}
\newline \, \\
{\bf Keywords:}  Camassa-Holm, weak solution, dissipative solution, generalized characteristics, uniqueness\\
{\bf MSC Classification 2010:} 35L65, 37K10

\section{Introduction}
Mathematical theory of shallow water has  been of constant interest to the fluid mechanics community as it can serve to model various important phenomena occuring in rivers, ponds, continental shelfs etc. The general Euler equations of fluid dynamics being rather intractable, the quest has been to find simplified equations which would capture the most important observable phenomena.   
One of the landmarks herein constitute the experiments of John Scott Russell in 1834 on shallow canals, which, somewhat accidentally, brought him to the discovery of solitons -- long waves traveling over large distances virtually without changing their shape and whose shape is preserved after interaction with another wave of the same kind. This led, after many years of theoretical work by, among others, Lord Rayleigh, Joseph Boussinesq, Diederik Korteweg and Gustav de Vries, to the discovery of the Korteweg de Vries (KdV) equation, 
\begin{equation*}
u_t + 6uu_x + u_{xxx}=0,
\end{equation*}
which became the most important model of solitons and has found applications not only in the shallow water theory but also in other areas of physics such as physics of plasma or crystallography.

It soon became clear, however, that in the realm of shallow waters there is another observable phenomenon, which has to be accounted for by a feasible mathematical model -- the wave-breaking, which consists in the wave profile developing a vertical slope in finite time while preserving bounded amplitude. As the KdV equation exhibits global smooth solutions  for smooth intial data (see \cite{KPV}), new equations modeling the wave-breaking (yet lacking the soliton solutions), such as the Whitham equation, (see \cite{Whitham, CE3}) were proposed.
This was, however, far from satisfactory and finally in 1993, after a long search, a new model encompassing both phenomena -- solitons and wave-breaking, was derived in \cite{CH} by using an asymptotic expansion in the hamiltonian for the vertically averaged incompressible Euler equations (see discussion in \cite{CHH}). The model, nowadays known as the Camassa-Holm equation, reads
\begin{equation}
\label{Eq_CHoriginall}
u_t - u_{xxt} + 3uu_x = 2u_x u_{xx} + uu_{xxx},
\end{equation}
where $t$ denotes time, $x$ is a one-dimensional space variable and $u(t,x)$ corresponds to the horizontal velocity of water surface (see \cite{CL}). The theory of smooth solutions for \eqref{Eq_CHoriginall} is due to A. Constantin and J. Escher \cite{CE1,CE2}, who also formulated fairly general conditions precluding wave-breaking (\cite{CE3,CE4}), see also \cite{Brandolese} for recent developments. In general, however, wave-breaking, understood as blow-up of the supremum norm of the derivative $u_x$ does occur, as predicted by Camassa and Holm in \cite{CH}, and there is no way to prevent it. This means that the global well-posedness theory has to be sought for in the more general space of weak (i.e. non-smooth) solutions, which are no longer three times differentiable. A convenient framework for this purpose is provided by the system
\begin{eqnarray}
\partial_t u + \partial_x (u^2 / 2) + P_x &=& 0, \label{eq_WeakCH1}\\
P(t,x) &=& \frac 1 2 e^{-|x|}* \left(u^2(t,\cdot) + \frac {u_x^2(t,\cdot)}{2}\right),\\
u(t=0, \cdot) &=& u_0, \label{eq_WeakCH3}
\end{eqnarray}
with $*$ denoting the convolution, which for smooth solutions is equivalent to \eqref{Eq_CHoriginall}, with equivalence  provided through the nonlocal operator $(I-\partial_{xx})^{-1}$, which satisfies $(I-\partial_{xx})(\frac 1 2 e^{-|x|}) = \delta(x)$. Note that for \eqref{eq_WeakCH1}-\eqref{eq_WeakCH3} to make sense it suffices to assume $u \in L^{\infty}([0,\infty),H^1(\mathbb{R}))$, which corresponds to the maximal physically relevant class of solutions with bounded total energy, which is given by
$$E(t) = \frac 1 2 \int_{\mathbb{R}} (u^2(t,x)+u_x^2(t,x)) dx.$$
Importantly, the total energy is conserved for smooth solutions (see e.g.  Section 2 of \cite{BC2}). The conservation of energy, however, does not hold in general for weak solutions, which allows one to encompass, for instance, the phenomena related to wave breaking, in which the energy is dissipated.

\begin{definition}[Weak solutions]
\label{Def_Weak}
Let $u_0 \in H^1(\mathbb{R})$. We say that a function $u: [0,\infty) \times \mathbb{R} \to \mathbb{R}$ is a \emph{weak solution} of \eqref{eq_WeakCH1}-\eqref{eq_WeakCH3} if
\begin{itemize}
\item $u(t,x) \in C([0,\infty) \times \mathbb{R}) \cap L^{\infty}([0,\infty), H^1(\mathbb{R})),$
\item $u(t=0,x) = u_0(x)$ for $x \in \mathbb{R}$,
\item $u(t,x)$ satisfies \eqref{eq_WeakCH1} in the sense of distributions,
\end{itemize}
where $H^1(\mathbb{R})$ is the Sobolev space with the norm $\|f\|_{H^1(\mathbb{R})}^2 = \int_{\mathbb{R}} (f^2(x) + f_x^2(x))dx$, $C([0,\infty) \times \mathbb{R})$ denotes the space of continuous functions on $[0,\infty) \times \mathbb{R}$ and $L^{\infty}([0,\infty), H^1(\mathbb{R}))$ stands for essentially bounded measurable functions from the time interval $[0,\infty)$ into $H^1(\mathbb{R})$.
\end{definition}
Existence of weak solutions of the Camassa-Holm equation was proven by Xin and Zhang in \cite{XZ} (see also \cite{CE2} for a previous conditional result) by the method of vanishing viscosity, which resulted in the so-called \emph{dissipative weak solutions}, which are a counterpart of the entropy solutions known from the theory of hyperbolic conservation laws.

\begin{definition}[Dissipative weak solutions]
\label{Def_dissipative}
A weak solution of \eqref{eq_WeakCH1}-\eqref{eq_WeakCH3} is called \emph{dissipative} if
\begin{itemize}
\item $\partial_x u(t,x) \le const \left(1+ \frac 1 t\right)$  (Oleinik-type condition)
\item $\|u(t,\cdot)\|_{H^1(\mathbb{R})} \le  \|u(0,\cdot)\|_{H^1(\mathbb{R})}$ for every $t > 0$ (weak energy condition).
\end{itemize}
\end{definition}

Weak solutions of \eqref{eq_WeakCH1}-\eqref{eq_WeakCH3} are by no means unique.  On top of dissipative weak solutions, there exist, for instance, classes of conservative  \cite{BC} solutions, which conserve locally the energy, and intermediate \cite{GHR2} solutions, which interpolate between the conservative and dissipative ones. The conservative solutions were shown to be unique (in a somewhat restricted space in comparison to Definition \ref{Def_Weak}, however) by Bressan, Chen and Zhang \cite{BCZ} and in  \cite{BF,GHR, GHR1, HR2} distance functionals yielding conditional uniqueness results were constructed. The uniqueness of dissipative solutions, on the other hand, has been very elusive and, although anticipated (see e.g. the comment after inequality (1.2) in \cite{BC2} or Section 4 in \cite{BressanB}), never proved. It has thus remained, despite over 15 years of interest of leading experts in the field, an outstading open problem.
Apart from the conditional weak-strong uniqueness result in \cite{XZ2}, the only available results in this direction have been constructions of specific global semigroups of dissipative solutions (\cite{BC2,HR}). As remarked, however, by Bressan and Constantin in \cite{BC2} such a construction does not exclude the possibility that other constructive procedures yield distinct dissipative solutions and so the issue of uniqueness is rather delicate.

In this paper we provide an affirmative answer to the question of uniqueness.
\begin{theorem}
\label{th_DU}
Let $u_0 \in H^1(\mathbb{R})$. There exists a unique dissipative weak solution of \eqref{eq_WeakCH1}-\eqref{eq_WeakCH3} with initial condition $u_0$.
\end{theorem}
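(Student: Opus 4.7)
My proposed approach is the method of generalized characteristics, in the spirit of Dafermos' theory for entropy solutions of scalar conservation laws, adapted to the non-local Camassa-Holm setting (as hinted by the keywords). The Oleinik-type bound $\partial_x u(t,x) \le C(1+1/t)$ is a one-sided Lipschitz condition in the space variable, which is exactly what is needed to guarantee that the characteristic ODE $\dot\xi(t) = u(t,\xi(t))$ admits a well-defined forward flow $\xi(t,y)$ even though $u$ is only H\"older continuous via the embedding $H^1 \hookrightarrow C^{1/2}$. Moreover this bound forces $y\mapsto \xi(t,y)$ to be non-decreasing, so characteristics may merge but never split forward in time --- the Lagrangian imprint of dissipation.

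I would first fix a dissipative weak solution $u$ and set up the flow $\xi(t,y)$ with $\xi(0,y)=y$. Formally, along a characteristic,
\[
\frac{d}{dt} u(t,\xi(t)) = u_t + u\,u_x = -\partial_x(u^2/2) + u\,u_x - P_x = -P_x(t,\xi(t)),
\]
and because $P$ is the convolution of the smooth kernel $\tfrac{1}{2}e^{-|x|}$ with an $L^\infty_t L^1_x$ energy density, both $P$ and $P_x$ are bounded and globally Lipschitz in $x$, uniformly in $t$. The next task is to close the system by tracking, along each characteristic, a surrogate for the energy density $u^2 + u_x^2/2$ which drives the non-local term $P$. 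My plan is to introduce a Lagrangian energy variable (roughly, the pull-back under $\xi$ of the energy measure), derive its Carath\'eodory evolution using the Oleinik bound (to make one-sided sense of $u_x$ along the flow) together with the weak energy inequality (to prescribe the rate of dissipation), and show that the resulting extended Lagrangian system is determined by the initial datum $u_0$ alone. Comparing two dissipative solutions with the same $u_0$, a Gr\"onwall-type argument then forces their Lagrangian trajectories, and hence the corresponding Eulerian profiles, to coincide.

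The main obstacle I anticipate is the bookkeeping of energy dissipation at wave-breaking times. For conservative solutions the energy is an absolutely continuous measure that fully determines $P$, but in the dissipative case the energy can concentrate into singular parts which must then be instantaneously removed. The delicate point is whether the \emph{where} and \emph{how much} of this removal are rigidly prescribed by the admissibility conditions, or whether they leave room for two distinct dissipative solutions --- precisely the loophole noted by Bressan and Constantin. The crux of the argument is therefore to show that the Oleinik bound, combined with the weak energy inequality, pins down the dissipation mechanism uniquely along characteristics; once this rigidity is established the Gr\"onwall comparison closes the proof. A secondary technical issue is that the Oleinik constant $C(1+1/t)$ degenerates as $t\downarrow 0$, so the characteristic system must be built forward from $t=\varepsilon>0$ and then extended to the initial time by continuity, using that $u_0\in H^1$ already gives a continuous initial profile.
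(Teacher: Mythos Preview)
Your high-level strategy --- build the forward characteristic flow from the one-sided Lipschitz bound, derive a Lagrangian ODE system for $u$, $u_x$, and an energy-type variable along characteristics, and argue that this system is uniquely determined by $u_0$ --- is indeed the backbone of the paper's proof. The paper realises the last step concretely by showing that any dissipative weak solution, after the Bressan--Constantin change of variables, yields a triple $(\tilde u,\tilde v,\tilde q)$ solving the discontinuous ODE system \eqref{eq_BCODE}, and then invoking the well-posedness of that system from \cite{BC2}. So the comparison is not a direct Gr\"onwall between two dissipative solutions but goes through uniqueness for \eqref{eq_BCODE}; this matters because a direct Gr\"onwall would have to control $P[u_1]-P[u_2]$ while the two solutions might a priori dissipate energy at different locations.

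The genuine gap in your outline is precisely the step you flag as the crux: showing that the dissipation mechanism is rigidly prescribed. The ODE $\dot v = u^2 - \tfrac12 v^2 - P$ for $v=u_x$ along a characteristic is only available for initial points $\zeta$ in a ``good'' set $S_T$ (where difference quotients stay bounded on $[0,T]$), and since characteristics can merge, the image $S_T(T)$ could a priori be a null set --- in which case the Lagrangian description would say nothing about $u(T,\cdot)$ and the non-local term $P$ could not be recovered from the Lagrangian data. The paper's main technical contribution (Theorem~\ref{Th_STT}) is that $S_T(T)$ has full Lebesgue measure, and its proof requires an idea your proposal does not contain: one passes to the \emph{backward} solution $u^{Tb}(t,x)=-u(T-t,x)$, a weak but non-dissipative solution, and runs the characteristic analysis from time $T$ backwards, combining it with the energy-based convergence Lemmas~\ref{Lem_limux2}--\ref{Lem_L1Bt} to rule out any set of positive measure of ``bad'' endpoints. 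Only after this is established can one check that the constructed $(\tilde u,\tilde v,\tilde q)$ actually satisfies \eqref{eq_BCODE} with the correct $\tilde P,\tilde P_x$ (Lemma~\ref{Lem_Pequivtilde}), and that breaking occurs only via $u_x\to-\infty$ (Proposition~\ref{Prop_Tbr}(iii)), which is what forces the switching in \eqref{eq_BCODE}. Your concern about the Oleinik constant degenerating at $t=0$ is a minor issue by comparison; the substantive obstruction is the full-measure statement for $S_T(T)$.
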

Thus we demonstrate the second major ingredient of the well-posedness of the Camassa-Holm equation in the sense of Hadamard (see Table \ref{Tab1}), the first being existence of solutions proven by Xin and Zhang. The third ingredient -- continuity of solutions with respect to perturbation of initial data -- follows immediately (provided Theorem \ref{th_DU} is satisfied) from the theory of Bressan-Constantin \cite[Theorem 8.1]{BC2}. More precisely, \cite[Theorem 8.1]{BC2} asserts that if $\|u_0^n - u_0\|_{H^1(\mathbb{R})} \rightarrow 0$ then the corresponding solutions $u_n(t,x)$ converge to $u(t,x)$ uniformly for $t,x$ in bounded sets. 
\begin{table}[h!]
\begin{tabular}{|l|l|}
  \hline 
  Existence & Xin-Zhang \cite{XZ}\\
  \hline
  Uniqueness  & {\bf Theorem \ref{th_DU}} \\
  \hline
  Continuity with respect to initial data  & Bressan-Constantin \cite{BC2} provided uniqueness holds
  \\
  \hline
\end{tabular} 
\caption{Well-posedness of dissipative weak solutions of the Camassa-Holm equation. The existence was proven in 2000 by Xin and Zhang. A conditional result on continuity with respect to initial data (under assumption that Theorem \ref{th_DU} holds) was proven in 2007 by Bressan and Constantin. Finally, Theorem \ref{th_DU}, using the theory of Bressan-Constantin from \cite{BC2} resolves the problem of uniqueness.}
\label{Tab1}
\end{table}
We conclude that the Camassa-Holm equation is well-posed in the sense of Hadamard, which confirms feasibility of the equation even after wave breaking. Let us here emphasize that due to the fact that dissipative solutions are obtained by the vanishing viscosity method and thus correspond to entropy solutions, they presumably constitute the most physically relevant class of solutions.
Let us also mention that uniqueness of dissipative weak solutions implies that \emph{every} such solution can be obtained by the construction of Bressan-Constantin in \cite{BC2} and, in particular, every dissipative solution has better regularity properties assumed in \cite[Definition 2.1]{BC2}:
\begin{itemize}
\item H\"older continuity,
\item Lipschitz continuity of $t \mapsto u(t,\cdot)$ from $[0,\infty)$ into $L^2(\mathbb{R})$,
\item satisfaction of the equality $\frac {d}{dt} u = -uu_x - P_x$ for a.e. $t \in [0,\infty)$ where the equality is understood as identity of functions in $L^2(\mathbb{R})$.

\end{itemize}

To prove Theorem \ref{th_DU} we show that dissipative weak solutions can be completely described in terms of a representation calculable along characteristics. It turns out that this representation, after suitable modification on a negligible set,  satisfies the system of ordinary differential equations used by Bressan and Constantin in \cite{BC2} to construct a global semigroup of dissipative solutions. As the solutions of that system are unique (see \cite{BC2}), so are dissipative weak solutions.

The paper builds upon the research  in \cite{CHGJ}, where a framework for studying weak solutions of the Camassa-Holm and related models by means of nonunique characteristics was proposed. The setup of the framework follows the ideas from \cite{TCGJ} where, together with T. Cie\'slak, we developed a similar framework in the context of the related Hunter-Saxton equation. This led us to a positive verification of the hypothesis due to Zhang and Zheng \cite{ZhangZheng} that maximal dissipation criterion selects the unique dissipative solutions out of the collection of weak solutions of the Hunter-Saxton equation. 
The paper \cite{TCGJ}, in turn, was inspired by the ideas of C. Dafermos dating back to the 1970s (\cite{DafGC}), where the possibility of studying solutions of hyperbolic conservation laws by means of nonunique characteristics was demonstrated. These ideas have recently been revisited and applied by Dafermos in the context of Hunter-Saxton equation \cite{DafHS} and also by Bressan, Chen and Zhang  in \cite{BCZ} to study uniqueness of conservative solutions of the Camassa-Holm equation. 

To conclude, let us give a few more detailed remarks regarding the proof of Theorem \ref{th_DU}, which, in particular, highlight some of the new challenges in comparison to related previous results.

\begin{enumerate}[i)]
\item
The strategy of proof of Theorem \ref{th_DU} resembles in principle the strategy developed by Dafermos in \cite{DafHS} for the related, yet considerably simpler, Hunter-Saxton equation and is to some extent similar to the strategy used in \cite{BCZ} in the case of conservative solutions of the  Camassa-Holm equation. The main new challenge  consists in showing that almost every $(t,x) \in [0,\infty) \times \mathbb{R}$ lies on a 'good' characteristic curve, along which certain ordinary differential equation for $u_x$ is satisfied. In \cite{DafHS} this property was demonstrated a posteriori, after first obtaining an explicit representation formula for solutions, which is not possible for the Camassa-Holm equation. In \cite{BCZ}, on the other hand, this property followed from the properties of the well-thought-out change of variables $(t,x) \to (t,\beta)$, which transformed the solution $u(t,x)$ into a Lipschitz continuous function $u(t,\beta)$, and allowed the authors to use  directly the framework from \cite{DafHS}. 
In the dissipative case such a change of variables being unavailable, we stick to the original variables $(t,x)$ and resolve this issue (see Theorem \ref{Th_STT} in Section \ref{Sec_crucial}) by considering for every timepoint $T>0$ the backward solution and characteristics emanating from time $T$ backwards in time. As the backward solution is no longer dissipative, the general theory of weak solutions of the Camassa-Holm equation developed in \cite{CHGJ} seems to be necessary. Finally, let us emphasize that the approach with consideration of backward characteristics seems more general than previous methods and gives better insights into the structure of solutions, especially in applications involving general weak (not dissipative) solutions.

\item For the Hunter-Saxton equation the wave-breaking time along every given characteristic is determined solely by the value $u_0'$ at the starting point of this characterstic. For the nonlocal Camassa-Holm equation, on the other hand, the breaking time depends on the whole evolution in time and there is no explicit representation formula for dissipative solutions. This means in particular that only estimates of breaking time are available, which leads to further complications such as e.g. the availability of change of variables formulas in the Stjeltjes integral only on certain 'good' sets, see Proposition \ref{Prop_ChangeOfV}. In the case of conservative solutions these complications were resolved by the change of variables in \cite{BCZ}, such a change of variables being however, as mentioned before, unavailable in the case of dissipative solutions.  
\item Some of the results used in the present paper (see Section \ref{Sec_ConvLemmas}) were originally inspired and derived as simple consequences of the more general theory developed in \cite{CHGJ2} (which, in turn, relies on the second part of \cite{CHGJ}). Nevertheless, to make the proof of uniqueness as simple and self-contained as possible, we decided to present alternative proofs in special cases tailored specifically to the needs of the present paper. Thus we circumvent the necessity of resorting to the more complicated theory from \cite{CHGJ2}.
\end{enumerate}

\noindent The structure of the paper is the following. In the next section we review the framework of nonunique characteristics in the context of the Camassa-Holm equation introduced in the first part of \cite{CHGJ}. Section \ref{Sec_ConvLemmas} is devoted to proving some useful convergence results for dissipative weak solutions whereas in Section \ref{Sec_backward} we discuss the notion of backward solution. In Section \ref{Sec_crucial} we prove the key technical result on representation of dissipative solutions by means of characteristics. Finally, in Section \ref{Sec_MainProof} we present the construction of a semigroup of dissipative solutions from \cite{BC2} and prove Theorem \ref{th_DU}, showing that every weak dissipative solution coincides with the solution constructed by Bressan and Constantin.

{\bf Acknowledgements.} 
This research was supported by the EU Grant no. 706283 VORTSHEET.
The author would like to express his gratitude to Tomasz Cie\'slak, from the Institute of Mathematics, Polish Academy of Sciences in Warsaw, for helpful discussions regarding the paper. 

\section{Characteristics and Camassa-Holm}
\label{Sec_Mresults}
In this section we introduce the notation and present some of the key results of the framework developed in \cite{CHGJ}. 
The first proposition (Proposition 2.4 from \cite{CHGJ}), which is a straightforward consequence of \cite[Lemma 3.1]{DafHS} provides the existence of (nonunique) characteristics.  
\begin{proposition}
\label{Prop_Daf}
Let $u$ be a weak solution of \eqref{eq_WeakCH1}-\eqref{eq_WeakCH3}. Then for every $\zeta \in \mathbb{R}$ and every initial timepoint $t_0 \ge 0$ there exists a (nonunique) characteristic of $u$ emanating from $\zeta$, i.e. a function $\zeta : [t_0,\infty) \to \mathbb{R}$, which satisfies:
\begin{itemize}
\item $\zeta(t_0)=\zeta$,
\item $\frac d {dt} \zeta(t) = u(t,\zeta(t)),$
\item $\frac d {dt} u(t,\zeta(t)) = -P_x(t,\zeta(t)).$
\end{itemize}
\end{proposition}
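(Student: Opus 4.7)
My plan is to follow the standard Peano-plus-distributional-identification scheme, as in Lemma 3.1 of \cite{DafHS}, adapted here to the nonlocal Camassa-Holm forcing $P_x$.

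First, I would record regularity of the ingredients. Since $u \in L^\infty([0,\infty), H^1(\mathbb{R}))$, Sobolev embedding gives a uniform bound on $\|u(t,\cdot)\|_{L^\infty}$ while (by hypothesis) $u$ is continuous on $[0,\infty)\times\mathbb{R}$. Writing $P_x = -\tfrac{1}{2}\mathrm{sgn}(x)e^{-|x|}*(u^2 + u_x^2/2)$ and using $\|u^2+u_x^2/2\|_{L^1_x} \le \tfrac{1}{2}\|u(t,\cdot)\|_{H^1}^2$, I obtain a uniform $(t,x)$-bound on $P_x$ together with continuity of $P_x(t,\cdot)$ in $x$ (via dominated convergence). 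Joint continuity in $(t,x)$ of the forcing along a continuous curve is enough for what follows.

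Second, because $u$ is continuous and uniformly bounded, Peano's theorem yields $\zeta:[t_0,\infty)\to\mathbb{R}$ with $\zeta(t_0)=\zeta$ and $\dot\zeta(t)=u(t,\zeta(t))$, extended to all of $[t_0,\infty)$ since $|\dot\zeta|$ is uniformly bounded. Nonuniqueness is permitted and plays no role.

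The heart of the argument is the identity $\tfrac{d}{dt}u(t,\zeta(t)) = -P_x(t,\zeta(t))$. I would derive this by testing the distributional equation $u_t+\partial_x(u^2/2)=-P_x$ against $\phi_\epsilon(t,x)=\alpha(t)\chi_\epsilon(x-\zeta(t))$, where $\alpha\in C_c^\infty((t_0,\infty))$ and $\chi_\epsilon$ is a standard mollifier. Keeping the nonlinearity in divergence form and integrating by parts so that all derivatives fall on $\phi_\epsilon$, and using $\dot\zeta(t)=u(t,\zeta(t))$, the left-hand side rewrites as
\[
-\int \alpha'(t)\,(\chi_\epsilon * u(t,\cdot))(\zeta(t))\,dt \;+\; R_\epsilon,
\]
where $R_\epsilon$ is controlled by $\sup_{|x-\zeta(t)|\le\epsilon}|u(t,x)-u(t,\zeta(t))|$ and vanishes as $\epsilon\to 0$ by continuity of $u$. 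The right-hand side converges to $-\int \alpha(t)P_x(t,\zeta(t))\,dt$ by continuity and boundedness of $P_x$. Arbitrariness of $\alpha$ yields $\tfrac{d}{dt}u(t,\zeta(t))=-P_x(t,\zeta(t))$ in $\mathcal{D}'((t_0,\infty))$, and since the right-hand side is bounded, $t\mapsto u(t,\zeta(t))$ is absolutely continuous and the identity holds a.e.

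The main obstacle I anticipate is precisely the quadratic term $\partial_x(u^2/2)=uu_x$, because $u_x$ is only $L^\infty_t L^2_x$ and cannot be evaluated pointwise. The cure is to never move the derivative off the test function: keeping $u^2/2$ undifferentiated reduces the $\epsilon\to 0$ limit to the continuity of $u$ at $(t,\zeta(t))$ and is the one place where the Dafermos trick really has to be executed with care. Everything else—Peano, the boundedness estimates for $P_x$, and the density argument in $\alpha$—is routine once this step is handled.
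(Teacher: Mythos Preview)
Your approach is exactly the one the paper invokes: it cites the result as Proposition~2.4 of \cite{CHGJ}, which in turn is a direct adaptation of \cite[Lemma~3.1]{DafHS}, i.e.\ Peano existence for $\dot\zeta=u(t,\zeta)$ followed by testing the weak formulation against $\alpha(t)\chi_\epsilon(x-\zeta(t))$. So strategy and execution match.

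One point to tighten. Your claim that ``$R_\epsilon$ is controlled by $\sup_{|x-\zeta(t)|\le\epsilon}|u(t,x)-u(t,\zeta(t))|$ and vanishes by continuity of $u$'' is not quite enough as stated: since $\|\chi_\epsilon'\|_{L^1}\sim\epsilon^{-1}$, mere continuity (or even the $C^{1/2}$ H\"older bound coming from $H^1$) gives only a bounded, not a vanishing, remainder. What saves you is the quadratic structure: after subtracting the value at $\zeta(t)$ the integrand is exactly $\tfrac12(u(t,x)-u(t,\zeta(t)))^2$, and Cauchy--Schwarz gives
\[
(u(t,x)-u(t,\zeta(t)))^2 \le |x-\zeta(t)|\int_{\zeta(t)-\epsilon}^{\zeta(t)+\epsilon} u_x(t,y)^2\,dy,
\]
so that $|R_\epsilon|\le C\int|\alpha(t)|\int_{\zeta(t)-\epsilon}^{\zeta(t)+\epsilon}u_x(t,y)^2\,dy\,dt\to 0$ by dominated convergence and $u_x(t,\cdot)\in L^2$. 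In other words, it is the $H^1$ regularity of $u$, not bare continuity, that closes the estimate; once you insert this, the argument is complete.
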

\begin{remark}
Note that $P_x = -\frac 1 2 sgn(x)e^{-|x|} * (u^2 + \frac 1 2 u_x^2)$ and hence, by Young's inequality and boundednes of energy of $u$ we have $\|P_x\|_{L^\infty([0,\infty) \times \mathbb{R})}<\infty$. Consequently, $t \mapsto u(t,\zeta(t))$ is a Lipschitz continuous function and $t \mapsto \zeta(t)$ is continuously differentiable with Lipschitz continuous derivative.
\end{remark}

In the following, for a given weak solution $u$ of \eqref{eq_WeakCH1}-\eqref{eq_WeakCH3} by $\zeta(\cdot)$ we will denote any characteristic satisfying the conditions from Proposition \ref{Prop_Daf}.
 Similarly, $\eta(\cdot)$ will be a characteristic satisfying $\eta(t_0) = \eta$, $\frac d {dt}\eta(t) = u(t,\eta(t))$, $\frac d {dt} u(t,\eta(t)) = -P_x(t,\eta(t))$. If not specified otherwise, $t_0$ will be considered to be equal $0$.  
 
 A characteristic $\zeta(\cdot)$ is, by definition, \emph{unique forwards} if, for given $u$, $t_0$, $\zeta$ there exists exactly one function $\zeta(\cdot)$ satisfying the properties from Proposition \ref{Prop_Daf}. Note that existence of at least one such function is guaranteed by Proposition \ref{Prop_Daf}. We will say that a characteristic $\zeta(\cdot)$ is \emph{unique forwards} on $[t_0,T]$ if all characteristics $\zeta(\cdot)$ of $u$ satisfying $\zeta(t_0)=\zeta$ coincide, when restricted to time interval $[t_0,T]$. Clearly, uniqueness forwards on $[t_0,T]$ does not imply uniqueness forwards on $[t_0,T_2]$ for $T_2>T$.  A characteristic $\zeta(\cdot)$ is \emph{unique backwards} on $[t_0,T]$ if for every $\eta \in \mathbb{R}$ and any characteristic $\eta(\cdot)$ of $u$ the equality $\eta(T) = \zeta(T)$ implies $\eta(s) = \zeta(s)$ for every $s \in [t_0,T]$. 
 
Next, we define the 'difference quotient of $u$ along characteristics' by $$\omega(s):=\frac{u(s,\eta(s))-u(s,\zeta(s))}{\eta(s)-\zeta(s)},$$ where $\zeta(\cdot), \eta(\cdot)$ are any characteristics as discussed above with $t_0$ either given explicitly or equal $0$ otherwise.
Usually, $\zeta(\cdot)$ will be unique forwards, however $\eta(\cdot)$ not necessarily. In such a case, the computations involving $\omega$ will hold for every fixed $\eta(\cdot)$. Let us also remark that boundedness of $\|u(t,\cdot)\|_{H^1(\mathbb{R})}$ for every $t$ implies that $u(t,\cdot)$ is absolutely continuous for every $t \ge 0$. In particular, $u(t,x_2) - u(t,x_1) = \int_{x_1}^{x_2} u_x(t,x)dx$ for every $x_1,x_2 \in \mathbb{R}$ and every $t \in [0,\infty)$ and so 
$\omega(s) = \frac{u(s,\eta(s))-u(s,\zeta(s))}{\eta(s)-\zeta(s)} = \frac{\int_{\zeta(s)}^{\eta(s)}u_x(s,y)dy}{\eta(s)-\zeta(s)}.$ This implies that if $\zeta(s)$ is a Lebesgue point of $u_x(s,\cdot)$ then $\omega(s)$ converges to $u_x(s,\zeta(s))$ as $\eta(s) \to \zeta(s)$.

Finally, $\mathcal{L}^1$ stands for the one-dimensional Lebesgue measure.

Let us now introduce one of the key technical definitions of the framework  (\cite[Definition 5.8]{CHGJ}) 
\begin{definition}
\label{Def_Lt}
\begin{eqnarray*}
L_{T}^{unique,N}&:=& \{\zeta \in \mathbb{R}: \zeta(\cdot) \mbox{ is unique forwards on $[0,T]$, } \\ && \zeta(s) \mbox{ is a Lebesgue point of } u_x(s,\cdot) \mbox{ for almost every } s \in [0,T] \mbox{ and }\\ &&\forall_{\eta \in (\zeta- \frac 1 N, \zeta) \cup (\zeta,\zeta+ \frac 1 N),  s \in [0,T]} -N \le \omega(s) \le N \} ,\\
L_{T}^{unique} &:=& \bigcup_{N=1}^{\infty}L_{T}^{unique,N}.
\end{eqnarray*}
\end{definition}

The rationale behind this definition is contained in the following result, which states that along characteristics emanating from $L_{T}^{unique}$ not only $u$, as stated in Proposition \ref{Prop_Daf}, but also $u_x$ satisfies a certain ordinary differential equation.  
The formulation below is a combination of Proposition 2.5, Corollary 6.1 and Lemmas 4.1 and 4.2 from \cite{CHGJ}.

\begin{proposition}[Proposition 2.5, Corollary 6.1 from \cite{CHGJ}]
\label{Prop_MainCH}
Let $u$ be a weak solution of \eqref{eq_WeakCH1}-\eqref{eq_WeakCH3}. 
There exists a family of sets $\{S_T\}_{T>0}$, $S_T \subset \mathbb{R}$, such that for every $\zeta \in S_T$ the characteristic $\zeta(\cdot)$ is unique forwards and backwards on $[0,T]$ and for $0 \le t < T$
\begin{equation}
\label{Eq_PropL}
\dot{v}(t) = u^2(t) - \frac 1 2 v^2(t) - P(t) ,
\end{equation}
where $v(t) := u_x(t,\zeta(t))$, $u(t)=u(t,\zeta(t))$ and $P(t) = P(t,\zeta(t))$.
Moreover, $S_{T_1} \subset S_{T_2}$ for $T_1 > T_2$ and $\mathcal{L}^1(\mathbb{R} \backslash \bigcup_{T>0} S_T) = 0$. Finally, for every $\zeta \in S_T$ there exists $N>0$ such that $|v(\cdot)| \le N$ on $[0,T]$.
The sets $S_T$ can be chosen as
\begin{equation*}
S_T = L_T^{unique} \backslash Z_T,
\end{equation*}
where $Z_T \subset \mathbb{R}$ are sets of measure $0$. 
\end{proposition}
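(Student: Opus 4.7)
The plan is to construct $S_T = L_T^{unique} \setminus Z_T$ for a judiciously chosen null set $Z_T$, derive \eqref{Eq_PropL} as the $\eta \to \zeta$ limit of an ODE satisfied by the difference quotient $\omega$, and deduce backwards uniqueness from the bounded-$\omega$ condition built into $L_T^{unique}$ combined with the monotonicity of characteristics for weak solutions of the Camassa--Holm equation.

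First I would write down the ODE satisfied by $\omega$ along the pair $(\zeta(\cdot),\eta(\cdot))$. Using the two identities $\dot\zeta(t)=u(t,\zeta(t))$, $\dot\eta(t)=u(t,\eta(t))$ together with $\frac{d}{dt}u(t,\zeta(t)) = -P_x(t,\zeta(t))$ and the analogous relation for $\eta$, a direct computation gives
\begin{equation*}
\dot\omega(s) \;=\; -\frac{P_x(s,\eta(s))-P_x(s,\zeta(s))}{\eta(s)-\zeta(s)} \;-\; \omega^2(s).
\end{equation*}
Since $P\in L^\infty([0,\infty),W^{2,1}_{\mathrm{loc}}(\mathbb{R}))$ with $P_{xx}=P-u^2-\tfrac12 u_x^2$ a.e., the difference of $P_x$ can be rewritten as the Lebesgue integral of $P-u^2-\tfrac12 u_x^2$ between $\zeta(s)$ and $\eta(s)$. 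For $\zeta\in L_T^{unique,N}$, the defining bound $|\omega(s)|\le N$ ensures that nearby characteristics stay in a shrinking neighborhood of $\zeta(s)$ for each $s$, so that at Lebesgue points of $u_x(s,\cdot)$ (and hence, by the bound $|u_x|\le N$ in that neighborhood, of $u_x^2(s,\cdot)$) the difference quotient of $P_x$ converges to $P(s,\zeta(s))-u^2(s,\zeta(s))-\tfrac12 u_x^2(s,\zeta(s))$. Sending $\eta\to\zeta$ along such a sequence and invoking dominated convergence on $[0,T]$ yields \eqref{Eq_PropL} with $v(s)=u_x(s,\zeta(s))$, and the resulting integrated identity makes $v$ the unique continuous solution of \eqref{Eq_PropL}, so $|v(\cdot)|\le N$ follows from the pointwise bound $|\omega(s)|\le N$ passed to the limit.

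Next I would handle backwards uniqueness on $[0,T]$. Integrating $\frac{d}{ds}(\eta(s)-\zeta(s))=\omega(s)(\eta(s)-\zeta(s))$ gives
\begin{equation*}
|\eta(s)-\zeta(s)| \;=\; |\eta-\zeta|\exp\!\Bigl(\textstyle\int_0^s\omega(\tau)\,d\tau\Bigr),
\end{equation*}
so the $L_T^{unique,N}$ bound forces $|\eta(s)-\zeta(s)|\ge e^{-NT}|\eta-\zeta|$ for every $\eta\in(\zeta-\tfrac1N,\zeta+\tfrac1N)$ and every $s\in[0,T]$. Suppose $\eta(T)=\zeta(T)$ for some characteristic issuing from $\eta\ne\zeta$; by monotonicity of characteristics of weak solutions (they cannot cross), pick $\xi\in(\zeta,\zeta+\tfrac1N)$ strictly between $\zeta$ and $\eta$ (WLOG $\eta>\zeta$). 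Any characteristic $\xi(\cdot)$ is squeezed between $\zeta(\cdot)$ and $\eta(\cdot)$, so $\xi(T)=\zeta(T)$, contradicting $|\xi(T)-\zeta(T)|\ge e^{-NT}|\xi-\zeta|>0$. This proves backwards uniqueness on $[0,T]$ for $\zeta\in L_T^{unique}$; forwards uniqueness is already part of the defining conditions.

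Finally, the nesting $S_{T_1}\subset S_{T_2}$ for $T_1>T_2$ is immediate from the definition of $L_T^{unique}$ (larger $T$ makes the conditions strictly more restrictive), provided the null sets $Z_T$ are chosen compatibly. The full-measure property $\mathcal{L}^1(\mathbb{R}\setminus\bigcup_{T>0}S_T)=0$ reduces to the analogous statement for $L_T^{unique}$, which is the real crux of the matter: one must show that for almost every initial point $\zeta$, there is some finite $T>0$ and some $N$ such that the characteristic $\zeta(\cdot)$ is forward-unique on $[0,T]$ and the quotient $\omega$ stays bounded there. I expect the main obstacle to be exactly this step, since it is \emph{not} automatic from the definition of weak solution; it is where the dissipative Oleinik-type upper bound on $u_x$ (together with the $H^1$ energy bound) is essential, giving a lower bound on the ``focusing rate'' of neighboring characteristics and preventing infinite-speed collapse on a set of positive measure. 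The role of the null set $Z_T$ is cosmetic by comparison: it removes the (measure-zero) $\zeta$ for which the exceptional set of $s$ where $\zeta(s)$ fails the Lebesgue-point condition leaks into issues with the initial condition $v(0)=u_0'(\zeta)$ or with pointwise rather than a.e.\ validity of \eqref{Eq_PropL}.
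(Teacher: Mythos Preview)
The paper does not prove this proposition in situ; it is imported from \cite{CHGJ}. Nonetheless, enough of the underlying machinery is reproduced later in the paper (Lemma~\ref{LemA45}, the Vitali-covering argument in the proof of Proposition~\ref{Prop57new}, and the sets $I_t$, $I_t^{unique}$ recalled in Definition~\ref{def_6755} with Lemmas~\ref{Lem_57chgj}--\ref{Lem_59chgj}) that one can see how the actual proof goes, and it differs from your sketch at the crucial point.

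Your treatment of the ODE for $\omega$, the passage to the limit yielding \eqref{Eq_PropL}, and the backwards-uniqueness argument are all along the right lines (one caveat: the parenthetical ``by the bound $|u_x|\le N$ in that neighborhood'' is not justified, since $|\omega|\le N$ is a bound on \emph{averages} of $u_x$, not on $u_x$ itself; the convergence of the $u_x^2$-average needs a separate Lebesgue-point argument). The genuine gap is in the full-measure step. You write that this is ``where the dissipative Oleinik-type upper bound on $u_x$ \ldots\ is essential.'' That cannot be right: the proposition is stated for an \emph{arbitrary} weak solution, so no Oleinik bound is available, and indeed the paper later applies Proposition~\ref{Prop_MainCH} to the \emph{backward} solution $u^{Tb}$, which is explicitly not dissipative (see Remark after Proposition~4.2 and the derivation of \eqref{Eq_PropLTb}).

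The actual mechanism is the following. The differential inequality $\dot\omega \ge -\omega^2 - LC$ (Lemma~\ref{LemA45}, which uses only the structure of the nonlocal term and the $H^1$ energy bound) yields a \emph{lower} bound on $\omega$ by comparison with the explicit solution $\Omega(t)=\sqrt{LC}\tan(\sqrt{LC}t-\pi/2)$. Hence, setting $I_t:=\{\zeta:\ u_x(0,\zeta)>\Omega(t)\}$, one has $\bigcup_{t>0}I_t$ of full measure (since $\Omega(t)\to-\infty$ as $t\to0^+$), and for $\zeta\in I_t$ the quotient $\omega$ is bounded below on $[0,t]$. The nontrivial \emph{upper} bound on $\omega$ for a.e.\ $\zeta\in I_t$ is obtained not from any Oleinik condition but from a Vitali-covering argument combined with the finiteness of $\int_{\mathbb R}u_x^2(T,\cdot)\,dx$: if a set of positive measure of initial points admitted arbitrarily large $\omega$, then propagating this via the lower differential inequality and summing over a disjoint Vitali family would force $\int u_x^2$ to be infinite (this is exactly the computation carried out in the proof of Proposition~\ref{Prop57new}). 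Finally, a Fubini argument upgrades ``$\omega$ bounded'' to the Lebesgue-point condition for a.e.\ $\zeta$, producing the null sets $Z_T$. In short, the full-measure statement rests on the one-sided Riccati inequality for $\omega$ plus the global $H^1$ bound, not on dissipativity.
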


\begin{remark}
\label{Rem_Modification}
Equation \eqref{Eq_PropL} is satisfied in the sense:
\begin{equation}
\label{eq_6}
v(t)-v(0) = \int_0^t \left(u^2(s,\zeta(s)) - \frac 1 2 u_x^2(s,\zeta(s)) - P(s,\zeta(s))\right) ds
\end{equation}
for almost every $t \in (0,T]$, such that $\zeta(t)$ is a Lebesgue point of $u_x(t,\cdot)$. Hence, for fixed $t \in (0,T]$ \eqref{eq_6} is satisfied for almost every $\zeta(t)$, $\zeta \in S_T$. More precisely, this means that for fixed $t \in (0,T]$ the set $\{y \in \mathbb{R}: y = \zeta(t) \mbox{ for some } \zeta \in S_T \mbox{ and } \eqref{eq_6} \mbox { holds}\}$ is a full measure subset of $S_T(t) := \{\zeta(t): \zeta \in S_T\}$. Finally, once the sets $S_T$ are fixed, one can modify $u_x(t,x)$, for every $t > 0$, on a set of $x$ of Lebesgue measure $0$ in such a way that if $\zeta \in S_T$ then  \eqref{eq_6} holds for \emph{every} $t \in [0,T]$. 
Indeed, even though there is a continuum of parameters $T>0$, this modification involves only such $x$, which fail to be Lebesgue points of $u_x(t,\cdot)$, which is a set of measure $0$ for every $t>0$. Moreover, if $\zeta \in S_{T_1} \cap S_{T_2}$, $T_1<T_2$, then the modifications stemming from $T_1$ and $T_2$ are compatible, since they are defined so that \eqref{eq_6} holds, i.e.  $$u^{modified}_x(t,\zeta(t)) :=  u_x(0,\zeta) + \int_0^t \left(u^2(s,\zeta(s)) - \frac 1 2 u_x^2(s,\zeta(s)) - P(s,\zeta(s))\right) ds.$$  
\end{remark}

Next, let us recall the notion of the rightmost and leftmost characteristics (also called minimal and maximal characteristics, see e.g. \cite{DafGC}). Namely, given a weak solution $u$ of \eqref{eq_WeakCH1}-\eqref{eq_WeakCH3}, for every $t_0 \ge 0$ and $\zeta \in \mathbb{R}$ there exist (see \cite[Corollary 5.2]{CHGJ}) the \emph{rightmost} characteristic $\zeta^r(\cdot)$ and the \emph{leftmost} characteristic $\zeta^l(\cdot)$, which are the unique characteristics of $u$ defined on $[t_0,\infty)$ satisfying 
\begin{itemize}
\item $\zeta^r(t_0) = \zeta = \zeta^l(t_0),$
\item $\frac d {dt} \zeta^r(t) = u(t,\zeta^r(t)),$
\item $\frac d {dt} \zeta^l(t) = u(t,\zeta^l(t)),$
\item $\frac d {dt} u(t,\zeta^r(t)) = -P_x(t,\zeta^r(t)),$
\item $\frac d {dt} u(t,\zeta^l(t)) = -P_x(t,\zeta^l(t)),$
\item $\zeta^l(t) \le \zeta(t) \le \zeta^r(t)$ for $t \in [t_0,\infty)$ and every characteristic $\zeta(\cdot)$ of $u$ with $\zeta(t_0)=\zeta$.
\end{itemize}
The following important property is based on \cite[Section 7]{CHGJ}.
\begin{proposition}[Change of variables formula]
\label{Prop_ChangeOfV}
Let $g$ be a bounded nonnegative Borel measurable function and let $A \subset L_t^{unique}$ be a Borel set, where $t>0$. Then 
\begin{equation*}
\int_{M_t(A)} g(z)dz = \int_A g(M_t(\zeta))M_t'(\zeta)d\zeta,
\end{equation*}
where $M_t(\zeta):= \zeta^l(t)$. Moreover, for $\zeta \in L_t^{unique}$ 
\begin{equation*}
M_t'(\zeta) = e^{\int_0^t v(s)ds},
\end{equation*}
where $v(s)=u_x(s,\zeta(s))$.
\end{proposition}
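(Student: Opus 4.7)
The plan is to first establish the pointwise derivative formula $M_t'(\zeta)=\exp(\int_0^t v(s)\,ds)$ at every $\zeta\in L_t^{unique}$, and then deduce the change of variables identity by combining this with monotonicity of the leftmost characteristic and the classical area formula for Lipschitz functions in one dimension.

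For the derivative, I would fix $\zeta\in L_t^{unique,N}$ and $\eta$ with $0<|\eta-\zeta|<1/N$, and follow the leftmost characteristic $\eta^l(\cdot)$ starting at $\eta$. Writing $\omega(s) = (u(s,\eta^l(s))-u(s,\zeta(s)))/(\eta^l(s)-\zeta(s))$, the identity $\dot{\eta^l}(s)-\dot{\zeta}(s) = \omega(s)(\eta^l(s)-\zeta(s))$ integrates explicitly to
\[
\eta^l(t)-\zeta(t) = (\eta-\zeta)\exp\!\Bigl(\int_0^t \omega(s)\,ds\Bigr).
\]
Definition \ref{Def_Lt} gives $|\omega(s)|\le N$ on $[0,t]$, yielding simultaneously the local Lipschitz estimate $|M_t(\eta)-M_t(\zeta)|\le e^{Nt}|\eta-\zeta|$ and the uniform control $|\eta^l(s)-\zeta(s)|\le e^{Nt}|\eta-\zeta|\to 0$ as $\eta\to\zeta$. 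Using absolute continuity of $u(s,\cdot)$ to rewrite $\omega(s)=(\eta^l(s)-\zeta(s))^{-1}\int_{\zeta(s)}^{\eta^l(s)}u_x(s,y)\,dy$ together with the fact that for almost every $s\in[0,t]$ the point $\zeta(s)$ is a Lebesgue point of $u_x(s,\cdot)$ (built into the definition of $L_t^{unique}$), the Lebesgue differentiation theorem gives $\omega(s)\to v(s)$ pointwise a.e.\ on $[0,t]$. Dominated convergence with dominant $N$ then produces the desired derivative formula.

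For the change of variables formula, I would decompose $A=\bigcup_N (A\cap L_t^{unique,N})$ and further subdivide each $A\cap L_t^{unique,N}$ into countably many Borel pieces contained in intervals of diameter less than $1/N$. On each such piece $M_t$ is Lipschitz with constant $e^{Nt}$ and, by positivity of $M_t'$, strictly increasing, so it extends (via the standard Kirszbraun extension in $\mathbb{R}$) to a Lipschitz function on the real line to which the classical one-dimensional area formula applies. Injectivity of $M_t$ on $A$---which follows from monotonicity of the leftmost characteristic together with $M_t'>0$---reduces the multiplicity factor in the area formula to $\mathbf{1}_{M_t(A)}$, giving the identity piecewise, and countable additivity of both sides then yields the full formula on $A$. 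The main obstacle I anticipate is the technical bookkeeping required to assemble local Lipschitz behavior on the possibly disconnected union $L_t^{unique}=\bigcup_N L_t^{unique,N}$ into a single clean change of variables: one needs to keep track of the compatibility of the piecewise extensions, and to confirm that the injectivity argument via $M_t'>0$ genuinely transfers from the pointwise derivative information to the monotonicity of the assembled map.
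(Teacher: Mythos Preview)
The paper does not actually prove this proposition; it simply records it as ``based on \cite[Section~7]{CHGJ}'' and uses it as a black box. So there is no in-paper argument to compare against, and your proposal stands or falls on its own merits.

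Your argument is essentially correct. The derivative computation is clean: for $\zeta\in L_t^{unique,N}$ the difference quotient $(M_t(\eta)-M_t(\zeta))/(\eta-\zeta)$ equals $\exp\bigl(\int_0^t\omega(s)\,ds\bigr)$ for \emph{every} $\eta$ with $0<|\eta-\zeta|<1/N$ (not only $\eta\in A$), and dominated convergence with bound $N$ gives the limit. Because this is a genuine two-sided derivative of the globally defined map $\zeta\mapsto\zeta^l(t)$, it automatically agrees a.e.\ with the Rademacher derivative of any Lipschitz extension on each piece, so the area-formula step goes through.

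Your self-identified obstacle---global injectivity of $M_t$ on $L_t^{unique}$---is easily handled. The map $\zeta\mapsto\zeta^l(t)$ is nondecreasing on all of $\mathbb{R}$ by the order-preservation of leftmost characteristics. For strictness on $L_t^{unique}$: given $\zeta_1<\zeta_2$ with $\zeta_1\in L_t^{unique,N}$, either $\zeta_2-\zeta_1<1/N$, in which case the lower bound $\omega\ge -N$ applied to $\eta=\zeta_2$ gives $\zeta_2^l(t)-\zeta_1(t)\ge e^{-Nt}(\zeta_2-\zeta_1)>0$; or $\zeta_2-\zeta_1\ge 1/N$, in which case pick any $\eta\in(\zeta_1,\zeta_1+1/N)$ and use the same bound together with $\zeta_2^l(t)\ge\eta^l(t)>\zeta_1(t)$. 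This disposes of the compatibility worry for the countable decomposition.
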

\begin{remark}
By considering sets $A \cap L_t^{unique,N}$ instead of $A$ and passing to the limit $N\to \infty$, it suffices to assume that $g$ in Proposition \ref{Prop_ChangeOfV} is bounded on $L_t^{unique,N}$ for every $N=1,2,\dots$ with a bound possibly dependent on $N$.
\end{remark}
\begin{remark}
\label{Rem_0to0}
If $A \subset L_t^{unique,N}$ has measure $0$ then taking $g \equiv 1$ in Proposition \ref{Prop_ChangeOfV} we obtain $\mathcal{L}^1(M_t(A)) \le e^{Nt} \mathcal{L}^1(A) = 0$. Consequently, if $A \subset L_t^{unique}$ has measure $0$ then also $M_t(A)$ has measure $0$. A similar reasoning shows that if $A \subset L_t^{unique}$ satisfies $\mathcal{L}^1(M_t(A))=0$ then also $\mathcal{L}^1(A)=0$.
\end{remark}

Finally, let us define the notion of a pushforward of a set along characteristics.
\begin{definition}[Thick pushforward] 
\label{Def_Thick}
Let $u$ be a weak solution of \eqref{eq_WeakCH1}-\eqref{eq_WeakCH3}. For any Borel set $B \subset \mathbb{R}$ the \emph{thick pushforward} of $B$ from $0$ to $t$ with $t>0$ is defined as
\begin{equation*}
B(t):= \{\zeta(t): \zeta(\cdot) \mbox{ is any characteristic of $u$ satisfying } \zeta(0) \in B\}.
\end{equation*}
\end{definition}
\begin{remark}
If every characteristic of $u$ emanating from $B$ is unique forwards on $[0,t]$ then we can write simply $B(t) = \{\zeta(t): \zeta \in B\}$. Nevertheless, even in this case we will refer to $B(t)$ as thick pushforward of $B$.
\end{remark}

\section{Convergence lemmas for dissipative solutions}
\label{Sec_ConvLemmas}
In this section we formulate and prove two important lemmas, which will be instrumental in the proof of Theorem \ref{Th_STT}.
\begin{lemma} 
\label{Lem_limux2}
Let $u$ be a dissipative weak solution of \eqref{eq_WeakCH1}-\eqref{eq_WeakCH3}.
Let $B \subset \mathbb{R} \backslash \bigcup_{T>0}S_T$ be a bounded Borel set. Then 
\begin{equation*}
\lim_{t \to 0^+} \int_{B(t)} u_x^2(t,x) dx=0,
\end{equation*}
where $B(t)$ is the thick pushforward, see Definition \ref{Def_Thick}. 
\end{lemma}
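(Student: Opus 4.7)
The plan is to decompose
\begin{equation*}
\int_{B(t)} u_x^2(t,x)\, dx = \int_{B(t)} u_{0,x}^2(x)\, dx + \int_{B(t)}\bigl(u_x^2(t,x) - u_{0,x}^2(x)\bigr)\, dx
\end{equation*}
and control each piece separately. The second integral is bounded by $\|u_x^2(t,\cdot)-u_{0,x}^2\|_{L^1(\mathbb{R})}$, which tends to zero provided $u_x(t,\cdot)\to u_{0,x}$ strongly in $L^2(\mathbb{R})$ as $t\to 0^+$ (use the factorization $|u_x^2(t)-u_{0,x}^2|\le |u_x(t)-u_{0,x}|\cdot|u_x(t)+u_{0,x}|$ and the Cauchy--Schwarz inequality together with the uniform bound on $\|u_x(t)\|_{L^2}$). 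The first piece vanishes by absolute continuity of the Lebesgue integral once one knows that $\mathcal{L}^1(B(t))\to 0$, since $u_{0,x}^2\in L^1(\mathbb{R})$.

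For the strong $L^2$ convergence $u_x(t,\cdot)\to u_{0,x}$ I would argue as follows. Joint continuity $u\in C([0,\infty)\times \mathbb{R})$ gives pointwise convergence $u(t,\cdot)\to u_0$, and combined with the uniform $H^1$ bound from the weak energy condition this forces weak $H^1$ convergence $u(t,\cdot)\rightharpoonup u_0$. Weak lower semicontinuity of $\|\cdot\|_{H^1}$ provides $\liminf_{t\to 0^+}\|u(t,\cdot)\|_{H^1}\ge \|u_0\|_{H^1}$, while the weak energy condition gives the reverse inequality on $\limsup$. Thus $\|u(t,\cdot)\|_{H^1}\to\|u_0\|_{H^1}$, upgrading weak to strong convergence in $H^1(\mathbb{R})$ and yielding in particular $u_x(t,\cdot)\to u_{0,x}$ in $L^2(\mathbb{R})$.

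The main task is to show $\mathcal{L}^1(B(t))\to 0$. Since $\mathcal{L}^1(\mathbb{R}\setminus\bigcup_{T>0}S_T)=0$ by Proposition \ref{Prop_MainCH}, $B$ itself has measure zero and $B\cap S_T=\emptyset$ for every $T>0$. Fix a bounded interval $[a,b]\supset B$, so that by continuity of $a^l(\cdot)$ and $b^r(\cdot)$ we have $B(t)\subset [a^l(t),b^r(t)]$ with $b^r(t)-a^l(t)\to b-a$ as $t\to 0^+$. For arbitrary $T>0$ and $0<t\le T$, the backward uniqueness of characteristics issuing from $S_T$ (Proposition \ref{Prop_MainCH}) implies the disjointness $M_t([a,b]\cap S_T)\cap B(t)=\emptyset$: any common endpoint would give a characteristic from $B$ and one from $S_T$ coalescing at time $t\le T$, forcing (by backward uniqueness up to time $T$) their starting points to coincide, contradicting $B\cap S_T=\emptyset$. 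Consequently
\begin{equation*}
\mathcal{L}^1(B(t))\le \bigl(b^r(t)-a^l(t)\bigr)-\mathcal{L}^1\bigl(M_t([a,b]\cap S_T)\bigr).
\end{equation*}

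To bound the subtracted term from below I would apply the change of variables formula (Proposition \ref{Prop_ChangeOfV}) to get $\mathcal{L}^1(M_t([a,b]\cap S_T))=\int_{[a,b]\cap S_T}\exp\!\bigl(\int_0^t v(s)\,ds\bigr)\,d\zeta$, and then decompose $S_T$ (modulo the null set $Z_T$) into the countable increasing union $\bigcup_N L_T^{unique,N}$. On each $L_T^{unique,N}$ the bound $|v|\le N$ makes the exponential uniformly dominated by $e^{NT}$, so dominated convergence yields $\liminf_{t\to 0^+}\mathcal{L}^1(M_t([a,b]\cap L_T^{unique,N}))\ge \mathcal{L}^1([a,b]\cap L_T^{unique,N})$. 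Letting $N\to\infty$ and then $T\to 0^+$, and using the monotone convergence $\mathcal{L}^1([a,b]\cap S_T)\to b-a$, we obtain $\limsup_{t\to 0^+}\mathcal{L}^1(B(t))\le 0$. The principal obstacle is precisely this dominated-convergence step: the Oleinik upper bound $v\le C(1+1/t)$ is singular at $0$, so the exponential cannot be dominated uniformly in $t$ over all of $S_T$, and the dyadic decomposition via the a priori $L^\infty$ bound on $L_T^{unique,N}$ is what makes the argument work.
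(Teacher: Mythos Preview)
Your proof is correct but follows a genuinely different route from the paper's. The paper argues directly by tracking the \emph{energy} along characteristics: it computes the ODEs satisfied by $u^2(t,\zeta(t))M_t'(\zeta)$ and $u_x^2(t,\zeta(t))M_t'(\zeta)$ for $\zeta$ in a set $S_{D,T,N}=[-D,D]\cap S_T\cap L_T^{unique,N}$, shows these quantities vary by at most $\epsilon/(2D)$ on a short time interval, and concludes via the change of variables that $\int_{S_{D,T,N}(t)}(u^2+u_x^2)\,dx$ stays within $5\epsilon$ of the total initial energy; the weak energy condition together with the disjointness $B(t)\cap S_{D,T,N}(t)=\emptyset$ then forces $\int_{B(t)}u_x^2\le 5\epsilon$. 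In particular, the paper does \emph{not} use (or prove) $\mathcal{L}^1(B(t))\to 0$ here; that is the content of the next, separately proved, Lemma~\ref{Lem_L1Bt}.

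Your argument instead first proves $\mathcal{L}^1(B(t))\to 0$ (essentially reproducing the paper's proof of Lemma~\ref{Lem_L1Bt} inside the present one), and then invokes the soft functional-analytic fact that for dissipative solutions $u(t,\cdot)\to u_0$ \emph{strongly} in $H^1$ as $t\to 0^+$ (weak $H^1$-convergence from continuity plus the uniform bound, upgraded to strong convergence by the norm inequality in the weak energy condition). This cleanly replaces the paper's hands-on ODE computations for $v^2M_t'$ and $u^2M_t'$. What your approach buys is conceptual simplicity and a sharper separation of ideas: the only characteristic-based input you need is the measure estimate, and the rest is standard Hilbert-space analysis. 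What the paper's approach buys is self-containment within the characteristics framework and independence of Lemmas~\ref{Lem_limux2} and~\ref{Lem_L1Bt}; it also avoids appealing to the strong-$H^1$ continuity of $t\mapsto u(t,\cdot)$ at $t=0$, which, while true, is itself a nontrivial consequence of dissipativity.
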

\begin{proof}
Fix $\epsilon>0$. Let $D$ be so large that $B \subset [-D,D]$ and
\begin{equation*}
\int_{[-D,D]} (u_x^2(0,x) + u^2(0,x))dx \ge \int_{\mathbb{R}} (u_x^2(0,x)+u^2(0,x))dx -\epsilon.
\end{equation*}
Since Proposition \ref{Prop_MainCH} implies that $\bigcup_{T>0} S_T = \mathbb{R}$ and $S_{T_1} \supset S_{T_2}$ for $T_1 < T_2$, we can find 
 $T$  so small that 
\begin{equation*}
\int_{[-D,D] \cap S_T} (u_x^2(0,x)+u^2(0,x))dx> \int_{\mathbb{R}} (u_x^2(0,x)+u^2(0,x))dx-2\epsilon.
\end{equation*} 
Finally, recalling that $S_T = L_T^{unique} \backslash Z_T$ for some set $Z_T$ of measure $0$ and $L_{T}^{unique} = \bigcup_{N=1}^{\infty}L_{T}^{unique,N}$, where $\{L_T^{unique,N}\}_{N=1}^\infty$ is an increasing family of sets, take $N$ so large that 
\begin{equation*}
\int_{[-D,D] \cap S_T \cap L_T^{unique,N}} (u_x^2(0,x)+u^2(0,x))dx> \int_{\mathbb{R}} (u_x^2(0,x)+u^2(0,x))dx-3\epsilon.
\end{equation*} 
Denote $S_{D,T,N}:={[-D,D] \cap S_T \cap L_T^{unique,N}}$.
By Proposition \ref{Prop_ChangeOfV} and Proposition \ref{Prop_MainCH} we have for $\zeta \in S_T \cap L_T^{unique,N}$, $t \in [0,T]$ and $u(t) = u(t,\zeta(t))$, $v(t) = u_x(t,\zeta(t))$:
\begin{eqnarray}
M_t'(\zeta) &=& e^{\int_0^t v(s)ds}, \nonumber\\
e^{-tN} &\le& M_t'(\zeta) \le e^{tN},\nonumber\\
\frac{d}{dt} v^2M_t' &=& 2v\dot{v}M_t' + v^2 (vM_t') = vM_t'(2u^2-v^2 -2P+v^2) = 2vM_t'(u^2-P), \label{eq_ddtv2}\\
 \left| \frac{d}{dt} v^2 M_t'\right| &=& \left|2v(u^2-P)M_t' \right| \le 2N (\sup(u^2)+\sup(P)) e^{TN}=: C_1(N),\nonumber\\
\frac{d}{dt} u^2M_t' &=& 2u \dot{u} M_t' + u^2vM_t' = uM_t' (2\dot{u}+uv) = uM_t' (-2P_x + uv),\nonumber\\
\left|\frac{d}{dt} u^2 M_t'\right|&\le& \sup(|u|) e^{TN} (2\sup(|P_x|) + \sup(|u|)N)=:C_2(N).\nonumber
\end{eqnarray}
Let us recall that $u$, $P$, $P_x$ are bounded on $[0,\infty) \times \mathbb{R}$ due to the fact that $u \in L^{\infty}([0,\infty),H^1(\mathbb{R}))$. As a consequence,
$\sup(|u|), \sup(P), \sup(|P_x|)$ above are finite and so are $C_1(N),C_2(N)$.

Let $\tau \le \max\left(T, \frac {\epsilon} {2DC_1(N)},\frac {\epsilon} {2DC_2(N)}\right)$.
Then for $t \in [0,\tau]$ we have 
\begin{eqnarray*}
|v^2(t) M_t' - v^2(0)| &\le& \frac {\epsilon}{2D},\\
|u^2(t,\zeta(t)) M_t'(\zeta) - u^2(0,\zeta)| &\le& \frac {\epsilon}{2D}
\end{eqnarray*}
 and so, by the change of variables formula in Proposition \ref{Prop_ChangeOfV}, and denoting $S_{D,T,N}(t)$ the thick pushforward of $S_{D,T,N}$ (see Definition \ref{Def_Thick}), we obtain for $t \in [0,\tau]$
\begin{eqnarray*}
\int_{S_{D,T,N}(t)} u_x^2(t,x)dx &=& \int_{S_{D,T,N}} v^2(t)M_t'(\zeta) d\zeta\\ &\ge& \int_{S_{D,T,N}} \left(v^2(0) - \frac{\epsilon}{2D}\right) d\zeta\\ &\ge& \int_{S_{D,T,N}}u_x^2(0,\zeta)d\zeta - \epsilon, \\
\int_{S_{D,T,N}(t)} u^2(t,x)dx &=& \int_{S_{D,T,N}} u^2(t,\zeta(t))M_t'(\zeta) d\zeta\\ &\ge& \int_{S_{D,T,N}} \left(u^2(0,\zeta) - \frac{\epsilon}{2D}\right) d\zeta\\ &\ge& \int_{S_{D,T,N}}u^2(0,\zeta)d\zeta - \epsilon. \\
\end{eqnarray*}
Hence,
\begin{eqnarray*}
\int_{S_{D,T,N}(t)} (u_x^2(t,x)+u^2(t,x))dx &\ge& \int_{S_{D,T,N}} (u_x^2(0,\zeta)+u^2(0,\zeta))d\zeta - 2 \epsilon\\ &\ge& \int_{\mathbb{R}} (u_x^2(0,x)+u^2(0,x))dx - 5\epsilon.
\end{eqnarray*}

This means that for $t \in [0,\tau]$
\begin{equation*}
\int_{B(t)}u_x^2(t,x)dx \le 5\epsilon
\end{equation*}
due to the weak energy condition from Definition \ref{Def_dissipative} and the fact that $B(t) \cap S_{D,T,N}(t) = \emptyset$, which in turn is due to the uniqueness forwards and backwards of characteristics emanating from $S_{D,T,N}$. Due to arbitrariness of $\epsilon$ we conclude.
\end{proof}

\begin{lemma}
\label{Lem_L1Bt}
Let $u$ be a dissipative weak solution of \eqref{eq_WeakCH1}-\eqref{eq_WeakCH3}.
Let $B \subset \mathbb{R} \backslash \bigcup_{T>0}S_T$ be a bounded Borel set. Then 
\begin{equation}
\label{eq_limL1Bt}
\lim_{t \to 0^+} \mathcal{L}^1(B(t)) = 0,
\end{equation}
where $\mathcal{L}^1$ is the one-dimensional Lebesgue measure and $B(t)$ is the thick pushforward, see Definition \ref{Def_Thick}. 

\end{lemma}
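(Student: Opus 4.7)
The plan is to adapt the strategy of Lemma \ref{Lem_limux2}, tracking this time the Lebesgue measure of $B(t)$ rather than the energy carried by $B(t)$. Note first that $B$ itself has Lebesgue measure zero, since Proposition \ref{Prop_MainCH} asserts $\mathcal{L}^1(\mathbb{R} \setminus \bigcup_{T>0} S_T) = 0$; the content of the lemma is thus that the thick pushforward along characteristics of this particular null set does not blow up as $t \to 0^+$.

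Fix $\epsilon>0$ and choose $D>0$ with $B \subset [-D,D]$. I would work on the slightly enlarged interval $I := [-D-1, D+1]$ and exhibit, exactly as in the proof of Lemma \ref{Lem_limux2}, a set $S_{D,T,N} := I \cap S_T \cap L_T^{unique,N}$ of Lebesgue measure at least $2(D+1) - \epsilon$, by selecting $T$ sufficiently small and $N$ sufficiently large. Applying the change of variables formula in Proposition \ref{Prop_ChangeOfV} with $g \equiv 1$ and using $|v(s)| \le N$ on $L_T^{unique,N}$, I would obtain
\[
\mathcal{L}^1(S_{D,T,N}(t)) \;=\; \int_{S_{D,T,N}} e^{\int_0^t v(s)\,ds}\,d\zeta \;\ge\; e^{-tN}\,\mathcal{L}^1(S_{D,T,N}),
\]
which exceeds $2(D+1) - 2\epsilon$ once $t$ is sufficiently small. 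Writing $C := \|u\|_{L^\infty([0,\infty)\times \mathbb{R})}$, which is finite since $u \in L^\infty([0,\infty), H^1(\mathbb{R}))$, the bound $|\dot\zeta(s)| = |u(s,\zeta(s))| \le C$ along any characteristic confines the thick pushforward $I(t)$ to the interval $[-D-1-Ct, D+1+Ct]$, giving $\mathcal{L}^1(I(t)) \le 2(D+1) + 2Ct$.

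The proof then concludes from two observations. First, by uniqueness backwards of characteristics from $S_T$ on $[0,T]$ (provided by Proposition \ref{Prop_MainCH}) combined with $B \cap S_T = \emptyset$, no characteristic emanating from $B$ can land in $S_{D,T,N}(t)$, hence $B(t) \cap S_{D,T,N}(t) = \emptyset$. Second, $B \subset I$ forces $B(t) \subset I(t)$. Combining these two yields
\[
\mathcal{L}^1(B(t)) \;\le\; \mathcal{L}^1(I(t)) - \mathcal{L}^1(S_{D,T,N}(t)) \;\le\; 2Ct + 2\epsilon,
\]
which is below $3\epsilon$ once $t \le \epsilon/C$. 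Arbitrariness of $\epsilon$ then gives \eqref{eq_limL1Bt}.

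The main obstacle I anticipate is the clean justification of the disjointness $B(t) \cap S_{D,T,N}(t) = \emptyset$: even though $B$ is a set of nonuniqueness, from which many branching characteristics may emanate, \emph{every} such characteristic must avoid $S_{D,T,N}(t)$, because any point of $S_{D,T,N}(t) = M_t(S_{D,T,N})$ admits only one characteristic arriving at it at time $t$ (namely the one issuing from the corresponding point of $S_{D,T,N} \subset S_T$), and that characteristic does not originate in $B$. Once this is secured, the remainder of the argument is a quantitative packaging of the intuition that the "good" set $S_T$ fills almost all of the available space, so $B(t)$ has no room to grow.
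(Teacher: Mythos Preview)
Your proof is correct and follows essentially the same approach as the paper's: both arguments select a large-measure set $S_{D,T,N}$ inside a bounded interval, use Proposition \ref{Prop_ChangeOfV} to show its pushforward stays large, bound the enclosing interval's pushforward via finite propagation speed, and use backward uniqueness of characteristics from $S_T$ to conclude $B(t)$ is squeezed into the complement. The only differences are cosmetic (the paper works directly on $[-D,D]$ and the interval $[(-D)^l(t),D^r(t)]$ rather than enlarging to $[-D-1,D+1]$), and your justification of the disjointness $B(t)\cap S_{D,T,N}(t)=\emptyset$ via backward uniqueness is exactly the point the paper uses.
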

\begin{proof}
Let $D>0$ be so large that $B \subset [-D,D]$. 
To prove \eqref{eq_limL1Bt} we first observe that $$B(t) \subset [(-D)^l(t),D^r(t)] \backslash S_t(t),$$ where $(-{D})^l(\cdot)$ and ${D}^r(\cdot)$ are the leftmost and rightmost characteristics emananating from $-D$ and $D$, respectively, and $S_t(t)$ is the thick pushforward, see Definition \ref{Def_Thick}. Consequently, $B(t)$ is bounded for $t>0$. Since $\mathcal{L}^1(\mathbb{R} \backslash\bigcup_{T>0}S_T)=0$ we have that for every $\epsilon>0$ there exists $T=T({\epsilon})>0$ such that 
$$\mathcal{L}^1([-{D},{D}]\backslash S_{T}) < \epsilon/8.$$
Recalling that $S_T = L_T^{unique} \backslash Z_T,$ where $Z_T$ has Lebesgue measure $0$ we can furthermore find $N>0$ so large that, denoting
$$S_{D,T,N}:= S_T \cap L_T^{unique,N} \cap [-{D},{D}],$$
we have
$$\mathcal{L}^1([-{D},{D}]\backslash S_{D,T,N}) < \epsilon/4$$
and hence $$\mathcal{L}^1(S_{D,T,N})>2D - \epsilon/4.$$
Finally, let us take any 
$$t \le \max \left(T,\frac \epsilon {4\sup(|u|)}, -\frac 1 N \log\left[1-\frac {\epsilon}{4\mathcal{L}^1(S_{D,T,N})}\right]\right).$$ 
Using Proposition \ref{Prop_ChangeOfV} we have
\begin{equation*}
\mathcal{L}^1(S_{D,T,N}(t)) = \int_{S_{D,T,N}} e^{\int_0^t u_x(s,\zeta(s))ds} d\zeta \ge \mathcal{L}^1(S_{D,T,N}) e^{-Nt} \ge \mathcal{L}^1(S_{D,T,N}) - \frac \epsilon 4.
\end{equation*}
Moreover, due to the $\sup(|u|)$ bound on the propagation speed of characteristics,
\begin{equation*}
\mathcal{L}^1([-D^l(t),{D}^r(t)])\le 2{D} + \frac \epsilon 2.
\end{equation*} 
 On the other hand, 
\begin{equation*}
B(t) \subset [(-{D})^l(t),{D}^r(t)] \backslash S_{D,T,N}(t).
\end{equation*}
Hence, 
\begin{eqnarray*}
\mathcal{L}^1(B(t)) &\le& \mathcal{L}^1([-{D}^l(t),{D}^r(t)] \backslash S_{D,T,N}(t))\\
&=& \mathcal{L}^1([-D^l(t),D^r(t)]) - \mathcal{L}^1(S_{D,T,N}(t)) \\
 &\le& 2{D} + \frac \epsilon 2 - \left(\mathcal{L}^1(S_{D,T,N}) - \frac \epsilon 4\right) \\ &=& 2{D} + \frac 3 4 \epsilon - \mathcal{L}^1(S_{D,T,N})\\ &\le& 2{D} + \frac {3}{4}\epsilon  - \left(2{D} - \frac \epsilon 4\right) = \epsilon.
\end{eqnarray*}
Since $\epsilon$ can be chosen arbitrarily small we conclude.
\end{proof}

\section{Backward solutions and characteristics}
\label{Sec_backward}
In this section we discuss the notion of backward solutions and backward characteristics as well as  their relation to the forward characteristics.
\begin{definition}
Let $T>0$ and let $u$ be a weak solution of \eqref{eq_WeakCH1}-\eqref{eq_WeakCH3}. The function 
\begin{equation*}
u^{Tb}(t,x):= -u(T-t,x)
\end{equation*}
is called a \emph{backward solution} of \eqref{eq_WeakCH1}-\eqref{eq_WeakCH3}.
\end{definition}
\begin{proposition}
If $u$ is a weak solution of \eqref{eq_WeakCH1}-\eqref{eq_WeakCH3} then the backward solution $u^{Tb}$ is a weak solution of \eqref{eq_WeakCH1}-\eqref{eq_WeakCH3} on $[0,T] \times \mathbb{R}$ with initial condition $u^{Tb}(0,x)=-u(T,x)$. 
\end{proposition}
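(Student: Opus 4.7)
The plan is to verify the three conditions of Definition~\ref{Def_Weak} for $u^{Tb}$ on $[0,T] \times \mathbb{R}$. The structural fact underpinning the argument is that the weak Camassa--Holm system \eqref{eq_WeakCH1}--\eqref{eq_WeakCH3} is invariant under the transformation $(t, u) \mapsto (T-t, -u)$: the sign flip from differentiating $-u$ in $t$ is cancelled by the chain-rule sign from reversing time, and the nonlocal term $P$ depends quadratically on $u$ and $u_x$, hence is insensitive to the sign change $u \mapsto -u$.

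Regularity and the initial condition are immediate: since $u \in C([0,\infty)\times\mathbb{R}) \cap L^\infty([0,\infty), H^1(\mathbb{R}))$, the map $(t,x) \mapsto -u(T-t, x)$ is continuous on $[0,T]\times\mathbb{R}$ with $\|u^{Tb}(t,\cdot)\|_{H^1(\mathbb{R})} = \|u(T-t,\cdot)\|_{H^1(\mathbb{R})}$ essentially bounded on $[0,T]$, and $u^{Tb}(0, x) = -u(T, x)$ by definition. For the pressure, write $P^{Tb}$ for the convolution associated to $u^{Tb}$. Since $(u^{Tb})^2(t,x) = u^2(T-t,x)$ and $(u^{Tb}_x)^2(t,x) = u_x^2(T-t,x)$, we obtain $P^{Tb}(t,x) = P(T-t, x)$ and $P^{Tb}_x(t,x) = P_x(T-t, x)$.

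To establish the distributional identity, I would take $\psi \in C_c^\infty((0,T) \times \mathbb{R})$ and set $\tilde\psi(s, x) := \psi(T-s, x)$, which is again compactly supported in $(0,T) \times \mathbb{R}$. Then $\tilde\psi_s(s,x) = -\psi_t(T-s, x)$ and $\tilde\psi_x(s,x) = \psi_x(T-s, x)$. Substituting $s = T-t$ in
\begin{equation*}
\int_0^T\!\!\int_\mathbb{R}\Bigl(u^{Tb}\psi_t + \tfrac{(u^{Tb})^2}{2}\psi_x - P^{Tb}_x \psi\Bigr)\,dx\,dt,
\end{equation*}
the two sign flips coming from $u^{Tb} = -u$ and $\psi_t = -\tilde\psi_s$ cancel one another, while the Jacobian $|ds/dt|=1$ absorbs the reversed limits of integration, so the integral becomes $\int_0^T\!\!\int_\mathbb{R}\bigl(u\,\tilde\psi_s + \tfrac{u^2}{2}\tilde\psi_x - P_x \tilde\psi\bigr)\,dx\,ds$. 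This last expression vanishes because $u$ is a weak solution of \eqref{eq_WeakCH1}--\eqref{eq_WeakCH3} on $[0,\infty)$ and $\tilde\psi$ is an admissible test function. There is no substantive obstacle; the only care required is the bookkeeping of the two sign flips and the verification that $P$ is preserved under the transformation, both of which are ultimately consequences of the quadratic nature of the energy density defining $P$.
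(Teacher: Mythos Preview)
Your proof is correct and follows exactly the approach indicated in the paper, namely direct verification of Definition~\ref{Def_Weak}; you have simply spelled out the details of the sign bookkeeping and the invariance of $P$ under $u\mapsto -u$ that the paper leaves implicit.
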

\begin{proof}
Direct verification of Definition \ref{Def_Weak} 
\end{proof}
\begin{remark}
Dissipativity of $u$ does not imply dissipativity of $u^{Tb}$.
\end{remark}
\begin{lemma}
\label{Lem_corresp}
If $\gamma(\cdot)$ is a characteristic of $u^{Tb}$ satisfying 
\begin{itemize}
\item $\gamma(0) = \gamma$, 
\item $\frac d {dt}\gamma(t) = u^{Tb}(t,\gamma(t))$, 
\item $\frac d {dt} u^{Tb}(t,\gamma(t)) = -P_x^{Tb}(t,\gamma(t))$ 
\item $\gamma(T)=\zeta$
\end{itemize}
  then $\zeta(t):= \gamma(T-t)$ for $t \in [0,T]$ is a characteristic of $u$ such that $\gamma(\cdot)$ and $\zeta(\cdot)$ correspond to the same planar curve. Here, $P^{Tb} = \frac 1 2 e^{-|x|}* \left((u^{Tb})^2(t,\cdot) + \frac {(u^{Tb}_x)^2(t,\cdot)}{2}\right)$ is the functional $P$ computed in terms of $u^{Tb}$.

\end{lemma}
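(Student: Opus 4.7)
The plan is to verify Definition/Proposition \ref{Prop_Daf}-type conditions for $\zeta(t) := \gamma(T-t)$ by a direct chain-rule computation, once I have established the crucial symmetry relation
\[
P^{Tb}(s,x) \;=\; P(T-s,x), \qquad P_x^{Tb}(s,x) \;=\; P_x(T-s,x).
\]
This relation is the real content of the lemma: it holds because $P$ is built from $u^2$ and $u_x^2$, which are both invariant under the sign flip $u \mapsto -u$ that defines $u^{Tb}$. Concretely, $(u^{Tb})^2(s,x) = u^2(T-s,x)$ and, since $u^{Tb}_x(s,x) = -u_x(T-s,x)$, also $(u^{Tb}_x)^2(s,x) = u_x^2(T-s,x)$; plugging into the defining convolution formula for $P^{Tb}$ gives the identity above, and differentiation in $x$ gives the second one.

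Once this is in hand, I would verify the two characteristic ODEs for $\zeta$ on $[0,T]$. First,
\[
\tfrac{d}{dt}\zeta(t) \;=\; -\dot\gamma(T-t) \;=\; -u^{Tb}(T-t,\gamma(T-t)) \;=\; u(t,\zeta(t)),
\]
using the definition of $u^{Tb}$ at time $T-(T-t) = t$. Second, for the evolution of $u$ along $\zeta$, I write $u(t,\zeta(t)) = -u^{Tb}(T-t,\gamma(T-t))$ and differentiate:
\[
\tfrac{d}{dt}u(t,\zeta(t)) \;=\; \tfrac{d}{ds}\bigl[u^{Tb}(s,\gamma(s))\bigr]_{s=T-t} \;=\; -P_x^{Tb}(T-t,\gamma(T-t)) \;=\; -P_x(t,\zeta(t)),
\]
where the middle equality is the characteristic ODE for $\gamma$ and the last equality is the symmetry relation above. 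The endpoint condition $\zeta(0) = \gamma(T) = \zeta$ is automatic, and $\gamma, \zeta$ trace the same planar curve by construction since $\zeta$ is a time-reversal reparametrization of $\gamma$.

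I do not expect any real obstacle: the only subtlety is to be careful that $P$ is sign-invariant in $u$ (which is what makes the backward solution well-defined as a weak CH solution in the first place, matching the preceding Proposition), so that the two convolution operators $P$ and $P^{Tb}$ really do agree under the time reversal.
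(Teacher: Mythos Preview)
Your proposal is correct and follows essentially the same approach as the paper: a direct chain-rule verification of the two characteristic ODEs for $\zeta(t)=\gamma(T-t)$, using that $u^{Tb}(T-t,\cdot)=-u(t,\cdot)$ and $P_x^{Tb}(T-t,\cdot)=P_x(t,\cdot)$. The only difference is that you make the symmetry relation $P^{Tb}(s,x)=P(T-s,x)$ explicit (via the sign-invariance of $u^2$ and $u_x^2$), whereas the paper uses it tacitly in the final equality of its computation.
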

\begin{proof}
Indeed, $$\frac d {dt} \zeta(t) = -\gamma'(T-t) = -u^{Tb}(T-t,\gamma(T-t)) = -u^{Tb}(T-t,\zeta(t)) = u(t,\zeta(t))$$ and, similarly,
\begin{eqnarray*}
\frac {d}{dt} u(t,\zeta(t)) &=& \frac {d}{dt} u(t,\gamma(T-t)) = -\frac {d}{dt} u^{Tb}(T-t,\gamma(T-t))\\ &=& (u^{Tb}(\cdot,\gamma(\cdot)))' ({T-t})= -P_x^{Tb}(T-t,\gamma(T-t)) = -P_x(t,\zeta(t)).
\end{eqnarray*}
\end{proof}
Lemma \ref{Lem_corresp} implies that all characteristic curves on $[0,T] \times \mathbb{R}$ 
of $u^{Tb}$ correspond to characteristic curves of $u$ and vice-versa (note that $(u^{Tb})^{Tb} = u$ on $[0,T] \times \mathbb{R}$). Furthermore, observe that if $u$ is a dissipative solution satisfying $$u_x(t,x) \le const(1 + 1/t)$$ then 
\begin{equation}
\label{eq_TbDissip}
u^{Tb}_x(t,x) \ge -const(1 + 1/(T-t))
\end{equation}
for $0 \le t \le T$. 
Next, define 
\begin{equation}
\label{eq_omegagammakappa}
\omega^{\gamma,\kappa}(s):= \frac{u^{Tb}(s,\kappa(s))-u^{Tb}(s,\gamma(s))}{\kappa(s)-\gamma(s)},
\end{equation}
where $\gamma,\kappa \in \mathbb{R}$ and $\gamma(\cdot)$, $\kappa(\cdot)$ are certain characteristics of $u^{Tb}$ satisfying $\gamma(0)=\gamma$ and $\kappa(0)=\kappa$, respectively.
Finally, by $L_{Tb}^{unique,N}$ we will denote the set $L_T^{unique,N}$ corresponding to the backward solution $u^{Tb}$ and $L_{(T-1/K),Tb}^{unique,N}$ the set $L_{(T-1/K)}^{unique,N}$ corresponding to the backward solution $u^{Tb}$. More precisely, following Definition \ref{Def_Lt} we have the following definitions.

\begin{definition}
\label{def_LTKTb}
\begin{eqnarray}
L_{Tb}^{unique,N}&:=& \{\gamma \in \mathbb{R}: \gamma(\cdot) \mbox{ is unique forwards on $[0,T]$, }\nonumber \\ && \gamma(s) \mbox{ is a Lebesgue point of } u^{Tb}_x(s,\cdot) \mbox{ for almost every } s \in [0,T] \mbox{ and }\nonumber\\ &&\forall_{\kappa \in (\gamma- \frac 1 N, \gamma) \cup (\gamma,\gamma+ \frac 1 N),  s \in [0,T]} -N \le \omega^{\gamma,\kappa}(s) \le N \}, \nonumber \\ 
\label{eq_LTKTb}
L_{(T-1/K),Tb}^{unique,N}&:=& \{\gamma \in \mathbb{R}: \gamma(\cdot) \mbox{ is unique forwards on $[0,T-1/K]$, } \\ && \gamma(s) \mbox{ is a Lebesgue point of } u^{Tb}_x(s,\cdot) \mbox{ for almost every } s \in [0,T-1/K] \mbox{ and }\nonumber \\ &&\forall_{\kappa \in (\gamma- \frac 1 N, \gamma) \cup (\gamma,\gamma+ \frac 1 N),  s \in [0,T-1/K]} -N \le \omega^{\gamma,\kappa}(s) \le N \}, \nonumber\\
L_{Tb}^{unique} &:=& \bigcup_{N=1}^\infty L_{Tb}^{unique,N}, \nonumber\\
L_{(T-1/K),Tb}^{unique} &:=& \bigcup_{N=1}^\infty L_{(T-1/K),Tb}^{unique,N}. \nonumber
\end{eqnarray}
\end{definition}

\section{Representation of dissipative solutions in terms of characteristics}
\label{Sec_crucial}

In this section we prove the following theorem on dissipative solutions of \eqref{eq_WeakCH1}-\eqref{eq_WeakCH3}, which is the main technical result of this paper. It states that characteristics, along which $u_x$ evolves, fill up, up to a set of measure $0$, the whole strip $[0,T] \times \mathbb{R}$ for $T$ small enough.
\begin{theorem}
\label{Th_STT}
Let $u$ be a dissipative weak solution of \eqref{eq_WeakCH1}-\eqref{eq_WeakCH3}. 
Suppose that the sets $S_T$ in Proposition \ref{Prop_MainCH} have the form $S_T = L_T^{unique} \backslash Z_T$, where $Z_T$ has Lebesgue measure $0$. Then $S_T(T)$ is a set of full Lebesgue measure for every $T>0$ small enough, where $S_T(T) = \{\zeta(T): \zeta \in S_T \mbox{ and }\zeta(\cdot) \mbox{ is the unique characteristic satisfying } \zeta(0)=\zeta \}.$
\end{theorem}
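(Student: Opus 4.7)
The plan is to exploit the backward solution $u^{Tb}$ from Section \ref{Sec_backward}, leveraging the fact that the general characteristic framework of Proposition \ref{Prop_MainCH} applies to any weak solution, not just dissipative ones. As a first reduction, since $S_T = L_T^{unique}\setminus Z_T$ with $Z_T$ a null subset of $L_T^{unique}$, Remark \ref{Rem_0to0} gives $\mathcal{L}^1(M_T(Z_T))=0$, so $S_T(T)=M_T(S_T)$ agrees with $M_T(L_T^{unique})$ up to a null set and it suffices to prove $\mathcal{L}^1(\mathbb{R}\setminus M_T(L_T^{unique}))=0$ for $T$ small enough.

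I would then apply Proposition \ref{Prop_MainCH} to $u^{Tb}$, obtaining, for each $K\ge 1$, a set $S^{Tb}_{T-1/K}$ from which forward characteristics of $u^{Tb}$ are unique on $[0,T-1/K]$, with $\bigcup_{K\ge 1} S^{Tb}_{T-1/K}$ of full Lebesgue measure. By Lemma \ref{Lem_corresp}, this means that for almost every $x\in\mathbb{R}$ there is some $K=K(x)$ such that the backward characteristic of $u$ emanating from $(T,x)$ is \emph{unique} on the interval $[1/K,T]$, arriving at $u$-time $1/K$ at some intermediate point $y^x:=\xi^x(1/K)$. This confines the possible non-uniqueness of characteristics to the small initial slab $[0,1/K]$, which we then need to traverse back to time $0$.

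The next step is to identify, for almost every such $x$, a genuine starting point $\zeta\in L_T^{unique}$ whose forward characteristic coincides with $\xi^x$ on $[1/K,T]$. Here I would invoke Lemma \ref{Lem_L1Bt} applied to the forward dissipative solution $u$ with $B$ a bounded subset of $\mathbb{R}\setminus\bigcup_{T'>0} S_{T'}$: since $B$ is null and the thick forward image $B(t)$ has vanishing Lebesgue measure as $t\to 0^+$, almost every intermediate point $y^x$ lies in the forward image at time $1/K$ of some $\zeta\in\bigcup_{T'>0} S_{T'}\subset\bigcup_{T'>0} L_{T'}^{unique}$. Using the monotonicity $L_{T_1}^{unique}\supset L_{T_2}^{unique}$ for $T_1<T_2$ (which follows directly from Definition \ref{Def_Lt}), picking $T$ small enough and $K$ large enough one arranges $\zeta\in L_T^{unique}$ and then glues the backward trajectory on $[1/K,T]$ with the forward one on $[0,1/K]$ to exhibit $x=\zeta(T)\in M_T(L_T^{unique})$.

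The main obstacle I expect is precisely the gluing step: guaranteeing that the $\zeta$ produced at time $0$ genuinely lies in $L_T^{unique}$ (and not merely in some $L_{T'}^{unique}$ with $T'<T$) and that the forward characteristic from $\zeta$ actually reproduces $\xi^x$ on $[1/K,T]$, rather than splitting off along a different branch at the joining time $1/K$. This is the point at which the dissipativity of $u$ must be used quantitatively, through both the Oleinik-type one-sided bound and the weak energy condition of Definition \ref{Def_dissipative}: Lemma \ref{Lem_limux2} controls the $u_x^2$ contribution that might be lost across the slab $[0,1/K]$, while Lemma \ref{Lem_L1Bt} controls the one-dimensional measure of the candidate bad set, and together they force, in the limit $K\to\infty$, the exceptional set at time $T$ to have Lebesgue measure zero.
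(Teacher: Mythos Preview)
Your overall instinct---work with the backward solution $u^{Tb}$ and use Proposition \ref{Prop_MainCH} together with Lemmas \ref{Lem_limux2} and \ref{Lem_L1Bt}---matches the paper's strategy. However, the ``gluing'' mechanism you propose has a genuine gap and does not correspond to what actually makes the argument go through.

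The central issue is that producing, for a given $x$, some $\zeta$ and a characteristic $\zeta(\cdot)$ with $\zeta(T)=x$ is \emph{not} the same as showing $x\in S_T(T)$. You need $\zeta\in L_T^{unique}$, which by Definition \ref{Def_Lt} requires the two-sided bound $-N\le\omega(s)\le N$ for all nearby $\eta$ and all $s\in[0,T]$. Your gluing at time $1/K$ would at best give such bounds on $[1/K,T]$ (from the backward solution) and on $[0,T']$ for some possibly small $T'$ (from the forward side), with no mechanism to concatenate these into a bound on all of $[0,T]$. The claim that one can simply ``pick $T$ small enough and $K$ large enough'' does not help: $T$ is fixed from the outset, and for each $x$ the parameter $K(x)$ is determined by the backward framework, not by you. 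Furthermore, your gluing tacitly assumes forward uniqueness of characteristics from the intermediate point $(1/K,y^x)$, but this is Lemma \ref{Lem_charunique}, which is proved \emph{using} Theorem \ref{Th_STT} and is therefore unavailable here.

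The paper avoids gluing entirely. It splits $\mathbb{R}$ (viewed as starting points at $u$-time $T$) into a set $\Gamma$ where the backward difference quotients $\omega^{\gamma,\kappa}$ are bounded from below on the \emph{full} interval $[0,T]$, and its complement. For $\gamma\in\Gamma$ one first upgrades the one-sided bound to a two-sided bound (this is the nontrivial Proposition \ref{Prop57new}, a Vitali-covering argument controlling the ``bad'' set where $\omega^{\gamma,\kappa}$ becomes large); then a direct comparison $\omega(t)=-\omega^{\gamma,\kappa}(T-t)$ shows $\gamma(T)\in L_T^{unique,\tilde N}$ for suitable $\tilde N$, so $\gamma\in S_T(T)$ up to null sets. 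For $\gamma\notin\Gamma$ the argument is by contradiction, not by gluing: one shows that the characteristic of $u^{Tb}$ from such $\gamma$, followed to backward-time $T$ (forward-time $0$), must land outside $\bigcup_R S_R$, hence in a null set $B$. Lemmas \ref{Lem_limux2} and \ref{Lem_L1Bt} then force $\mathcal{L}^1(\tilde A(T-t))\to 0$ and $\int_{\tilde A(T-t)}(u_x^{Tb})^2\to 0$ as $t\to 0^+$; but a separate ODE argument shows that on a subset $\hat A$ of positive measure $u_x^{Tb}$ drops below $-M$ and $\int_{\hat A(s)}(u_x^{Tb})^2$ stays bounded below, yielding the contradiction. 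This last step is where the dissipativity is used quantitatively, and it has no counterpart in your sketch.
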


To begin the proof, let us fix $T>0$ which satisfies $T<T_{max}$, where 
\begin{equation}
\label{eq_defTmax}
T_{max}:=\frac {\pi}{8 \sqrt{LC}},
\end{equation}
$L$ is a fixed constant, which can be chosen $1$ for the Camassa-Holm equation, see Lemma \ref{LemA45} below, and 
\begin{equation}
\label{eq_defC}
C:=\sup_{t \in [0,\infty)} \int_{\mathbb{R}}\left[u^2(t,y)+ \frac 1 2 u_x^2(t,y)\right]dy.
\end{equation}
 For $u$ -- dissipative, $C$ can be bounded, by the weak energy condition in Definition \ref{Def_dissipative}, in terms of the initial condition:
\begin{equation*}
C \le \sup_{t \in [0,\infty)} \int_{\mathbb{R}}[u^2(t,y)+ u_x^2(t,y)]dy = \int_{\mathbb{R}}[u^2(0,y)+ u_x^2(0,y)]dy.
\end{equation*}

We split the remaining part of the proof of Theorem \ref{Th_STT} into two parts:
\begin{itemize}
\item In Part 1 we consider the set $\Gamma \ni \gamma$ of all $\gamma$ such that $u^{Tb}$ has, along characteristics emanating from $\gamma$, difference quotients $\omega^{\gamma,\kappa}$ (defined by \eqref{eq_omegagammakappa}) bounded from below. We show that for almost all $\gamma \in \Gamma$ we have $\gamma(T) \in S_T$.
\item In Part 2 we consider all the remaining starting points of characteristics of $u^{Tb}$, i.e. the set $\mathbb{R} \backslash \Gamma$, and show that this set has Lebesgue measure null.
\end{itemize}

\noindent {\bf Part 1.}

Define the following sets 
\begin{eqnarray*}
\Gamma^K &:=& \{\gamma \in \mathbb{R}: u^{Tb}_x(0,\gamma)=\lim_{h\to 0} \frac{u^{Tb}(0,\gamma+h)-u^{Tb}(0,\gamma)}{h} \mbox{ and }\\&&\mbox{ for every characteristic } \gamma(\cdot) \mbox{ of } u^{Tb} \mbox{ with } \gamma(0)=\gamma \mbox{ and  }  \\
&&\mbox{ for every } \kappa \in (\gamma-1/K, \gamma) \cup (\gamma,\gamma + 1/K) \mbox{ and every characteristic } \kappa(\cdot) \mbox{ of } u^{Tb}\\
&&\mbox{ satisfying } \kappa(0)=\kappa \mbox{ and every } s \in [0,T] \mbox{ we have } \omega^{\gamma,\kappa}(s) \ge -K\},\\
\Gamma &:=& \bigcup_{K=1}^{\infty} \Gamma^K,
\end{eqnarray*}

\begin{proposition}
\label{Prop57new}
\begin{equation}
\label{eqLem57new}
\Gamma^K = \bigcup_{N \in \mathbb{N}} \Gamma^{K,unique,N} \cup \tilde{Z}_K,
\end{equation}
where $\tilde{Z}_K$ is a set of measure $0$ and
\begin{eqnarray*}
\Gamma^{K,unique,N} &:=& \{\gamma \in \mathbb{R}: u^{Tb}_x(0,\gamma)=\lim_{h\to 0} \frac{u^{Tb}(0,\gamma+h)-u^{Tb}(0,\gamma)}{h} \mbox{ and }\\&&\mbox{for every characteristic } \gamma(\cdot) \mbox{ of } u^{Tb} \mbox{ with } \gamma(0)=\gamma \mbox{ and every } \\ &&\kappa \in (\gamma-1/K, \gamma) \cup (\gamma,\gamma + 1/K) \mbox{ and every characteristic } \kappa(\cdot) \mbox{ of } u^{Tb}\\
&&\mbox{satisfying } \kappa(0)=\kappa \mbox{ and every } s \in [0,T] \mbox{ we have }  
 N \ge \omega^{\gamma,\kappa}(s) \ge -K\}. 
\end{eqnarray*}

\end{proposition}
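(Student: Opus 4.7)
The inclusion $\bigcup_{N \in \mathbb{N}} \Gamma^{K,unique,N} \subseteq \Gamma^K$ is immediate, since the two-sided bound $-K \le \omega^{\gamma,\kappa}(s) \le N$ in the definition of $\Gamma^{K,unique,N}$ entails in particular $\omega \ge -K$, and the Lebesgue-point condition at $s=0$ appears identically in both definitions. The plan is therefore to set
\[
\tilde Z_K := \Gamma^K \setminus \bigcup_{N \in \mathbb{N}} \Gamma^{K,unique,N}
\]
and show $\mathcal{L}^1(\tilde Z_K)=0$, i.e.\ that for almost every $\gamma \in \Gamma^K$ there exists a finite $N(\gamma)$ dominating $\omega^{\gamma,\kappa}(s)$ uniformly in $s \in [0,T]$, $\kappa \in (\gamma - 1/K, \gamma) \cup (\gamma, \gamma + 1/K)$, and in the choice of characteristics $\gamma(\cdot), \kappa(\cdot)$ of $u^{Tb}$.

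The starting tools are: (a)~the separation estimate $|\kappa(s)-\gamma(s)| \ge |\kappa - \gamma|\,e^{-KT}$, obtained by integrating $\tfrac{d}{ds}(\kappa(s) - \gamma(s)) = \omega^{\gamma,\kappa}(s)(\kappa(s) - \gamma(s))$ under the assumption $\omega \ge -K$; and (b)~the Lebesgue-point hypothesis, which forces $\omega^{\gamma,\kappa}(0) \to u^{Tb}_x(0,\gamma)$ as $\kappa \to \gamma$ and hence provides uniform control on $\omega(0)$ for $\kappa$ in some small neighborhood of $\gamma$. Combined with the $L^\infty$ bound on $u^{Tb}$, (a) alone yields the crude estimate $\omega^{\gamma,\kappa}(s) \le 2\|u^{Tb}\|_\infty e^{KT}/|\kappa - \gamma|$, which already settles the case where $\kappa$ stays bounded away from $\gamma$. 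To close the remaining gap as $\kappa \to \gamma$, I would differentiate $\omega$ along characteristics to obtain the Riccati identity $\dot\omega(s) = a(s) - \omega(s)^2$, where
\[
a(s) = \frac{P^{Tb}_x(s,\gamma(s)) - P^{Tb}_x(s,\kappa(s))}{\kappa(s) - \gamma(s)}
\]
is, up to sign, the average of $P^{Tb}_{xx} = P^{Tb} - (u^{Tb})^2 - (u^{Tb}_x)^2/2$ over $[\gamma(s),\kappa(s)]$. A standard barrier argument for the Riccati equation then gives the upper bound $\omega(s) \le \max\bigl(\omega(0),\sqrt{A(\gamma)}\bigr)$ whenever $a(\cdot)$ is dominated by a constant $A(\gamma)$ along the trajectory, reducing the matter to control of the average of $(u^{Tb}_x)^2$ along $[\gamma(s),\kappa(s)]$.

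The main obstacle is precisely this: since $(u^{Tb}_x(s,\cdot))^2$ only lies in $L^\infty([0,T]; L^1(\mathbb{R}))$ and is not pointwise bounded, its average over $[\gamma(s),\kappa(s)]$ can blow up where $(u^{Tb}_x)^2$ concentrates, and this blow-up is exactly what prevents a uniform $N$ from existing. The exceptional set $\tilde Z_K$ should be constructed to absorb those $\gamma$ whose characteristic $\gamma(s)$ meets such concentration points in a quantitatively bad way; its Lebesgue-null character is expected to follow from a Fubini argument combining the $L^1$ Hardy--Littlewood maximal estimate for $(u^{Tb}_x(s,\cdot))^2$ at each fixed $s$ with the integrability of this $L^1$-norm over $s \in [0,T]$, together with the fact that characteristics of $u^{Tb}$ emanating from a measurable set of $\gamma$'s sweep out a measurable subset of $[0,T]\times\mathbb{R}$ with controlled $\mathcal{L}^2$-measure via Proposition~\ref{Prop_ChangeOfV}. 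The hardest technical ingredient is the bookkeeping required to make this uniform over \emph{every} admissible characteristic pair $\gamma(\cdot), \kappa(\cdot)$ simultaneously, not just for one specific choice, which is the analogue in the backward-in-time direction of the issue already confronted in the forward case in \cite{CHGJ}.
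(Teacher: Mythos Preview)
Your approach is genuinely different from the paper's, and the gap you flag is more serious than you suggest. The Riccati identity $\dot\omega = a - \omega^2$ and the barrier $\omega(s)\le\max(\omega(0),\sqrt{A})$ are correct, but your proposed control of $a$ via the Hardy--Littlewood maximal function and Fubini does not close. What you need is that for almost every $\gamma$ there is a finite bound on $a^{\gamma,\kappa}(s)$ holding \emph{simultaneously for all} $s\in[0,T]$ and all admissible $\kappa$, $\kappa(\cdot)$. The weak-type maximal inequality at each fixed $s$, pushed through the characteristic map and integrated in $s$, yields only that for a.e.\ $\gamma$ the quantity $M\bigl((u^{Tb}_x(s,\cdot))^2\bigr)(\gamma(s))$ is finite for a.e.\ $s$; it says nothing about the essential supremum over $s$, which is what the barrier requires. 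Passing to the time-integrated version $\omega(s)\le\omega(0)+\int_0^s a^+$ does not help either, because you then need $\sup_{\kappa}\int_0^T a^{\gamma,\kappa,+}(s)\,ds<\infty$, and the sup over $\kappa$ prevents you from using Fubini directly. There is also a circularity: to relate the average over $[\gamma(s),\kappa(s)]$ to a maximal function centred at $\gamma(s)$ with controlled radius you would want an upper bound on $|\kappa(s)-\gamma(s)|$, which is exactly what you are trying to prove.

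The paper avoids all of this by never attempting to bound $a$ from above. Instead it exploits the \emph{nonlocal} form $-P^{Tb}_x(x)=\int_{\mathbb{R}}A(x,y)\bigl[(u^{Tb})^2+\tfrac12(u^{Tb}_x)^2\bigr]dy$ with $A(x,y)=\tfrac12\,\mathrm{sgn}(x-y)e^{-|x-y|}$, together with the elementary one-sided Lipschitz bound $\frac{A(x_2,y)-A(x_1,y)}{x_2-x_1}\ge -L$, to obtain the global inequality $\dot\omega\ge-\omega^2-LC$ with $C=\sup_t\|u(t,\cdot)\|_{H^1}^2$ fixed. This \emph{lower} bound on $\dot\omega$ gives persistence: if $\omega(s_i)>M$ for $M$ large, then $\omega(t)\ge\sqrt{LC}$ for all $t\in[s_i,T]$ (here the hypothesis $T<T_{max}$ is used). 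The bad set $J_T^{bad}$ where no upper bound exists is then covered, via Vitali, by disjoint intervals $[\gamma_i,\kappa_i]$ with $\omega_i(s_i)>M$; tracking $h_i\omega_i$ forward to time $T$ and applying Jensen's inequality yields $\int(u^{Tb}_x)^2(T,\cdot)\ge\text{const}\cdot M\cdot\mathcal{L}^1(J_T^{bad})$. Since the left side is finite, letting $M\to\infty$ gives $\mathcal{L}^1(J_T^{bad})=0$. The point is that the argument trades the hopeless task of bounding local averages of $(u^{Tb}_x)^2$ for a single global energy bound at the terminal time.
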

The proof of Proposition \ref{Prop57new} is a modification of the proof of \cite[Lemma 5.7]{CHGJ}. Since, however, Proposition \ref{Prop57new} does not follow directly from   \cite[Lemma 5.7]{CHGJ} due to different assumptions, we provide a proof here for the sake of completeness. Let us begin with two useful lemmas.
\begin{lemma}
\label{Lem_uniquebackforw}
If $\gamma \in \Gamma^K$ then $\gamma(\cdot)$ is unique backwards on $[0,T]$, i.e. $\gamma(\cdot)$ does not cross with any other characteristic on $[0,T]$. If, moreover, $\gamma \in \Gamma^{K,unique,N}$ then $\gamma(\cdot)$ is also unique forwards on $[0,T]$.
\end{lemma}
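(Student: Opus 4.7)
The key identity is this: for any two characteristics $\zeta_1(\cdot),\zeta_2(\cdot)$ of $u^{Tb}$, writing $\delta(s):=\zeta_2(s)-\zeta_1(s)$, on any subinterval where $\delta$ does not vanish one has $\dot\delta(s)=u^{Tb}(s,\zeta_2(s))-u^{Tb}(s,\zeta_1(s))=\omega^{\zeta_1,\zeta_2}(s)\,\delta(s)$, so that
\begin{equation*}
\delta(T)\;=\;\delta(s_0)\exp\!\left(\int_{s_0}^{T}\omega^{\zeta_1,\zeta_2}(r)\,dr\right).
\end{equation*}
Thus the $\Gamma^K$ lower bound $\omega\ge-K$ quantitatively prevents two characteristics from coalescing forward in time faster than at rate $e^{Ks}$, which is precisely the content of backward uniqueness; the additional $\Gamma^{K,unique,N}$ upper bound $\omega\le N$ will analogously yield forward uniqueness.

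For backward uniqueness, let $\eta(\cdot)$ be any characteristic of $u^{Tb}$ with $\eta(T)=\gamma(T)$. If $\eta(0)>\gamma$ I would pick $\kappa\in(\gamma,\gamma+1/K)\cap(\gamma,\eta(0))$ and use the standard non-crossing of characteristics of a common continuous velocity field (two of them cannot pass transversely through each other, since at any crossing point they would share the same velocity $u^{Tb}$) to obtain $\gamma(s)\le\kappa(s)\le\eta(s)$ on $[0,T]$. The identity applied to the admissible pair $(\gamma(\cdot),\kappa(\cdot))$ gives $\kappa(T)-\gamma(T)\ge(\kappa-\gamma)e^{-KT}>0$, contradicting $\eta(T)=\gamma(T)\ge\kappa(T)$; the case $\eta(0)<\gamma$ is symmetric, using $\kappa\in(\gamma-1/K,\gamma)$.

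The delicate remaining case is $\eta(0)=\gamma$ with $\eta\not\equiv\gamma$, because $\kappa=\gamma$ is \emph{excluded} from the punctured neighborhood in the definition of $\Gamma^K$. Here I plan to argue by approximation: on a maximal subinterval $(s_0,s_1)\subset[0,T]$ on which, say, $\eta>\gamma$, introduce auxiliary characteristics $\kappa_\epsilon(\cdot)$ starting at $\gamma+\epsilon$ and note that $\eta(\cdot)$ is itself a legitimate reference characteristic in the definition of $\Gamma^K$, since it starts at $\gamma$. Applying the bound twice yields both $\kappa_\epsilon(s)-\gamma(s)\ge\epsilon e^{-Ks}$ and $\kappa_\epsilon(s)-\eta(s)\ge\epsilon e^{-Ks}$ on $[0,T]$. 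A Kneser-type compactness argument on the characteristic ODE as $\epsilon\to 0^+$ produces a limit characteristic $\zeta_+(\cdot)$ emanating from $\gamma$ that dominates both $\gamma(\cdot)$ and $\eta(\cdot)$; combined with the ODE identity on the open set $\{\delta\ne 0\}$ and monotone non-crossing pinning the sign of $\delta$, this forces $\eta\equiv\gamma$ on $(s_0,s_1)$, yielding the contradiction.

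The forward-uniqueness conclusion under the stronger hypothesis $\gamma\in\Gamma^{K,unique,N}$ is softer, because the upper bound $\omega\le N$ is now available. For any characteristic $\eta(\cdot)$ starting at $\gamma$ and any $\kappa\in(\gamma,\gamma+1/K)$, the two-sided bounds $-K\le\omega\le N$ applied once to the pair $(\gamma(\cdot),\kappa(\cdot))$ and once to the pair $(\eta(\cdot),\kappa(\cdot))$ give
\begin{equation*}
(\kappa-\gamma)e^{-Ks}\le\kappa(s)-\gamma(s)\le(\kappa-\gamma)e^{Ns},\qquad(\kappa-\gamma)e^{-Ks}\le\kappa(s)-\eta(s)\le(\kappa-\gamma)e^{Ns},
\end{equation*}
so $|\eta(s)-\gamma(s)|\le(\kappa-\gamma)(e^{Ns}-e^{-Ks})$, and letting $\kappa\to\gamma^{+}$ forces $\eta\equiv\gamma$ on $[0,T]$. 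The main obstacle throughout is the $\eta(0)=\gamma$ case of backward uniqueness, since $\Gamma^K$ gives no direct bound on $\omega^{\gamma,\eta}$ itself and one must pass carefully to the limit of the approximating characteristics $\kappa_\epsilon$ via continuity of the characteristic ODE.
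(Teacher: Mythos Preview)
Your argument for the core cases is essentially the paper's: the Gronwall estimate $\kappa(t)-\gamma(t)\ge(\kappa-\gamma)e^{-Kt}$ handles backward uniqueness for nearby $\kappa$, a concatenation/non-crossing step extends this to distant $\kappa$, and the upper bound $\omega\le N$ together with a squeeze as $\kappa\to\gamma^{+}$ yields forward uniqueness. The paper argues forward uniqueness by bounding $\gamma^{r}(s_0)-\gamma^{l}(s_0)<\kappa(s_0)-\gamma^{l}(s_0)\le e^{Ns_0}(\kappa-\gamma)$; your two-sided sandwich is a harmless variant of the same idea.

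Where you diverge from the paper is the ``delicate case'' $\eta(0)=\gamma$ with $\eta\not\equiv\gamma$. Note the lemma's own gloss: ``i.e.\ $\gamma(\cdot)$ does not cross with any other characteristic on $[0,T]$.'' The paper interprets backward uniqueness here precisely as \emph{non-crossing with characteristics emanating from $\kappa\neq\gamma$}, and its proof treats only that case; this is also all that is used downstream (disjointness of the intervals $[\gamma_i(t),\kappa_i(t)]$ in the proof of Proposition~\ref{Prop57new}). So your extra case is not needed for the lemma as stated. Moreover, your sketch for it is not convincing: obtaining a limit characteristic $\zeta_+(\cdot)$ from $\gamma$ that dominates both $\gamma(\cdot)$ and $\eta(\cdot)$ does not by itself force $\eta\equiv\gamma$, and the appeal to ``monotone non-crossing pinning the sign of $\delta$'' is exactly the statement you are trying to prove. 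If you genuinely wanted to cover this case you would need a different mechanism (and in fact forward uniqueness from $\gamma$, which you only have under the stronger $\Gamma^{K,unique,N}$ hypothesis, is what controls branching of characteristics from $\gamma$). A minor related point: your invocation of ``standard non-crossing of characteristics of a common continuous velocity field'' is not literally valid for a merely continuous right-hand side---two solutions can touch and then separate in either order---so for $\eta(0)\ge\gamma+1/K$ it is cleaner to follow the paper's concatenation argument (intermediate value theorem plus gluing at the crossing time) rather than asserting $\kappa(s)\le\eta(s)$ directly.
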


\begin{proof}
Suppose that $\gamma \in \Gamma^K$ and 
take $\kappa \in (\gamma, \gamma + 1/K)$. Then  $$\frac {d}{dt}(\kappa(t)-{\gamma}(t)) = u^{Tb}(t,\kappa(t)) - u^{Tb}(t,\gamma(t))\ge -K(\kappa(t)-{\gamma}(t))$$
and hence $\kappa(t) - \gamma(t) \ge e^{-Kt} (\kappa-\gamma)$. Consequently, $\kappa(t) \neq \gamma(t)$ for $t \in [0,T]$. If $\kappa \ge \gamma + 1/K$ then any characteristic emanating from $\gamma + 1/K$ and crossing $\gamma(\cdot)$ on $[0,T]$ would have to cross a characteristic emanating form $\gamma + 1/2K$ for some $\tau \in [0,T]$. Concatenation of the two would produce a characteristic emanating from $\gamma + 1/2K$ which crosses $\gamma(\cdot)$ at time $\tau$, which is a contradiction. A similar reasoning can be conducted for $\kappa<\gamma$. This proves uniqueness backwards for $\gamma \in \Gamma^K$.
 
To prove uniqueness forwards, assume now that $\gamma \in \Gamma^{K,unique,N}$ and suppose that for some $s_0 \in [0,T]$ we have $\gamma^l(s_0)<\gamma^r(s_0)$, where $\gamma^l$ and $\gamma^r$ are the leftmost and rightmost characteristics of $u^{Tb}$, respectively. 
For $\kappa \in (\gamma,\gamma+1/K)$ we have $$\frac {d}{dt}(\kappa(t)-{\gamma}^l(t)) = u^{Tb}(t,\kappa(t)) - u^{Tb}(t,\gamma^l(t))\le N(\kappa(t)-{\gamma}^l(t)).$$ Consequently, using uniqueness backwards of $\gamma^r(\cdot)$, we obtain $0< \gamma^r(s_0) - \gamma^l(s_0)< \kappa(s_0)-\gamma^l(s_0) \le e^{s_0 N} (\kappa - \gamma)$. Passing to the limit $\kappa \to \gamma^+$ we obtain a contradiction.
\end{proof}
\begin{remark}
\label{Rem_44}
If $u$ is dissipative then for every $\gamma \in \mathbb{R}$ every characteristic $\gamma(\cdot)$ of $u^{Tb}$ is unique backwards on $[0,T-\epsilon]$ for any $\epsilon \in (0,T)$.  Indeed, using \eqref{eq_TbDissip} and reasoning as in Lemma \ref{Lem_uniquebackforw} we obtain 
\begin{eqnarray*}
\frac d {dt} (\kappa(t)-\gamma(t)) &=& u^{Tb}(t,\kappa(t)) - u^{Tb}(t,\gamma(t)) = \int_{\gamma(t)}^{\kappa(t)} u^{Tb}_x(t,y)dy \\
&\ge& -const(1+1/(T-t)) (\kappa(t)-\gamma(t)) \ge -const (1+1/\epsilon) (\kappa(t)-\gamma(t)) 
\end{eqnarray*}
which implies $\kappa(t) - \gamma(t) \ge (\kappa - \gamma)e^{-const (1+1/\epsilon)t}$.
Note that this reasoning does not exclude the collision of characteristics $\gamma(\cdot)$ and $\kappa(\cdot)$ at time $T$, which indeed does not occur either and will be proven later as a conclusion from Part 2 of the proof of Theorem \ref{Th_STT}, see Lemma \ref{Lem_charunique}.
\end{remark}
\begin{lemma}
\label{Lem_forwards}
For almost every $\gamma \in \Gamma$ the characteristic $\gamma(\cdot)$ is unique forwards. 
\end{lemma}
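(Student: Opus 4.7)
The plan is to combine Proposition \ref{Prop57new}, which decomposes $\Gamma^K$ up to a null set as $\bigcup_N \Gamma^{K,unique,N}$, with Lemma \ref{Lem_uniquebackforw}, whose second assertion guarantees forward uniqueness on $[0,T]$ for any characteristic starting in $\Gamma^{K,unique,N}$. Together these already carry the whole content of the lemma, so the argument is essentially a measure-theoretic countability bookkeeping; no new estimate on characteristics is required.

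Concretely, I would proceed as follows. First, fix $K \in \mathbb{N}$. Proposition \ref{Prop57new} gives $\Gamma^K = \bigcup_{N \in \mathbb{N}} \Gamma^{K,unique,N} \cup \tilde{Z}_K$ with $\mathcal{L}^1(\tilde{Z}_K) = 0$, so
\begin{equation*}
\Gamma^K \setminus \bigcup_{N \in \mathbb{N}} \Gamma^{K,unique,N} \subset \tilde{Z}_K,
\end{equation*}
and hence for $\mathcal{L}^1$-a.e.\ $\gamma \in \Gamma^K$ there exists $N = N(\gamma) \in \mathbb{N}$ with $\gamma \in \Gamma^{K,unique,N}$. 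Next, by the second conclusion of Lemma \ref{Lem_uniquebackforw}, every such $\gamma$ has the property that $\gamma(\cdot)$ is unique forwards on $[0,T]$. Finally, since $\Gamma = \bigcup_{K \in \mathbb{N}} \Gamma^K$, the set of $\gamma \in \Gamma$ for which forward uniqueness could fail is contained in the countable union $\bigcup_{K \in \mathbb{N}} \tilde{Z}_K$, which is Lebesgue null. This proves the claim.

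There is no serious obstacle in this lemma itself: all of the analytic work (the bounds on $\omega^{\gamma,\kappa}$ forcing the characteristics not to collide) has already been absorbed into Lemma \ref{Lem_uniquebackforw}, and the stratification of $\Gamma^K$ by the additional upper bound $\omega^{\gamma,\kappa} \le N$ has been carried out in Proposition \ref{Prop57new}. The only thing to watch is that the null exceptional sets $\tilde{Z}_K$ are indexed by the countable parameter $K$ so that their union remains negligible; this is automatic. Thus Lemma \ref{Lem_forwards} is best viewed as the clean measure-theoretic corollary that packages Proposition \ref{Prop57new} and Lemma \ref{Lem_uniquebackforw} for use in the subsequent steps of Part 1.
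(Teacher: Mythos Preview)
Your argument is circular. In the paper, Lemma \ref{Lem_forwards} is proved \emph{before} the proof of Proposition \ref{Prop57new} is given, precisely because the proof of Proposition \ref{Prop57new} invokes Lemma \ref{Lem_forwards} in its very first sentence: ``To begin, we can assume, by Lemma \ref{Lem_forwards}, that, without loss of generality, for every $\gamma \in \Gamma^K$ the characteristic $\gamma(\cdot)$ is not only unique backwards but also unique forwards.'' Hence Proposition \ref{Prop57new} is not available as an input when proving Lemma \ref{Lem_forwards}, and your chain Proposition \ref{Prop57new} $\Rightarrow$ Lemma \ref{Lem_uniquebackforw} (second part) $\Rightarrow$ Lemma \ref{Lem_forwards} closes a loop.

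The paper's actual proof is short and independent of Proposition \ref{Prop57new}. For each $\gamma \in \Gamma$ one looks at the two-dimensional region $\Xi^\gamma = \{(t,y): t \in [0,T],\ y \in [\gamma^l(t),\gamma^r(t)]\}$ between the leftmost and rightmost characteristics of $u^{Tb}$ from $\gamma$. If $\gamma(\cdot)$ fails to be unique forwards then $\Xi^\gamma$ has positive planar Lebesgue measure. The \emph{first} part of Lemma \ref{Lem_uniquebackforw} (backward uniqueness, which does not depend on Proposition \ref{Prop57new}) guarantees that $\Xi^{\gamma_1}$ and $\Xi^{\gamma_2}$ are disjoint for distinct $\gamma_1,\gamma_2 \in \Gamma$. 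Since only countably many pairwise disjoint sets can have positive measure, forward uniqueness fails for at most countably many $\gamma$, a fortiori for a null set.
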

\begin{proof}
Consider the sets $\Xi^\gamma = \{(t,y): t \in [0,T], y \in [\gamma^l(t),\gamma^r(t)]\}$. If $\gamma(\cdot)$ is not unique forwards then $\Xi^\gamma$ has positive two-dimensional Lebesgue measure. Since the sets $\Xi^{\gamma_1}$ and $\Xi^{\gamma_2}$ are, by Lemma \ref{Lem_uniquebackforw}, disjoint for $\gamma_1 \neq \gamma_2$, $\gamma_1,\gamma_2 \in \Gamma$ we conclude that only countably many of sets $\Xi^\gamma$ can have positive two-dimensional Lebesgue measure. Consequently, for all but at most countably many $\gamma \in \Gamma$ the characteristic $\gamma(\cdot)$ is unique forwards.
\end{proof}

\begin{proof}[Proof of Proposition \ref{Prop57new}]
To begin, we can assume, by Lemma \ref{Lem_forwards}, that, without loss of generality, for every $\gamma \in \Gamma^K$ the characteristic $\gamma(\cdot)$ is not only unique backwards but also unique forwards. Next, fix $K$ and define 
\begin{eqnarray*}
J_T^{bad}&:=& \{\gamma \in \Gamma^K: \forall_{\delta>0} \forall_{M>0} \exists_{\kappa \in (\gamma,\gamma+\delta)} \exists_{\kappa(\cdot)} \exists_{s \in [0,T]} \omega^{\gamma,\kappa}(s)>M\},\\
\tilde{J}_T^{bad}&:=& \{\gamma \in \Gamma^K: \forall_{\delta>0} \forall_{M>0} \exists_{\kappa \in (\gamma-\delta,\gamma)} \exists_{\kappa(\cdot)} \exists_{s \in [0,T]} \omega^{\gamma,\kappa}(s)>M\},
\end{eqnarray*}
where $\omega^{\gamma,\kappa}$ is given by \eqref{eq_omegagammakappa}  
and $\kappa(\cdot)$ is some characteristic of $u^{Tb}$ emanating from $\kappa$. Our goal is  to show that $\mathcal{L}^1(J_T^{bad})=\mathcal{L}^1(\tilde{J}_T^{bad})=0$, which directly implies \eqref{eqLem57new}.
To prove this, define for every $\gamma \in J_T^{bad}$ and every $M,\delta$ with $\delta<\frac 1 K$,
\begin{equation*}
\Pi_{\gamma}^{M,\delta}:= [\gamma,\kappa],
\end{equation*}
where $\kappa$ is any number such that $\kappa \in (\gamma,\gamma+\delta)$ and there exists a characteristic $\kappa(\cdot)$ of $u^{Tb}$, satisfying $\kappa(0)=\kappa$ and $s \in [0,T]$ such that $\omega^{\gamma,\kappa}(s)>M$. Having fixed the sets $\Pi_\gamma^{M,\delta}$ we note that for every fixed $M$ the family
 $$\mathcal{E}^M := \{\Pi_{\gamma}^{M,\delta}, \delta>0, \gamma \in J_T^{bad}\}$$ is a covering of $J_T^{bad}$ by closed intervals. By the Vitali covering theorem we obtain an at most countable pairwise disjoint family of closed intervals $\mathcal{F}^M \subset \mathcal{E}^M$ such that 
\begin{equation*}
J_T^{bad} \subset \bigcup \mathcal{F}^M
\end{equation*}
holds up to a set of measure $0$. The family $\mathcal{F}^M$ can be represented as $\{[\gamma_i,\kappa_i]\}_{i=1}^{\infty}$ with fixed characteristics $\kappa_i(\cdot)$ and fixed timepoints $s_i$ satisfying $\kappa_i(0)=\kappa_i$ and $\omega^{\gamma_i,\kappa_i}(s_i)>M$. 

Now, denoting $h_i(s):=\kappa_i(s)-\gamma_i(s)$  and $\omega_i(s):=\omega^{\gamma_i,\kappa_i}(s)$ and observing that $\frac d {ds} h_i(s) = \omega_i(s)h_i(s)$ we obtain
\begin{equation}
\label{eq_571}
h_i(s_i) = h_i(0)e^{\int_{0}^{s_i} \omega_i(s)ds} \ge h_i(0)e^{-KT}.
\end{equation}
Let us now prove some estimates for $\omega_i$, following \cite[Lemma 5.4]{CHGJ}.
\begin{lemma}
\label{LemA45}
Denote $A(x,y):= \frac 1 2 sgn(x-y) e^{-|x-y|}$. 
Function $\omega_i$ satisfies for $0 \le t \le T$ and $0 \le t_1 \le t_2\le T$
\begin{eqnarray}
\dot{\omega_i}(t) &\ge& -\omega_i^2(t) - LC,\label{eq_lem451}\\
\omega_i(t_2) &\ge& \sqrt{LC} \tan \left(-\sqrt{LC}(t_2-t_1) + \arctan\left(\frac {\omega_i(t_1)}{\sqrt{LC}} \right)\right)\label{eq_lem452}
\end{eqnarray}
where $L \in (0,\infty)$ is a constant such that  $\frac {A(x_2,y) -A(x_1,y)}{x_2 - x_1} \ge -L$ for every $x_2>x_1$ and every $y \in \mathbb{R}$ and $C$ is defined by \eqref{eq_defC}.  
\end{lemma}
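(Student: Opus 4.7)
The plan is to differentiate $\omega_i$ along the two characteristic curves, estimate the $P_x^{Tb}$ difference via the assumed lower bound on $A$, and then convert the resulting Riccati-type inequality into the tangent estimate through an $\arctan$ substitution.

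For \eqref{eq_lem451}, I set $N(t):=u^{Tb}(t,\kappa_i(t))-u^{Tb}(t,\gamma_i(t))$ and $D(t):=\kappa_i(t)-\gamma_i(t)$, noting that $D(t)>0$ on $[0,T]$ by the uniqueness-backwards argument already used in the proof of Lemma \ref{Lem_uniquebackforw}. The characteristic ODEs give $\dot D=N$ and $\dot N=P_x^{Tb}(t,\gamma_i(t))-P_x^{Tb}(t,\kappa_i(t))$, both Lipschitz in $t$ since $P_x^{Tb}$ is bounded. By the quotient rule,
\[
\dot{\omega_i}(t)=-\omega_i^2(t)+\frac{P_x^{Tb}(t,\gamma_i(t))-P_x^{Tb}(t,\kappa_i(t))}{\kappa_i(t)-\gamma_i(t)} \quad \text{a.e.}
\]
Inserting the convolution formula $P_x^{Tb}(t,x)=-\int_{\mathbb R}A(x,y)\bigl[(u^{Tb})^2+\tfrac12(u^{Tb}_x)^2\bigr](t,y)\,dy$, which follows from $\partial_x\bigl(\tfrac12 e^{-|x|}\bigr)=-A(x,0)$, the residual equals the integral of $\frac{A(\kappa_i(t),y)-A(\gamma_i(t),y)}{\kappa_i(t)-\gamma_i(t)}$ against the nonnegative density $(u^{Tb})^2+\tfrac12(u^{Tb}_x)^2$. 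The assumed lower bound $\ge -L$ on this difference quotient, together with the energy identity $\int_{\mathbb R}\bigl[(u^{Tb})^2+\tfrac12(u^{Tb}_x)^2\bigr](t,y)\,dy=\int_{\mathbb R}\bigl[u^2+\tfrac12 u_x^2\bigr](T-t,y)\,dy\le C$, then show the residual is at least $-LC$, yielding \eqref{eq_lem451}.

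For \eqref{eq_lem452}, I substitute $\theta(t):=\arctan(\omega_i(t)/\sqrt{LC})$; the chain rule together with \eqref{eq_lem451} gives $\dot\theta=\sqrt{LC}\,\dot\omega_i/(LC+\omega_i^2)\ge-\sqrt{LC}$ almost everywhere, and integration over $[t_1,t_2]$ yields $\theta(t_2)\ge\theta(t_1)-\sqrt{LC}(t_2-t_1)$. Applying $\tan$, which is strictly increasing on $(-\pi/2,\pi/2)$, delivers \eqref{eq_lem452} as long as the right-hand argument remains in that interval, a regime consistent with the choice $T_{\max}=\pi/(8\sqrt{LC})$ used in the sequel (with the understanding that the inequality is trivially valid when the argument leaves this branch, the right-hand side then being taken as $-\infty$). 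The main subtlety I anticipate is purely bookkeeping: justifying the pointwise quotient and chain rules despite $u^{Tb}$ being only $L^\infty_t H^1_x$. This is handled because $t\mapsto u^{Tb}(t,\gamma_i(t))$ is Lipschitz (by the same argument as in the Remark after Proposition \ref{Prop_Daf}) and $D$ stays bounded away from zero on $[0,T]$, so $\omega_i$ is absolutely continuous and the computations above hold almost everywhere.
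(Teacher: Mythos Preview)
Your argument is correct and follows essentially the same route as the paper: for \eqref{eq_lem451} both you and the paper differentiate $\omega_i=p_i/h_i$ along the two characteristics, rewrite the residual as an integral of $(A(\kappa_i,y)-A(\gamma_i,y))/h_i$ against the nonnegative density $(u^{Tb})^2+\tfrac12(u^{Tb}_x)^2$, and invoke the lower bound $-L$ together with the energy bound $C$. For \eqref{eq_lem452} the paper instead introduces the explicit solution $\Omega_i$ of $\dot\Omega_i=-\Omega_i^2-LC$ and concludes via a Gronwall comparison $\Omega_i-\omega_i\le 0$, whereas you linearize directly through $\theta=\arctan(\omega_i/\sqrt{LC})$ and integrate $\dot\theta\ge-\sqrt{LC}$; these are the same computation in different guises, your version being marginally more direct.
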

\begin{proof}
First observe that for $x_2>x_1$ we have 
\begin{eqnarray*}
A(x_2,y) - A(x_1,y) &=& \frac 1 2 (sgn(x_2-y) - sgn(x_1-y))e^{-|x_2-y|} \\ &&+ \frac 1 2 sgn(x_1-y)(e^{-|x_2-y|} - e^{-|x_1-y|})\\ &\ge& - \frac 1 2 (x_2 - x_1), 
\end{eqnarray*}
where we used the nonnegativity of $\frac 1 2 (sgn(x_2-y) - sgn(x_1-y))e^{-|x_2-y|}$ and Lipschitz continuity of $e^{-|x|}$ with constant $1$. Now, the Camassa-Holm equation can be written as
\begin{equation*}
u_t + (u^2/2)_x = \int_\mathbb{R} A(x,y)\left[u^2(t,y)+ \frac 1 2 u_x^2(t,y)\right]dy.
\end{equation*}
Denoting $p_i := \omega_i h_i = u^{Tb}(s,\kappa_i(s)) - u^{Tb}(s,\gamma_i(s))$, we thus obtain
\begin{eqnarray*}
\dot{\omega_i} = \frac {\dot{p_i}}{h_i} - \frac {p_i \dot{h_i}}{h_i^2} &=& - \omega_i^2 + \frac 1 {h_i} \int_\mathbb{R} (A(\kappa_i(s),y) - A(\gamma_i(s),y)) \left[(u^{Tb})^2(t,y) + \frac 1 2 (u^{Tb}_x)^2(t,y)\right]dy \\ &\ge & -\omega_i^2 - L \int_\mathbb{R} ((u^{Tb})^2(t,y) + \frac 1 2 (u^{Tb}_x)^2(t,y)) dy \ge -\omega_i^2 - LC,
\end{eqnarray*}
where we used the fact that $$\int_\mathbb{R} \left((u^{Tb})^2(t,y) + \frac 1 2 (u^{Tb}_x)^2(t,y)\right) dy = \int_{\mathbb{R}} \left(u^2(T-t,y) + \frac 1 2 u_x^2(T-t,y)\right) dy \le C.$$ This proves \eqref{eq_lem451}. To prove \eqref{eq_lem452} we observe that $\Omega_i(t):=\sqrt{LC} \tan(-\sqrt{LC}(t-t_1) + \arctan\left(\frac {\omega_i(t_1)}{\sqrt{LC}} \right)$ solves the equation $\dot{\Omega}_i(t) = - \Omega_i^2(t) - LC$ with initial condition $\Omega_i(t_1) = \omega_i(t_1)$. Thus, $\frac d {dt} (\Omega_i - \omega_i) \le -\Omega_i^2 + \omega_i^2 = - (\Omega_i + \omega_i)(\Omega_i - \omega_i)$ and hence, by Gronwall's inequality, $\Omega_i - \omega_i \le 0$. 
\end{proof}

Returning to the proof of Proposition \ref{Prop57new} let $M \ge \sqrt{LC} \tan (3\pi/8)$. Then $\omega_i(s_i)>M$ implies 
\begin{equation}
\label{eq_5705}
\omega_i(t)\ge \sqrt{LC}
\end{equation} 
for every $t \in (s_i,T]$, where we used \eqref{eq_lem452} and the assumption $T<T_{max}$, see \eqref{eq_defTmax}. Then, using \eqref{eq_lem451} and equality $\dot{h}_i = p_i = h_i \omega_i$ we obtain 
\begin{equation*}
\frac d {dt} (h_i \omega_i) = \dot{h_i}\omega_i+h_i \dot{\omega_i} \ge \omega_i^2 h_i +h_i(-\omega_i^2-LC)  = - h_i LC = - (h_i \omega_i) \frac {LC}{\omega_i} \ge -\sqrt{LC}(h_i\omega_i).
\end{equation*}
Hence, 
\begin{equation}
\label{eq_572}
h_i(T)\omega_i(T) \ge e^{-(T-s_i)\sqrt{LC}} h_i(s_i)\omega_i(s_i) \ge e^{-T\sqrt{LC}} h_i(s_i)\omega_i(s_i).
\end{equation}
Using \eqref{eq_571}-\eqref{eq_572}, and the fact that characteristics $\gamma_i(\cdot)$ do not collide with any other characteristics on $[0,T]$ and hence $[\gamma_i(t),\kappa_i(t)]$ are disjoint and of positive length for every $t \in [0,T]$, we calculate
\begin{eqnarray*}
\int_{\mathbb{R}} (u^{Tb}_x)^2(T,x)dx &\ge& \sum_{i=1}^{\infty} \int_{\gamma_i(T)}^{\kappa_i(T)} (u_x^{Tb})^2(T,x)dx \\
&\ge& \sum_{i=1}^{\infty} (\kappa_i(T)-\gamma_i(T)) \frac {1}{\kappa_i(T)-\gamma_i(T)} \int_{\gamma_i(T)}^{\kappa_i(T)} (u_x^{Tb})^2(T,x)dx \\
&\ge& \sum_{i=1}^{\infty} (\kappa_i(T)-\gamma_i(T)) \left[ \frac {1}{\kappa_i(T)-\gamma_i(T)} \int_{\gamma_i(T)}^{\kappa_i(T)} u_x^{Tb}(T,x)dx \right]^2 \\
&\ge& \sum_{i=1}^{\infty} h_i(T)\omega_i^2(T)\\&\ge& \sqrt{LC}\sum_{i=1}^{\infty} h_i(T) \omega_i(T)\\
&\ge& \sqrt{LC}\sum_{i=1}^{\infty} h_i(s_i)\omega_i(s_i)e^{-T\sqrt{LC}} \\
&\ge& M\sqrt{LC}\sum_{i=1}^{\infty} h_i(s_i)e^{-T\sqrt{LC}} \\
&\ge& M\sqrt{LC}\sum_{i=1}^{\infty} h_i(0)e^{-KT}e^{-T\sqrt{LC}} \\
&\ge& M\sqrt{LC}e^{-KT}e^{-T\sqrt{LC}} \mathcal{L}^1(J_T^{bad}). 
\end{eqnarray*}
Passing to the limit $M\to \infty$ we conclude that $\mathcal{L}^1(J_T^{bad})=0$. The proof for $\tilde{J}_T^{bad}$ is similar.
\end{proof}

Continuing Part 1 of the proof of Theorem \ref{Th_STT}, for every $K=1,2,\dots$, we obtain by Proposition \ref{Prop57new} that
\begin{equation*}
\Gamma^K = \bigcup_{N \in \mathbb{N}} \Gamma^{K,unique,N} \cup \tilde{Z}_K,
\end{equation*}
where $\tilde{Z}_K$ are sets of measure $0$. Moreover, observe that if $K \le N$ then $\Gamma^{K,unique,N} \subset \Gamma^{N,unique,N}$
and hence

\begin{equation}
\label{eq_GammaDecomp}
\Gamma = \bigcup_{K=1}^\infty \Gamma^K = \bigcup_{K=1}^\infty \left(\bigcup_{N=1}^\infty \Gamma^{K,unique,N}\right)\cup \tilde{Z}_K = \bigcup_{N=1}^\infty \Gamma^{N,unique,N} \cup \tilde{Z}_N. 
\end{equation}

\begin{lemma}
\label{Lem_5757}
\begin{enumerate}[i)]
\item There exist sets $\hat{Z}_N \subset \Gamma^{N,unique,N}$ such that $\mathcal{L}^1(\hat{Z}_N)=0$ and for every $\gamma \in \Gamma^{N,unique,N} \backslash \hat{Z}_N$ we have that $\gamma(T)$ is a Lebesgue point of $u_x^{Tb}(T,\cdot)$ and $\gamma(t)$ is a Lebesgue point of $u_x^{Tb}(t,\cdot)$ for almost every $t \in [0,T]$.
\item $\Gamma^{N,unique,N} \backslash \hat{Z}_N \subset L_{Tb}^{unique,N} $.
\item For every $\zeta \in ((\gamma-1/N)^r(T),(\gamma+1/N)^l(T))$ we have, for $0\le t \le T$, that  $$\zeta(t) \in [(\gamma-1/N)^r(T-t) , (\gamma+1/N)^l(T-t)],$$ where $\zeta(\cdot)$ is any characteristic of $u$ satisfying $\zeta(0) = \zeta$ and $(\gamma+1/N)^l(\cdot)$, $(\gamma-1/N)^r(\cdot)$ are the leftmost and rightmost characteristics of $u^{Tb}$ satisfying $(\gamma+1/N)^l(0)=\gamma + 1/N$ and $(\gamma-1/N)^r(0)=\gamma - 1/N$, respectively.
\item For every $\gamma \in \Gamma^{N,unique,N}\backslash \hat{Z}_N$ we have $\gamma(T) \in L_T^{unique,\tilde{N}}$ for $\tilde{N}$ large enough. 
\item There exists  a set $\hat{\hat{Z}}_N$ such that $\mathcal{L}^1(\hat{\hat{Z}}_N) = 0$ and if $\gamma \in \Gamma^{N,unique,N} \backslash (\hat{Z}_N \cup \hat{\hat{Z}}_N) $ then $\gamma(T) \in L_T^{unique,\tilde{N}} \backslash Z_T$ for some $\tilde{N} \ge N$, where $Z_T$ is the null set from Proposition \ref{Prop_MainCH}. 
\end{enumerate}
\end{lemma}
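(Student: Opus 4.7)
The plan is to handle items (i)--(v) in order, exploiting two key properties of $u^{Tb}$-characteristics emanating from $\Gamma^{N,unique,N}$. First, the flow $\gamma\mapsto\gamma(t)$ is locally bi-Lipschitz (with constants $e^{\pm Nt}$ on intervals of length less than $1/N$), which follows from the identity $|\kappa(t)-\gamma(t)|=|\kappa-\gamma|\exp\int_0^t\omega^{\gamma,\kappa}(s)\,ds$ together with $|\omega^{\gamma,\kappa}|\le N$. Second, forward order preservation of $u^{Tb}$-characteristics on $[0,T]$ follows from $u^{Tb}_x\ge -c(1+1/(T-s))$ and continuity at $s=T$ (Remark \ref{Rem_44}, \eqref{eq_TbDissip}). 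Throughout I invoke the bijection between $u$- and $u^{Tb}$-characteristics from Lemma \ref{Lem_corresp} and the identity $u_x(s,\cdot)=-u^{Tb}_x(T-s,\cdot)$.

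For (i), the bi-Lipschitz property implies that $\gamma\mapsto\gamma(t)$ preserves Lebesgue null sets in both directions. Pulling back the null set of non-Lebesgue points of $u^{Tb}_x(T,\cdot)$ yields a null subset $\hat{Z}_N^{(1)}$ of $\Gamma^{N,unique,N}$. For the a.e.-$s$ clause, let $E:=\{(s,y)\in[0,T]\times\mathbb{R}:y\text{ is not a Lebesgue point of }u^{Tb}_x(s,\cdot)\}$, which is planar-null by slice-wise Fubini. Pulling $E$ back through $(s,\gamma)\mapsto(s,\gamma(s))$ is null on $[0,T]\times\Gamma^{N,unique,N}$ by covering $\Gamma^{N,unique,N}$ with countably many intervals of length less than $1/N$ and applying the slice-wise bi-Lipschitz property; another Fubini gives a null set $\hat{Z}_N^{(2)}$ of $\gamma$'s for which the $\gamma$-slice of the pullback fails to be null. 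Set $\hat{Z}_N:=\hat{Z}_N^{(1)}\cup\hat{Z}_N^{(2)}$. Part (ii) is then an immediate verification of the three clauses of Definition \ref{def_LTKTb}: forward uniqueness on $[0,T]$ comes from Lemma \ref{Lem_uniquebackforw} (applied with $K=N$), the Lebesgue-point clause from (i), and the quotient bound $|\omega^{\gamma,\kappa}|\le N$ directly from the definition of $\Gamma^{N,unique,N}$.

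For (iii), denote $r:=(\gamma-1/N)^r$ and $l:=(\gamma+1/N)^l$. If $\tilde{\zeta}(s_0)<r(s_0)$ for some $s_0<T$, forward order preservation of $u^{Tb}$-characteristics from $s_0$ to $T$ would give $\tilde{\zeta}(T)\le r(T)$, contradicting $\tilde{\zeta}(T)=\zeta>r(T)$; hence $\tilde{\zeta}(s)\ge r(s)$ for all $s$, and symmetrically $\tilde{\zeta}(s)\le l(s)$. Translating via Lemma \ref{Lem_corresp} with $t=T-s$ yields (iii). For (iv), the natural candidate through $\gamma(T)$ is $\zeta(t):=\gamma(T-t)$; any competing $u$-characteristic $\zeta_1$ with $\zeta_1(0)=\gamma(T)$ lifts to a $u^{Tb}$-characteristic $\tilde{\zeta}_1$ with $\tilde{\zeta}_1(T)=\gamma(T)$, and the backwards uniqueness of $\gamma(\cdot)$ on $[0,T]$ from Lemma \ref{Lem_uniquebackforw} forces $\tilde{\zeta}_1\equiv\gamma$, hence $\zeta_1\equiv\zeta$. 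The Lebesgue-point clause transfers from (i) via the identity for $u_x$. For the quotient bound, I pick $\tilde{N}\ge N$ so large that $(\gamma(T)-1/\tilde{N},\gamma(T)+1/\tilde{N})\subset(r(T),l(T))$. For any $\eta$ in this interval with $\eta\neq\gamma(T)$ and any $u$-characteristic $\eta(\cdot)$, part (iii) forces $\eta(T)\in[\gamma-1/N,\gamma+1/N]$; the endpoint $\eta(T)=\gamma-1/N$ is excluded because the rightmost property of $r$ would then give $\eta(0)\le r(T)$, and similarly for $\gamma+1/N$; while $\eta(T)=\gamma$ is excluded by forward uniqueness of $\gamma(\cdot)$, which would force $\eta(0)=\gamma(T)$. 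Thus $\eta(T)\in(\gamma-1/N,\gamma+1/N)\setminus\{\gamma\}$, so the definition of $\Gamma^{N,unique,N}$ gives $|\omega^{\gamma,\eta(T)}(s)|\le N$, which via $\omega^{u}(t)=-\omega^{u^{Tb}}(T-t)$ yields $|\omega^{u}(t)|\le N\le\tilde{N}$.

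Finally, for (v), I set $\hat{\hat{Z}}_N:=\{\gamma\in\Gamma^{N,unique,N}:\gamma(T)\in Z_T\}$ and invoke the local bi-Lipschitz property of $\gamma\mapsto\gamma(T)$ once more to transfer the null set $Z_T$ back to a null $\hat{\hat{Z}}_N$. The main obstacle I anticipate is the quotient-bound verification in (iv), which requires the three exclusions $\eta(T)\neq\gamma-1/N$, $\eta(T)\neq\gamma+1/N$, and $\eta(T)\neq\gamma$; these combine forward order preservation, the rightmost/leftmost property of $r$ and $l$, and forward uniqueness of $\gamma(\cdot)$, so the argument is not deep but requires invoking each ingredient at the right moment.
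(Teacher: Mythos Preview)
Your proof is correct and follows essentially the same route as the paper. The one noteworthy difference is in (iii): the paper argues by concatenation, supposing $\zeta(\tau) = (\gamma+1/N)^l(T-\tau)$ at some first time $\tau$ and gluing $l$ on $[0,T-\tau]$ to $\zeta(T-\cdot)$ on $(T-\tau,T]$ to produce a $u^{Tb}$-characteristic from $\gamma+1/N$ that ends strictly to the left of the leftmost one, a contradiction. That argument uses only the extremal property of $l$ and $r$ and does not invoke dissipativity. Your order-preservation argument via \eqref{eq_TbDissip} and Remark~\ref{Rem_44} is equally valid in context (Theorem~\ref{Th_STT} assumes $u$ dissipative), though it imports an extra hypothesis the paper's argument avoids. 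In (iv), you are more explicit than the paper about forward uniqueness of the $u$-characteristic through $\gamma(T)$ and about the three exclusions $\eta(T)\notin\{\gamma-1/N,\gamma,\gamma+1/N\}$; the paper's claim that $\kappa\in(\gamma-1/N,\gamma+1/N)$ follows ``using (iii)'' technically requires exactly these exclusions, so your treatment is in fact the more complete of the two.
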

\begin{proof}
\begin{enumerate}[i)]
\item First observe that for every $t \in [0,T]$ the mapping $M_t: \Gamma^{N,unique,N}\to \mathbb{R}$ defined by $M_t(\gamma) = \gamma(t)$, where $\gamma(\cdot)$ is the unique characteristic of $u^{Tb}$ satisfying $\gamma(0)=\gamma$, is bijective, Lipschitz continuous, and its inverse is Lipschitz continuous. Indeed, bijectivity follows by Lemma \ref{Lem_uniquebackforw} and Lipschitz continuity of $M_t$ and its inverse follows from estimate $-N \le \omega^{\gamma,\kappa} \le N$ whenever $\kappa \in (\gamma-1/N,\gamma) \cup (\gamma,\gamma+1/N)$, which implies $e^{-tN}(\kappa - \gamma) \le M_t(\kappa) - M_t(\gamma) \le e^{tN}(\kappa - \gamma)$. Consequently, $M_t$ and $M_t^{-1}$ map null sets to null sets. Now, the set of Lebesgue points of $u_x^{Tb}(T,\cdot)$ is a set of full Lebesgue measure and so is its image under mapping $M_t^{-1}$. Similarly, the set of Lebesgue points of $u_x^{Tb}(t,\cdot)$ has full Lebesgue measure for every $t \in [0,T]$. Hence, the set $\{(t,y): y=M_t(\gamma) \mbox{ for some } \gamma \in \Gamma^{N,unique,N}  \mbox{ and } y \mbox{ is not a Lebesgue point of } u_x^{Tb}(t,\cdot) \}$ has, by Fubini theorem, two-dimensional Lebesgue measure $0$.  Since the mapping $(t,\gamma) \mapsto (t,M_t(\gamma))$ is Lipschitz continuous with Lipschitz inverse on $[0,T] \times \Gamma^{N,unique,N}$ we conclude that $\{(t,\gamma): M_t(\gamma) \mbox{ is not a Lebesgue point of } u_x^{Tb}(t,\cdot)\}$ has also two-dimensional measure $0$ and so, again by the Fubini theorem, for almost every $\gamma$ the set of $t$ such that $M_t(\gamma)$ is not a Lebesgue point of $u_x^{Tb}(t,\cdot)$ has measure $0$. 
\item Follows from i) and definitions of $\Gamma^{N,unique,N}$ and $L_{Tb}^{unique,N}$, see formulation of Proposition \ref{Prop57new} and Definition \ref{def_LTKTb}.

\item Suppose $\zeta(\tau) = (\gamma+1/N)^l(T-\tau)$ for some $\tau \in (0,T]$. Let $\tau$ be the smallest one satisfying this property, which exists due to continuity of characteristics and inequality $\zeta<(\gamma+1/N)^l(T)$. Then the characteristic ${(\gamma+1/N)}(\cdot)$ of $u^{Tb}$ defined by $$(\gamma+1/N)(s):= \begin{cases} 
(\gamma+1/N)^l(s) &\mbox { if } s \in [0,T-\tau],\\
\zeta(T-s) &\mbox { if } s \in (T-\tau,T]
\end{cases}$$
satisfies $(\gamma+1/N)(s) \le (\gamma+1/N)^l(s)$ for every $s \in [0,T]$ and $(\gamma+1/N)(T) < (\gamma+1/N)^l(T),$ which contradicts the definition of leftmost characteristic.
We argue similarly when $\zeta(\tau) = (\gamma-1/N)^r(T-\tau)$ for some $\tau \in (0,T]$.

\item Take  $\tilde{N}$ so large that $\tilde{N} \ge N$ and  $$[\gamma(T)-1/\tilde{N},\gamma(T)+1/\tilde{N}] \subset ((\gamma-1/N)^r(T),(\gamma+1/N)^l(T)).$$ Using iii) we obtain that for every $\eta \in [\gamma(T)-1/\tilde{N},\gamma(T)) \cup (\gamma(T),\gamma(T)+1/\tilde{N}]$ and every characteristic $\eta(\cdot)$ of $u$ such that $\eta(0)=\eta$  there exists $\kappa \in (\gamma-1/N,\gamma+1/N)$ and a characteristic $\kappa(\cdot)$ of $u^{Tb}$ such that $\kappa(0)=\kappa$ and  $\eta(t) = \kappa(T-t)$ for every $t \in [0,T]$. Indeed, it is enough to take $\kappa = \eta(T)$ and $\kappa(\cdot)$ defined by $\kappa(t) := \eta(T-t)$. Note that $\kappa(\cdot)$ is indeed a characteristic of $u^{Tb}$ by Lemma \ref{Lem_corresp}. We conclude using ii), the fact that, denoting $\zeta=\gamma(T)$, we have
\begin{eqnarray*}
\omega(t) &=& \frac {u(t,\eta(t)) - u(t,\zeta(t))}{\eta(t)-\zeta(t)} = \frac {u(t,\kappa(T-t)) - u(t,\gamma(T-t))}{\kappa(T-t)-\gamma(T-t)}\\ &=& \frac {-u^{Tb}(T-t,\kappa(T-t)) +  u(T-t,\gamma(T-t))}{\kappa(T-t)-\gamma(T-t)} = -\omega^{\gamma,\kappa}(T-t)
\end{eqnarray*}
and hence $-\tilde{N} \le \omega(t) \le \tilde{N}$ iff $-\tilde{N} \le \omega^{\gamma,\kappa}(T-t) \le \tilde{N}$ 
 and verifying the definition of $L_T^{unique,\tilde{N}}$.
\item Follows by the fact that $\mathcal{L}^1(Z_T) = 0$ and Remark \ref{Rem_0to0}. 
\end{enumerate}
\end{proof}

From Lemma \ref{Lem_5757} we conclude that  if $\gamma \in \bigcup_{N=1}^\infty \Gamma^{N,unique,N} \backslash (\hat{Z}_N \cup \hat{\hat{Z}}_N)$ then $$\gamma(T) \in \bigcup_{\tilde{N}} L_T^{unique,\tilde{N}} \backslash Z_T = L_T^{unique} \backslash Z_T = S_T$$ and hence $\gamma \in S_T(T)$, where $S_T$ is given in Proposition \ref{Prop_MainCH} and $S_T(T)=\{\gamma(T): \gamma \in S_T\}$ is the thick pushforward. Thus,
\begin{equation*}
\bigcup_{N=1}^\infty \Gamma^{N,unique,N} \backslash (\hat{Z}_N \cup \hat{\hat{Z}}_N) \subset S_T(T)
\end{equation*}
and therefore, by \eqref{eq_GammaDecomp}, there exists a null set $Z^\Gamma \subset \Gamma$ such that 
\begin{equation*}
\Gamma \backslash Z^\Gamma \subset S_T(T).
\end{equation*}

\noindent {\bf Part 2.}
Let us consider now $\gamma \in \mathbb{R} \backslash \Gamma$. For every $K=1,2,\dots$ define  

\begin{eqnarray*}
A^K &:=& \{\gamma \in \mathbb{R}\backslash \Gamma: u^{Tb}_x(0,\gamma)=\lim_{h\to 0} \frac{u^{Tb}(0,\gamma+h)-u^{Tb}(0,\gamma)}{h} \mbox{ and }\\&&\mbox{ for every characteristic } \gamma(\cdot) \mbox{ of } u^{Tb} \mbox{ with } \gamma(0)=\gamma \mbox{ and  }  \\
&&\mbox{ for every } \kappa \in (\gamma-1/K, \gamma) \cup (\gamma,\gamma + 1/K) \mbox{ and every characteristic } \kappa(\cdot) \mbox{ of } u^{Tb}\\
&&\mbox{ satisfying } \kappa(0)=\kappa \mbox{ and every } s \in [0,T-1/K] \mbox{ we have } \omega^{\gamma,\kappa}(s) \ge - const (1+K)\},
\end{eqnarray*}
where $\omega^{\gamma,\kappa}$ was defined in \eqref{eq_omegagammakappa} and $const$ is the same as in \eqref{eq_TbDissip}.

Note that $A^K$ is a full measure subset of $\mathbb{R} \backslash \Gamma$ by \eqref{eq_TbDissip}.
Reasoning exactly as in Part 1 for $\Gamma^K$ (the only difference being that in Part 1 we had a time interval $[0,T]$ and the lower bound $\omega^{\gamma,\kappa} \ge -K$, whereas here we have the time interval $[0,T-1/K]$ and lower bound $\omega^{\gamma,\kappa} \ge -const(1+K)$) we obtain 
\begin{equation*}
A^K = \bigcup_{N \in \mathbb{N}} A^{K,unique,N} \cup \tilde{Z}_K,
\end{equation*}
where $\mathcal{L}^1(\tilde{Z}_K)=0$ and 
\begin{eqnarray*}
A^{K,unique,N} &:=& \{\gamma \in \mathbb{R}\backslash \Gamma: u^{Tb}_x(0,\gamma)=\lim_{h\to 0} \frac{u^{Tb}(0,\gamma+h)-u^{Tb}(0,\gamma)}{h} \mbox{ and }\\&&\mbox{ for every characteristic } \gamma(\cdot) \mbox{ of } u^{Tb} \mbox{ with } \gamma(0)=\gamma \mbox{ and  }  \\
&&\mbox{ for every } \kappa \in (\gamma-1/K, \gamma) \cup (\gamma,\gamma + 1/K) \mbox{ and every characteristic } \\ &&\kappa(\cdot) \mbox{ of } u^{Tb} \mbox{ satisfying } \kappa(0)=\kappa \mbox{ and every } s \in [0,T-1/K] \\ &&\mbox{we have } N \ge\omega^{\gamma,\kappa}(s) \ge - const (1+K)\}.
\end{eqnarray*}

Moreover, as in Lemma \ref{Lem_5757}i,ii we obtain that, for every $N> const(1+K)$, there exists a Lebesgue null set $\hat{Z}_{K,N}$ 
such that 
\begin{equation*}
A^{K,unique,N} \backslash \hat{Z}_{K,N} \subset L_{(T-1/K),Tb}^{unique,N}
\end{equation*}
and $L_{(T-1/K),Tb}^{unique,N}$ is defined by \eqref{eq_LTKTb}.

This, by Proposition \ref{Prop_MainCH} applied to the backward solution $u^{Tb}$, implies that for almost every $\gamma \in A^{K,unique,N}$, where $N>const(1+K)$, the characteristic of $u^{Tb}$ emanating from $\gamma$ is unique, satisfies
\begin{equation}
\label{eq_KN}
\forall_{\kappa \in (\gamma- \frac 1 N, \gamma) \cup (\gamma,\gamma + \frac 1 N),\kappa(\cdot), s \in [0,T-1/K]} N \ge \omega^{\gamma,\kappa} \ge -N
\end{equation}
and satisfies equation \eqref{Eq_PropL}, with $u^{Tb}$ and $P^{Tb}$ in place of $u$ and $P$, respectively, i.e. $\gamma(\cdot)$ satisfies
\begin{equation}
\label{Eq_PropLTb}
\frac {d}{dt} u^{Tb}_x(t,\gamma(t)) = (u^{Tb})^2(t,\gamma(t)) - \frac 1 2 (u_x^{Tb})^2(t,\gamma(t)) - P^{Tb}(t,\gamma(t))
\end{equation}
for $t \in [0,T-1/K]$, where $P^{Tb}$ is defined in Lemma \ref{Lem_corresp} and the equation is satisfied in the sense given in Remark \ref{Rem_Modification}.
Let 
\begin{equation*}
A:= \bigcap_{K=1}^{\infty} A^K.
\end{equation*}
Then $A$ is a full measure subset of $\mathbb{R} \backslash \Gamma$ and there exists a full measure subset of $A$, which we will call $\tilde{A}$, such that for every $\gamma \in \tilde{A}$ 
\begin{itemize}
\item $\gamma(\cdot)$ is unique on $[0,T-1/K]$ for every $K=1,2,\dots$ (forwards and backwards)
\item $\gamma(s)$ is a Lebesgue point of $u_x^{Tb}(s,\cdot)$ for almost every $s \in [0,T)$
\item for every $K>0$ there exists $N\ge const(1+K)$ such that \eqref{eq_KN} holds,
\item $u^{Tb}$ satisfies equation \eqref{Eq_PropLTb} along $\gamma(\cdot)$ on $[0,T)$ in the sense of Remark \ref{Rem_Modification}.
\end{itemize}
Since, however, $\gamma \notin \Gamma$ then

\begin{itemize}
\item $u^{Tb}$ does not satisfy 
\begin{equation}
\label{eq_doesnot}
\forall_{\kappa \in (\gamma- \frac 1 N, \gamma) \cup (\gamma,\gamma + \frac 1 N),\kappa(\cdot), s \in [0,T]}  N \ge \omega^{\gamma,\kappa}(s) \ge -N
\end{equation}
for any $N$ (otherwise we would have $\gamma \in \Gamma^N \subset \Gamma$). In other words, for every $N>0$ and every $\epsilon \in (0,T)$ there exists $\kappa \in (\gamma - 1/N, \gamma) \cup (\gamma,\gamma+1/N)$ and a characteristic $\kappa(\cdot)$ of $u^{Tb}$ satisfying $\kappa(0)=\kappa$ and $\tilde{\epsilon} \in (0,\epsilon)$
such that $\omega^{\gamma,\kappa}(T-\tilde{\epsilon}) > N$.

\end{itemize}
{\bf Claim:} 
\begin{equation*}
\{\gamma(T): \gamma \in \tilde{A}\} \cap \bigcup_{R>0} L_R^{unique} = \emptyset.
\end{equation*}
Indeed, otherwise, we would have $\gamma(T) \in L_R^{unique,\tilde{N}}$ for some $R>0, \tilde{N}>0$ and $\gamma \in \tilde{A}$. Fix $\epsilon<R$ such that if $\kappa(\cdot)$ is any characteristic of $u^{Tb}$ then $$\kappa(T-\epsilon) \in (\gamma(T-\epsilon) - 1/(2\tilde{N}) , \gamma(T-\epsilon) + 1/(2\tilde{N}))$$
implies $$\kappa(T) \in (\gamma(T) - 1/\tilde{N}, \gamma(T)+1/\tilde{N}).$$ Existence of such $\epsilon$ follows by finite propagation speed of characteristics, which is bounded by $\sup(|u|)$. Now take $\hat{N}$ so large that $$\kappa \in (\gamma - 1/\hat{N}, \gamma)\cup (\gamma,\gamma+1/\hat{N})$$ implies $$\kappa(T-\epsilon) \in (\gamma(T-\epsilon) - 1/(2\tilde{N}), \gamma(T-\epsilon) + 1/(2\tilde{N})).$$ 
This is possible, since by \eqref{eq_KN} for $K=\lceil 1/\epsilon \rceil$, there exists $\bar{N}$ such that $\omega^{\gamma,\kappa}(s) \le \bar{N}$ for $s \in [0,T-\epsilon]$ and hence $|\kappa(T-\epsilon) - \gamma(T-\epsilon)| \le e^{(T-\epsilon)\bar{N}} |\kappa - \gamma|$. Combining the two implications above we conclude that if $$\kappa \in (\gamma - 1/\hat{N}, \gamma)\cup (\gamma,\gamma+1/\hat{N})$$
then $$\kappa(T) \in (\gamma(T) - 1/\tilde{N}, \gamma(T)) \cup (\gamma(T), \gamma(T) + 1/\tilde{N}).$$
This means that \eqref{eq_doesnot} holds for $N \ge \max(\tilde{N},\bar{N},\hat{N})$, which is a contradiction. Thus, the claim is proven.

The claim implies, by Proposition \ref{Prop_MainCH},
\begin{equation*}
\{\gamma(T): \gamma \in \tilde{A}\} \cap \bigcup_{R>0} S_R = \emptyset,
\end{equation*}
where sets $S_R$ correspond to the original solution $u(t,x)$ as in Proposition \ref{Prop_MainCH}.

Since, however, $\bigcup_{R>0} S_R$ is a full measure subset of $\mathbb{R}$ by Proposition \ref{Prop_MainCH}, the Lebesgue measure of $$B:=\{\gamma(T): \gamma \in \tilde{A}\}$$ is zero.
Using the fact that if $\gamma(\cdot)$ is a characteristic of $u^{Tb}$ then $\gamma(T-\cdot)$ is, by Lemma \ref{Lem_corresp}, a characteristic of $u$ corresponding to the same curve we obtain
\begin{equation}
\label{eq_AeqB}
\tilde{A}(t):=\{\gamma(t): \gamma \in \tilde{A}\} \subset B(T-t),
\end{equation}
where $B(T-t)$ is the thick pushforward along the characteristics of the original solution $u(t,x)$. Note that in \eqref{eq_AeqB} there is no equality, since there might exist other characteristics emanating from $B$. Hence, by Lemma \ref{Lem_limux2} and Lemma \ref{Lem_L1Bt} we obtain 
\begin{itemize}
\item 
\begin{equation*}
\lim_{t \to 0^+} \mathcal{L}^1(B(t)) = 0,
\end{equation*}
\item
\begin{equation}
\label{eq_limL1At}
\lim_{t \to 0^+} \mathcal{L}^1(\tilde{A}(T-t)) = 0,
\end{equation}
\item
\begin{equation*}
\lim_{t \to 0^+} \int_{B(t)} u_x^2(t,x) dx=0,
\end{equation*}
\item
\begin{equation*}
\lim_{t \to 0^+} \int_{\tilde{A}(T-t)} (u^{Tb}_x)^2(T-t,x) dx=0.
\end{equation*}
\end{itemize}

Suppose now $\tilde{A}$ is a set of positive Lebesgue measure (without loss of generality bounded and subset of $[-D,D]$ for some $D>0$). Since $$u_x^{Tb}(t,x) \ge -const(1+1/(T-t)),$$ see \eqref{eq_TbDissip}, we conclude, by Proposition \ref{Prop_ChangeOfV} and Remark \ref{Rem_0to0}, that $\tilde{A}(t)$ has positive Lebesgue measure for every $t \in [0,T)$. 
Let us fix $M$ so large that
\begin{equation}
\label{eq_Mbound}
\begin{cases}\frac 1 2 M^2 > \sup(u^2) + \sup(P),\\
M>2(\sup(u^2)+\sup(P)).
\end{cases}
\end{equation}

{\bf Claim:} There exists $t_0 \in (0,T)$ such that 
$\mathcal{L}^1(\{y \in \tilde{A}(T-t_0): u_x^{Tb}(T-t_0,y)\le -M\} ) > 0.$
 \newline \\
Indeed, otherwise we would have $u_x^{Tb} (s,\gamma(s)) \ge -M$ for every $s \in (0,T)$ and almost every $\gamma \in \tilde{A}$. Hence, recalling Proposition \ref{Prop_ChangeOfV} and the fact that $\tilde{A} \subset L_{(T-1/K),Tb}^{unique}$ (see Definition \ref{def_LTKTb}) for every $K=1,2,\dots$ we obtain
\begin{equation*}
\mathcal{L}^1(\tilde{A}(T-t)) = \int_{\tilde{A}} e^{\int_{0}^{T-t} u_x^{Tb}(s,\gamma(s))ds} d\gamma  
\ge \int_{\tilde{A}} e^{-TM}d\zeta = e^{-TM}\mathcal{L}^1(\tilde{A}).
\end{equation*}
Taking $t \to 0^+$ we obtain, using \eqref{eq_limL1At}, $\mathcal{L}^1(\tilde{A})=0$, which contradicts the assumption that $\mathcal{L}^1(\tilde{A})>0$. Thus the claim is proven.

Let  $\hat{A} = \{\gamma \in \tilde{A}:  {u^{Tb}_x(T-t_0,\gamma(T-t_0))\le -M}\}$, where $t_0$ is obtained from the claim above. 
Then, by Claim, $\mathcal{L}^1(\hat{A}(T-t_0))>0$.

\begin{figure}[h!]
\center
\includegraphics[width=5cm]{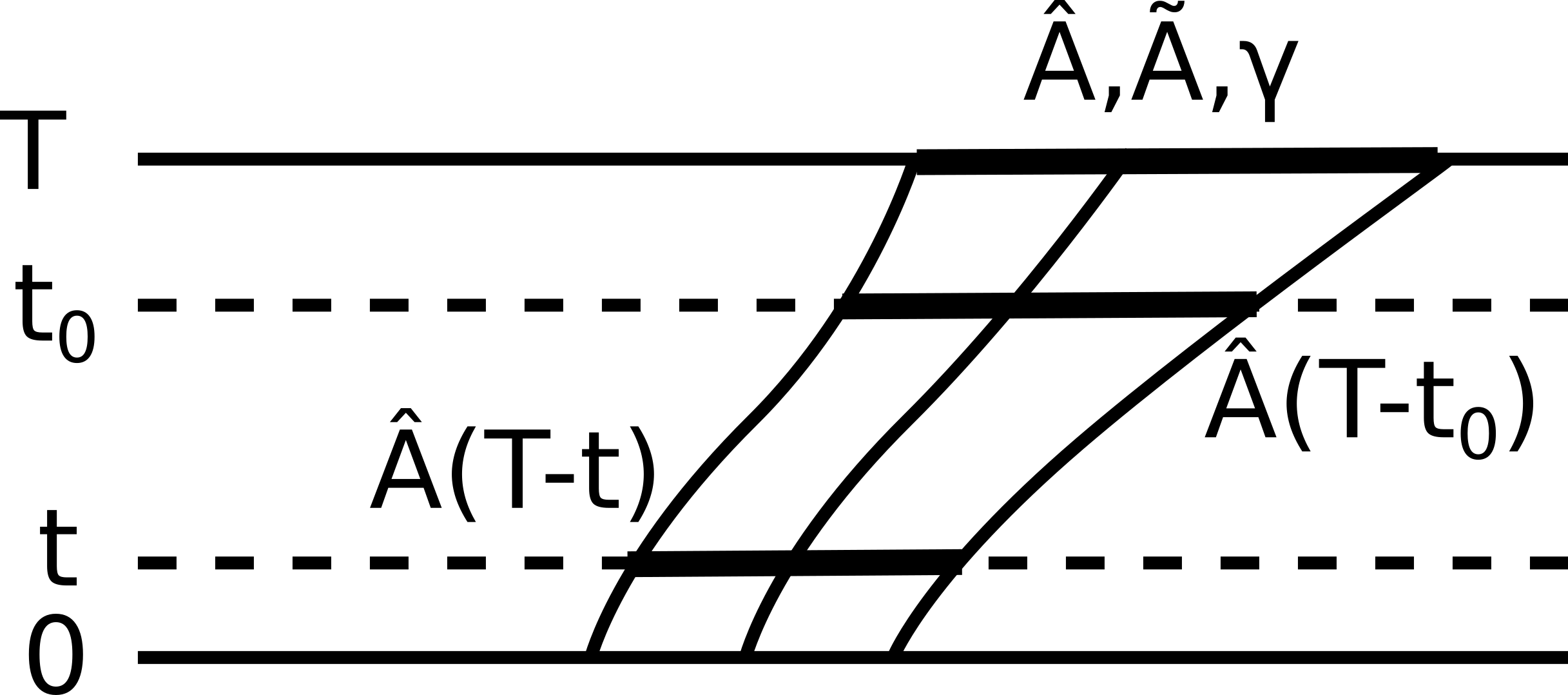}
\caption{Schematic presentation of Part 2 of the proof of Theorem \ref{Th_STT}. The set $\tilde{A}$, being a full measure subset of $\mathbb{R}\backslash \Gamma$ can be visualized at time $T$ (which corresponds to time $0$ for the backward solution $u^{Tb}$). $\hat{A}$ is a subset of $\tilde{A}$ such that for some $t_0$ we have $\mathcal{L}^1(\hat{A}(T-t_0))>0$ and $u_x^{Tb}(T-t_0,x) \le -M$ for every $x \in \hat{A}(T-t_0)$. We obtain a contradiction by proving that on the one hand $\int_{\hat{A}(T-t)}(u_x^{Tb})^2(T-t,y)dy$ converges to $0$ as $t \to 0$, but on the other hand this quantity is bounded away from $0$.}
\label{Fig1}
\end{figure}

Let $\gamma \in \hat{A}$. Computing as in \eqref{eq_ddtv2} we obtain
\begin{equation*}
\frac{d}{ds} v^2M_s' = 2v\dot{v}M_s' + v^2 (vM_s') = vM_s'(2u^2-v^2 -2P+v^2) = 2vM_s'(u^2-P), 
\end{equation*}
where $u=u^{Tb}(s,\gamma(s))$, $v = u^{Tb}_x(s,\gamma(s))$, $P=P^{Tb}(s,\gamma(s))$ and $M_s' = e^{\int_0^s v(r)dr}$. 

Now, equation \eqref{Eq_PropLTb} in combination with \eqref{eq_Mbound} implies $\dot{v} \le 0$  and hence $v \le -M$ for $s \ge s_0:=T-t_0$. Thus, for $s>s_0$
\begin{equation*}
v^2(s) M_s' = v^2(s_0) M_{s_0}' e^{\int_{s_0}^s \frac {2(u^2-P)}{v}ds} 
\end{equation*}
which implies, by \eqref{eq_Mbound},
\begin{equation*}
e^{-T}\le \frac{v^2(s) M_s'}{v^2(s_0)M_{s_0}'}\le e^T.
\end{equation*}
Proposition \ref{Prop_ChangeOfV} yields, for $s > s_0$,

\begin{eqnarray*}
\int_{\hat{A}(s)} (u^{Tb}_x)^2(s,y)dy &=& \int_{\hat{A}} (u^{Tb}_x)^2(s,\gamma(s))M_s'(\gamma)d\gamma \\ &\le& e^T \int_{\hat{A}} (u^{Tb}_x)^2(s_0,\gamma(s_0))M_{s_0}'(\gamma)d\gamma = \int_{\hat{A}(s_0)} (u^{Tb}_x)^2(s_0,y)dy
\end{eqnarray*}
and, similarly,
$$\int_{\hat{A}(s)} (u^{Tb}_x)^2(s,y)dy  \ge e^{-T} \int_{\hat{A}(s_0)} (u^{Tb}_x)^2(s_0,y)dy.$$
Hence, for $t<t_0$,
\begin{equation*}
e^{-T} \le \frac {\int_{\hat{A}(T-t)} (u^{Tb}_x)^2(T-t,y)dy} {\int_{\hat{A}(T-t_0)} (u_x^{Tb})^2(T-t_0,y)dy} \le e^T.
\end{equation*}
Passing to the limit $t \to 0^+$ we obtain that
\begin{eqnarray*}
0=\lim_{t \to 0^+} \int_{\tilde{A}(T-t)} (u^{Tb}_x)^2(T-t,y)dy &\ge& \liminf_{t \to 0^+} \int_{\hat{A}(T-t)} (u^{Tb}_x)^2(T-t,y)dy\\ &\ge& e^{-T} \int_{\hat{A}(T-t_0)} (u^{Tb}_x)^2(T-t_0,y)dy \\ &\ge& e^{-T}M^2 \mathcal{L}^1(\hat{A}(T-t_0)) >0.
\end{eqnarray*}
This gives a contradiction. Hence, $\tilde{A}$ is a null set and, consequently, $\mathcal{L}^1(\mathbb{R} \backslash \Gamma)=0$, which concludes the proof of Part 2.

\begin{corollary}
\label{Cor_globally}
$S_T(T)$ is a full measure set also for $u$ -- a weak solution of \eqref{eq_WeakCH1}-\eqref{eq_WeakCH3} satisfying $u_x \le C$ globally. The proof requires only Part 1 of the Proof of Theorem \ref{Th_STT}.
\end{corollary}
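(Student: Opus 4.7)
The plan is to observe that the hypothesis $u_x \le C$ globally --- writing $C$ here for this bound, which may differ from the energy constant of \eqref{eq_defC} --- forces the set $\Gamma$ constructed in Part 1 of the proof of Theorem \ref{Th_STT} to cover $\mathbb{R}$ up to a null set. Consequently Part 2, in which dissipativity entered essentially through the blow-up bound \eqref{eq_TbDissip} on $u^{Tb}_x$, can be dispensed with entirely.

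To execute this I would first note that $u^{Tb}_x(t,x) = -u_x(T-t,x) \ge -C$ pointwise on $[0,T] \times \mathbb{R}$. Since $u^{Tb}(s,\cdot) \in H^1(\mathbb{R})$ is absolutely continuous for every $s$, the quotient $\omega^{\gamma,\kappa}(s)$ admits the integral representation
$$\omega^{\gamma,\kappa}(s) = \frac{1}{\kappa(s)-\gamma(s)} \int_{\gamma(s)}^{\kappa(s)} u^{Tb}_x(s,y)\,dy.$$
Using $u^{Tb}_x \ge -C$ and treating separately the cases $\kappa(s) > \gamma(s)$ and $\kappa(s) < \gamma(s)$ --- in the latter the sign of the integrated inequality reverses, but so does the sign of the denominator --- one obtains $\omega^{\gamma,\kappa}(s) \ge -C$ uniformly in $\gamma$, $\kappa$ and $s \in [0,T]$. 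Fixing any integer $K \ge C$ we conclude that every point of differentiability of $u^{Tb}(0,\cdot)$ belongs to $\Gamma^K$, so $\Gamma = \bigcup_K \Gamma^K$ is a full measure subset of $\mathbb{R}$.

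At this point Part 1 of the proof of Theorem \ref{Th_STT} can be invoked verbatim. Lemmas \ref{Lem_uniquebackforw}, \ref{Lem_forwards} and \ref{Lem_5757} are purely kinematic or measure-theoretic; Proposition \ref{Prop57new} uses only the finiteness of the energy constant of \eqref{eq_defC} together with the restriction $T < T_{max}$ of \eqref{eq_defTmax}. Neither the Oleinik inequality nor the weak energy condition appears anywhere in Part 1. Applying Part 1 produces a null set $Z^\Gamma \subset \Gamma$ with $\Gamma \setminus Z^\Gamma \subset S_T(T)$, and the fullness of $\Gamma$ established above then yields fullness of $S_T(T)$. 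I do not expect any genuine obstacle: the only nontrivial input, namely the \emph{good} bound on $\omega^{\gamma,\kappa}$ along backward characteristics, is immediate from the global assumption $u_x \le C$, and that is precisely the work that Part 2 had to do in the dissipative setting.
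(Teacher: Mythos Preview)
Your argument is correct and matches the paper's intended reasoning. The corollary carries no separate proof in the paper beyond the parenthetical remark that only Part~1 is needed; you have supplied exactly the missing observation, namely that the global bound $u_x \le C$ yields $u^{Tb}_x \ge -C$ on all of $[0,T]$, whence $\omega^{\gamma,\kappa}(s) \ge -C$ everywhere and $\Gamma$ is already full measure, so Part~2 becomes vacuous.
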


\section {\bf Theory of Bressan-Constantin and proof of Theorem \ref{th_DU}}
\label{Sec_MainProof}

\noindent{\bf Theory of Bressan-Constantin}

In \cite{BC2} A. Bressan and A. Constantin introduced a transformation of the space variable at time $t=0$, $\xi \mapsto \bar{y}(\xi)$ defined implicitly by
\begin{equation}
\label{eq_BCTrans}
\int_0^{\bar{y}(\xi)} (1+\bar{u}_x^2)dx =\xi,
\end{equation}
where $\bar{u}(\cdot)$ is the initial datum of the Camassa-Holm equation.
Using this new variable they proposed the following system of ODEs:
\begin{equation}
\label{eq_BCODE}
\begin{cases}
\frac {\partial \tilde{u}}{\partial t} = -\tilde{P}_x,\\
\frac {\partial \tilde{v}}{\partial t} = \begin{cases} 
(\tilde{u}^2-P)(1+\cos \tilde{v}) - \sin^2 \frac {\tilde{v}} 2 &\mbox{ if } \tilde{v} > -\pi\\
-1 &\mbox{ if } \tilde{v} \le -\pi
\end{cases}\\
\frac{\partial \tilde{q}}{\partial t}=
\begin{cases} 
(\tilde{u}^2+\frac 1 2 - \tilde{P}) \sin \tilde{v} \cdot \tilde{q} &\mbox{ if } \tilde{v} > -\pi\\
0 &\mbox{ if } \tilde{v} \le -\pi
\end{cases}
\end{cases}
\end{equation}
where $\tilde{u}=\tilde{u}(t,\xi), \tilde{v}=\tilde{v}(t,\xi), \tilde{q}=\tilde{q}(t,\xi)$ are unknown functions,
$\tilde{P}, \tilde{P}_x$ are given by
\begin{eqnarray}
\tilde{P}(\xi) &=& \frac  1 2 \int_{\{\tilde{v}(\xi')>-\pi\}} \exp\left\{-\left|\int_{\{s \in [\xi,\xi'], \tilde{v}(s)>-\pi\}} \cos^2\frac {\tilde{v}(s)}{2} \tilde{q}(s)\right|\right\} \nonumber\\
&&\left[\tilde{u}^2(\xi')\cos^2 \frac{\tilde{v}(\xi')}{2} + \frac 1 2 \sin^2 \frac {\tilde{v}(\xi')}{2} \right] \tilde{q}(\xi') d\xi', \label{eq_Pxidziura}\\
\tilde{P}_x(\xi) &=& \frac {1}{2} \left(\int_{\{\xi'>\xi, \tilde{v}(\xi')>-\pi\}} - \int_{\{\xi'<\xi, \tilde{v}(\xi')>-\pi\}}\right) \exp\left\{-\left|\int_{\{s \in [\xi,\xi'], \tilde{v}(s)>-\pi\}} \cos^2\frac {\tilde{v}(s)}{2} \tilde{q}(s)\right|\right\}\nonumber \\
&&\left[\tilde{u}^2(\xi')\cos^2 \frac{\tilde{v}(\xi')}{2} + \frac 1 2 \sin^2 \frac {\tilde{v}(\xi')}{2} \right] \tilde{q}(\xi') d\xi'\label{eq_Pxxidziura}
\end{eqnarray}
and the initial condition is of the form
\begin{equation*}
\begin{cases}
\tilde{u}(0,\xi) = \bar{u}(\bar{y}(\xi)),\\
\tilde{v}(0,\xi) = 2 \arctan(\bar{u}_x(\bar{y}(\xi))),\\
\tilde{q}(0,\xi) = 1.
\end{cases}
\end{equation*}

The solutions $(\tilde{u},\tilde{v},\tilde{q})$ of \eqref{eq_BCODE} allowed to construct dissipative solutions $u(t,x)$ of the Camassa-Holm equation using identities 
\begin{eqnarray*}
\tilde{u} = \tilde{u}(t,\xi)&=& u(t,y(t,\xi)),\\
\tilde{u}_x = \tilde{u}_x(t,\xi)&=& u_x(t,y(t,\xi)),\\
\tilde{v} &=& 2 \arctan \tilde{u}_x, \\
\tilde{q} &=& (1+\tilde{u}_x^2)\frac {\partial y}{\partial \xi} \\
\frac {\partial}{\partial t} y(t,\xi) &=& \tilde{u}(t,\xi),\\
y(0,\xi) &=& \bar{y}(\xi),
\end{eqnarray*}
which were used to derive \eqref{eq_BCODE}, see \cite{BC2} for details. The solutions thus obtained form a semigroup. It has not been known, however, whether this procedure is the only one which yields dissipative solutions and thus the question of uniqueness has remained open. In this section we will go in the opposite direction to \cite{BC2}, i.e. we will show that every dissipative solution $u(t,x)$ of \eqref{eq_WeakCH1}-\eqref{eq_WeakCH3} admits a unique representation $(\tilde{u},\tilde{v},\tilde{q})$, which solves \eqref{eq_BCODE}. This, due to local well-posedness of \eqref{eq_BCODE} proven in \cite{BC2}, will result in uniqueness of dissipative solutions.
\begin{remark}
Note that $\tilde{v}$ above has different meaning from the meaning of $v$ in the preceding sections; the reason for this coincidence is that we wanted to keep the notations both from \cite{BC2} and \cite{CHGJ}. As in this section we will not, however, use the meaning of $v$ from preceding sections of the present paper, this should not lead to confusion. 
\end{remark}

\begin{remark}
The local well-posedness of \eqref{eq_BCODE} was obtained in \cite{BC2} by rewriting the system \eqref{eq_BCODE} as 

\begin{eqnarray}
\frac {\partial}{\partial t} U(t,\xi) &=& F(U(t,\xi)) + G(\xi,U(t,\cdot)), \label{eq_UU1}\\
U(0,\xi) &=& \bar{U}(\xi),\label{eq_UU2} 
\end{eqnarray}
where $U=(\tilde{u},\tilde{v},\tilde{q}) \in \mathbb{R}^3$ and $F,G$ are defined accordingly (see formulas (4.1)-(4.4) in \cite{BC2}) and considering it as an ordinary differential equation on the space $L^{\infty}(\mathbb{R};\mathbb{R}^3)$. Local existence and uniqueness follows by considering a contraction $\mathcal{P}: \mathcal{D}\to \mathcal{D}$ on the domain $\mathcal{D} \subset C([0,T],L^{\infty}(\mathbb{R},\mathbb{R}^3))$ given by
\begin{equation*}
(\mathcal{P}(U))(t,\xi):= \bar{U} + \int_0^t [F(U(\tau,\xi))+G(\xi,U(\tau,\cdot))]d\tau.
\end{equation*}
The domain $\mathcal{D}$ consists of all continuous mappings $t \mapsto U(t)=(\tilde{u}(t),\tilde{v}(t),\tilde{q}(t))$ from $[0,T]$ into $L^{\infty}(\mathbb{R},\mathbb{R}^3)$ satisfying:
\begin{eqnarray*}
U(0)&=&\bar{U},\\
\|U(t)-U(s)\|_{L^{\infty}} &\le& 2\kappa^* |t-s|, \\
\tilde{v}(t,\xi)-\tilde{v}(s,\xi) &\le& -\frac {t-s}{2} \qquad \xi \in \Omega^{\delta}, 0\le s <t \le T,
\end{eqnarray*}
where $\kappa^*$ is a constant dependent on the apriori estimates of the system \eqref{eq_UU1}-\eqref{eq_UU2}, $\Omega^{\delta} := \{\xi \in \mathbb{R}; \bar{v}(\xi) \in (-\pi,\delta-\pi]\}$ and $\delta$ is so small that the Lebesgue measure of $\Omega^\delta$ is small compared to constants related to $F,G$ and $v(\xi) \in (-\pi,\delta-\pi]$ implies $\frac {\partial} {\partial t} v(t,\xi) \le -1/2$. The main difficulty, overcome in \cite{BC2}, lies in the discontinuity of the right-hand side, see also \cite{BSh} for a general theory.
\end{remark}

\noindent{\bf Proof of Theorem \ref{th_DU}.}
The proof is divided into 3 parts:
\begin{itemize}
\item In Part 1, based on an arbitrary dissipative weak solution of \eqref{eq_WeakCH1}-\eqref{eq_WeakCH3} we define a triple $(\tilde{u},\tilde{v},\tilde{q})$, which is supposed to satisfy \eqref{eq_BCODE}.
\item In Part 2 we show that the triple $(\tilde{u},\tilde{v},\tilde{q})$ defined in Part 1 indeed satisfies \eqref{eq_BCODE}.
\item In Part 3, using the well-posedness result from \cite{BC2}, we show that this representation is unique and dependent only on the initial data. This, in combination with Theorem \ref{Th_STT}, leads directly to uniqueness of dissipative solutions.
\end{itemize}

{\bf Part 1.} Let us fix $u(t,x)$ -- arbitrary dissipative weak solution of \eqref{eq_WeakCH1}-\eqref{eq_WeakCH3} and $T<T_{max}$ -- small enough, see \eqref{eq_defTmax}. Then, 
by Proposition \ref{Prop_MainCH}, the equation 
\begin{equation}
\label{eq_uxdot}
\dot{u_x}(t) = u^2(t) - \frac 1 2 u_x^2(t) - P
\end{equation}
is satisfied on $[0,T]$ for every $\zeta \in S_T$, where $u_x(t)=u_x(t,\zeta(t))$, $u(t)=u(t,\zeta(t))$ and
$$P=P(t,\zeta(t))= \frac 1 2 \int_{-\infty}^{\infty} e^{-|\zeta(t)-x|} \left(u^2(t,x) + \frac 1 2 u_x^2(t,x)\right)dx.$$
Clearly, the equation
\begin{equation*}
\dot{u}(t) = -P_x,
\end{equation*}
where 
$$P_x=P_x(t,\zeta(t))= \frac 1 2 \left(\int_{\zeta(t)}^{\infty} - \int_{-\infty}^{\zeta(t)}\right) e^{-|\zeta(t)-x|} \left(u^2(t,x) + \frac 1 2 u_x^2(t,x)\right)dx$$
is also satisfied on $[0,T]$ as it is satisfied along every characteristic by Proposition \ref{Prop_Daf}. 

Before we proceed, let us note that dissipative solutions benefit from unique characteristics.
\begin{lemma}
\label{Lem_charunique}
Let $u$ be a dissipative weak solution of \eqref{eq_WeakCH1}-\eqref{eq_WeakCH3}. Then for every $\zeta \in \mathbb{R}$ there exists a unique (forwards) characteristic emanating from $\zeta$.
\end{lemma}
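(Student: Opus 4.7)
The plan is to argue by contradiction, combining the full-measure representation from Theorem~\ref{Th_STT} (or its extension Corollary~\ref{Cor_globally}) with the backward uniqueness of characteristics emanating from $S_{t_0}$. Suppose $\zeta_0 \in \mathbb{R}$ admits two distinct characteristics; then the leftmost $\zeta_0^l(\cdot)$ and rightmost $\zeta_0^r(\cdot)$ characteristics from $\zeta_0$ satisfy $\zeta_0^l(t_0) < \zeta_0^r(t_0)$ for some $t_0 > 0$. Passing if necessary to the time-shifted solution $\tilde u(t,x) := u(t_1 + t, x)$, where $t_1$ is the first instant at which $\zeta_0^l$ and $\zeta_0^r$ separate, reduces us to the situation in which the two characteristics diverge immediately. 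The shifted solution is either dissipative (if $t_1 = 0$) or, in view of the Oleinik condition, has $u_x$ globally bounded (if $t_1 > 0$), so that Theorem~\ref{Th_STT} or Corollary~\ref{Cor_globally} applies and makes the analogous set $\tilde S_{t_0}(t_0)$ a full-measure subset of $\mathbb{R}$ for an appropriately chosen $t_0 > 0$.

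The nonempty open interval $I_{t_0} := (\zeta_0^l(t_0), \zeta_0^r(t_0))$ therefore meets $\tilde S_{t_0}(t_0)$ in a set of positive measure; pick $y$ in that intersection and let $\xi \in \tilde S_{t_0}$ be the unique point with $\xi(t_0) = y$ (unique by the injectivity of $\xi \mapsto \xi(t_0)$, itself a consequence of backward uniqueness). Proposition~\ref{Prop_MainCH} tells us that $\xi(\cdot)$ is unique both forwards and backwards on $[0, t_0]$. The aim is now to force $\xi = \zeta_0$, which will itself yield a contradiction: it would mean that $\zeta_0 = \xi \in \tilde S_{t_0}$, and hence admits a unique forwards characteristic, clashing with the assumed splitting of $\zeta_0^l$ and $\zeta_0^r$.

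The crux is a concatenation step of the type already used in the paper. Treat the case $\xi < \zeta_0$ (the case $\xi > \zeta_0$ is symmetric, with $\zeta_0^r$ in place of $\zeta_0^l$; the case $\xi = \zeta_0$ gives the contradiction directly). The continuous function $s \mapsto \xi(s) - \zeta_0^l(s)$ is negative at $s = 0$ and positive at $s = t_0$, so it vanishes at some $s^* \in (0, t_0)$. Set
\[\bar\xi(s) := \zeta_0^l(s) \text{ for } s \in [0, s^*], \qquad \bar\xi(s) := \xi(s) \text{ for } s \in [s^*, t_0].\]
Continuity of $u$ and $P_x$, combined with the spatial matching $\zeta_0^l(s^*) = \xi(s^*)$, ensure that $\bar\xi(\cdot)$ satisfies all three defining conditions of Proposition~\ref{Prop_Daf} on the whole of $[0, t_0]$, and is therefore a characteristic of $u$ from $\zeta_0$ reaching $y$. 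Backward uniqueness of $\xi(\cdot)$ then forces $\bar\xi \equiv \xi$ on $[0, t_0]$, so $\zeta_0 = \bar\xi(0) = \xi(0) = \xi$, contradicting $\xi < \zeta_0$.

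The main obstacle I expect is the verification at the splicing time $s^*$: one must check that the one-sided derivatives of $\bar\xi$ and of $s \mapsto u(s, \bar\xi(s))$ agree there, so that the ODE system of Proposition~\ref{Prop_Daf} continues to hold across $s^*$. This follows from the coincidence $\zeta_0^l(s^*) = \xi(s^*)$, since then both $u(s^*, \cdot)$ and $-P_x(s^*, \cdot)$ take common values on the two branches. Once this splicing is justified, the rest is a clean combination of the full-measure statement for $\tilde S_{t_0}(t_0)$ with the forward/backward uniqueness supplied by Proposition~\ref{Prop_MainCH}.
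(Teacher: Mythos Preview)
Your proof is correct and follows essentially the same approach as the paper: the paper's proof assumes $\zeta^l(t)<\zeta^r(t)$, observes that the interval $[\zeta^l(t),\zeta^r(t)]$ must miss the full-measure set $S_t(t)$ (implicitly via the same concatenation/backward-uniqueness argument you spell out), obtains a contradiction, and then iterates from arbitrary initial times to go global. Your version is more explicit about the concatenation step and handles the globalization by an upfront time shift to the first separation instant rather than by iteration, but these are stylistic differences rather than a different strategy.
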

\begin{proof}
Let $\zeta^l(\cdot)$ and $\zeta^r(\cdot)$ be the leftmost and rightmost, respectively, characteristics emanating from $\zeta$. Suppose $\zeta^l(t) < \zeta^r(t)$. Then $\zeta \notin S_t$. By Theorem \ref{Th_STT} we have $\mathcal{L}^1([\zeta^l(t),\zeta^r(t)])=0$, which gives contradiction. Hence, $\zeta^l(t)=\zeta^r(t)$ for $t$ small enough. Since this reasoning works for arbitrary initial time $t_0$, we conclude that $\zeta^l(\cdot)=\zeta^r(\cdot)$ globally.
\end{proof}
Denote 
\begin{equation*}
y(t,\xi) := \bar{y}(\xi)(t),
\end{equation*}
where $\bar{y}(\xi)(\cdot)$ is the unique forwards characteristic of $u(t,x)$ emanating from $\bar{y}(\xi)$, i.e. $\bar{y}(\xi)(\cdot)$ satisfies $\bar{y}(\xi)(0) = \bar{y}(\xi)$ and $\frac {d}{dt} \bar{y}(\xi)(t) = u(t,\bar{y}(\xi)(t))$. Next we define $(\tilde{u}(t,\xi),\tilde{v}(t,\xi),\tilde{q}(t,\xi))$ for $(t,\xi) \in [0,T]\times \mathbb{R}$ in terms of our dissipative solution $u(t,x)$. The definition depends on whether $(t,\xi)$ satisfies $\bar{y}(\xi) \in S_t$.

Suppose first that $(t,\xi)$ is such that $\bar{y}(\xi) \in S_t$. Then we set
\begin{eqnarray}
\tilde{u}(t,\xi)&:=&  u(t,\bar{y}(\xi)(t)),\label{eq_uuu}\\
\tilde{v}(t,\xi)&:=& 2\arctan(u_x(t,\bar{y}(\xi)(t))),\label{eq_vvv}\\
\tilde{q}(t,\xi)&:=&  e^{\int_0^t u_x(s,\bar{y}(\xi)(s))ds} \frac {1+u_x^2(t,\bar{y}(\xi)(t))}{1+{u}_x^2(0,\bar{y}(\xi))}\label{eq_qqq}.
\end{eqnarray}
Note that we have $\tilde{u}(0,\xi) = u(0,\bar{y}(\xi)), \tilde{v}(0,\xi)=2\arctan(u_x(0,\bar{y}(\xi))), \tilde{q}(0,\xi)=1.$
\begin{remark}
\label{Rem_cont}
The functions $t \mapsto \tilde{v}(t,\xi)$, $t \mapsto \tilde{q}(t,\xi)$, defined in \eqref{eq_uuu}-\eqref{eq_qqq} may not be continuous due to the fact that $u_x(t,\bar{y}(\xi)(t))$ might not be a limit of difference quotients for every $t$. However, $u_x,\tilde{v},\tilde{q}$ can be, for every $\xi$, modified on a set of $t$ of measure $0$, so that they become continuous and it is this modification that we will be using. Note also that for every fixed $t$ the set of $\bar{y}(\xi)(t)$ such that this modification is necessary has Lebesgue measure $0$, compare Remark \ref{Rem_Modification}.
\end{remark}

\begin{remark} Due to Theorem \ref{Th_STT} we have that $S_t(t)$ is a set of full Lebesgue measure for every $t>0$. Thus, although for fixed $t$ the set $\{\xi: \bar{y}(\xi) \in S_t\}$ may not be of full measure (this happens, for instance, when characteristics collide before time $t$), certainly the set $\{\bar{y}(\xi)(t): \bar{y}(\xi)\in S_t\}$ has full Lebesgue measure. 
\end{remark}

Next, consider $(t,\xi) \in [0,T]\times \mathbb{R}$ such that $\bar{y}(\xi) \notin S_t.$ For any $\zeta \in \mathbb{R}$ define the breaking time $T_{br}(\zeta)$ by 
$$T_{br}(\zeta):= \sup\{T>0: \zeta\in S_T\}.$$ 
Similarly, define $T_{br}(\xi)$ by
$$T_{br}(\xi):= T_{br}(\bar{y}(\xi)) = \sup\{T>0: \bar{y}(\xi)\in S_T\}.$$ 
\begin{proposition}
\label{Prop_Tbr}
$T_{br}$ has the following properties
\begin{enumerate}[i)]
\item $T_{br}(\zeta)>0$ for almost every $\zeta \in \mathbb{R}$ and $T_{br}(\xi)>0$ for almost every $\xi \in \mathbb{R},$
\item If $T_{br}(\xi)<\infty$ then   $\lim_{t \uparrow T_{br}(\xi)} (\tilde{u},\tilde{v},\tilde{q})(t,\xi)$ exists. 
\item For fixed $T$ we have $\mathcal{L}^1(\{\zeta: 0<T_{br}(\zeta)\le T, \lim_{t \uparrow T_{br}(\zeta)} u_x(t,\zeta(t))\neq -\infty\})=0$ and, similarly, $\mathcal{L}^1(\{\xi: T_{br}(\xi)\le T, \lim_{t \uparrow T_{br}(\xi)} u_x(t,\xi)\neq -\infty\})=0.$
\end{enumerate}
\end{proposition}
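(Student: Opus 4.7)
Part (i) is an immediate consequence of Proposition \ref{Prop_MainCH}: by definition $\{\zeta : T_{br}(\zeta) = 0\} \subseteq \mathbb{R} \setminus \bigcup_{T>0} S_T$, which has Lebesgue measure zero. For the $\xi$-version, the map $\bar y$ given by \eqref{eq_BCTrans} is $1$-Lipschitz since $\bar y'(\xi) = (1+\bar u_x^2(\bar y(\xi)))^{-1} \leq 1$, while its inverse $F(y) := \int_0^y (1+\bar u_x^2)\,dx$ is absolutely continuous; both maps therefore satisfy the Luzin $N$ property, and $\{\xi : T_{br}(\xi) = 0\} = F(\{\zeta : T_{br}(\zeta) = 0\})$ is Lebesgue-null.

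For (ii), Proposition \ref{Prop_MainCH} furnishes on $[0, T_{br}(\xi))$ the ODEs $\dot u = -P_x$ and $\dot v = u^2 - v^2/2 - P$ along $\bar y(\xi)(\cdot)$, with $v(t) := u_x(t, \bar y(\xi)(t))$. Boundedness of $P_x$ makes $\tilde u$ Lipschitz in $t$, so $\lim \tilde u$ exists. For $v$, the Oleinik bound caps it from above; picking $M_0 > 0$ with $M_0^2 > 2\sup(u^2) + 2\sup(P)$, the ODE forces $\dot v \leq -v^2/4$ whenever $v \leq -M_0$, and standard ODE comparison yields finite-time blow-up $v \to -\infty$ in time at most $4/|v|$. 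This produces the dichotomy: either $v$ stays bounded on $[0, T_{br}(\xi))$ and extends continuously (so $\tilde v, \tilde q$ extend directly via \eqref{eq_vvv}--\eqref{eq_qqq}), or $v \to -\infty$ (so $\tilde v = 2\arctan v \to -\pi$). For $\tilde q = M_t'(1+v^2)/(1+v_0^2)$ in the latter case I would compute
\begin{equation*}
\tfrac{d}{dt}\bigl(M_t'(1+v^2)\bigr) = M_t'\, v\,(1 + 2u^2 - 2P),
\end{equation*}
and use the dominant-balance asymptotics $v \sim -2/(T_{br}(\xi)-t)$ and $M_t' \sim c(T_{br}(\xi)-t)^2$ forced by the ODE to get $|M_t' v| \leq C(T_{br}(\xi)-t)$, which is integrable up to $T_{br}(\xi)$; this yields $\lim \tilde q$.

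For (iii), by the dichotomy of (ii) the set in the statement coincides, up to a Lebesgue-null set, with $\bigcup_K B_K$, where
\begin{equation*}
B_K := \{\zeta : 0 < T_{br}(\zeta) \leq T,\ v(t) \geq -K \text{ for all } t \in [0, T_{br}(\zeta))\}.
\end{equation*}
I plan to show $\mathcal{L}^1(B_K) = 0$ for every $K$ by contradiction. For $\zeta \in B_K$ the ODE extends $v$ continuously to $T_{br}(\zeta)$, and Proposition \ref{Prop_ChangeOfV} confines the Jacobian $M_t'(\zeta)$ to $[e^{-KT}, e^{KT}]$. The target is to show that a.e.\ such $\zeta$ actually lies in $S_{T_{br}(\zeta)+\delta}$ for some $\delta > 0$, which contradicts the definition of $T_{br}(\zeta)$. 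This reduces to bounding the difference quotient $\omega$ along characteristics uniformly for $\eta$ near $\zeta$ and $s \in [0, T_{br}(\zeta)+\delta]$, which I plan to accomplish by combining Lebesgue density on $B_K$ with the forward version of the $\omega$-ODE $\dot\omega \geq -\omega^2 - LC$ (the direct analog of Lemma \ref{LemA45}) and an energy-type integration against $M_t'$ in the spirit of Part~2 of the proof of Theorem \ref{Th_STT}.

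The main obstacle is this last step of (iii). Pointwise boundedness of $v$ along $\zeta(\cdot)$ does not a priori control the interval average $\omega(s) = (\eta(s)-\zeta(s))^{-1}\int_{\zeta(s)}^{\eta(s)} u_x(s,y)\,dy$, because an arbitrarily small measure of neighboring characteristics on which $u_x \to -\infty$ can still make the average diverge. I plan to overcome this by exploiting the dichotomy of (ii) quantitatively on the complement of $B_K$, combined with the $\omega$-comparison ODE and the $L^2$-energy bound $\|u_x(t,\cdot)\|_{L^2} \leq \|u_0\|_{H^1}$, essentially running the contradiction-with-Lemma-\ref{Lem_limux2} machinery from Part~2 of the proof of Theorem \ref{Th_STT} in this forward setting.
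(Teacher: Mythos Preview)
Parts (i) and (ii) are essentially correct. For (ii), your asymptotic computation for $\tilde q$ is more laborious than necessary: from the identity $\tfrac{d}{dt}\bigl(M_t'(1+v^2)\bigr)=M_t'\,v\,(1+2u^2-2P)$ you can read off directly that $\partial_t\log\tilde q = \tfrac{v}{1+v^2}(1+2u^2-2P)$, which is uniformly bounded (by $\tfrac12(1+2\sup u^2+2\sup P)$) irrespective of whether $v$ blows up. This is how the paper handles it, and it avoids any asymptotic analysis.

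Part (iii) has a genuine gap. You correctly isolate the obstacle: pointwise control of $v$ along $\zeta(\cdot)$ does not control the interval average $\omega$, and neither Lebesgue density of $B_K$ nor the energy bound closes this. Your proposed workaround---``run the Part~2 machinery of Theorem~\ref{Th_STT} in the forward setting''---does not apply: that machinery starts from a set $B$ \emph{already known} to have measure zero and uses Lemmas~\ref{Lem_limux2}--\ref{Lem_L1Bt} (which rest on the weak energy condition) to derive a contradiction from positivity of a related set. Here the roles are reversed: you are trying to \emph{prove} $\mathcal L^1(B_K)=0$, so that argument has no foothold.

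The paper's route is different and you are missing its key idea: an \emph{iterative time-shift}. One first uses only the information $u_x(0,\zeta)>-M$ (which holds for $\zeta\in A^M:=B_M$) together with the $\omega$-ODE $\dot\omega\ge -\omega^2-LC$ to conclude $\zeta\in I_{\Delta^M}$, where $\Delta^M:=\Omega^{-1}(-M)$ and $\Omega(t)=\sqrt{LC}\tan(\sqrt{LC}\,t-\pi/2)$ is the comparison solution. Lemmas~5.7 and~5.9 of \cite{CHGJ} then give $I_{\Delta^M}\subset S_{\Delta^M}$ up to a null set, so $T_{br}(\zeta)\ge\Delta^M$ for a.e.\ $\zeta\in A^M$. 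This is only a \emph{fixed} gain $\Delta^M$, not yet a contradiction. The crucial second step is to shift the initial time to $t_0=\tfrac34\Delta^M$: since $u_x(t_0,\zeta(t_0))>-M$ still holds, the same argument applied to $u^{t_0}(t,x):=u(t+t_0,x)$ yields (via Lemma~\ref{Lem_610} and the change-of-variables bound on $M_{t_0}'$) that $T_{br}(\zeta)\ge \tfrac74\Delta^M$ for a.e.\ $\zeta\in A^M$. Iterating pushes $T_{br}$ past $T$, contradicting $T_{br}(\zeta)\le T$. Your proposal attempts to jump directly to $\zeta\in S_{T_{br}(\zeta)+\delta}$ in one shot, which the available tools do not support.
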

\begin{proof}
Property i) follows immediately from Proposition \ref{Prop_MainCH} and the change of variables \eqref{eq_BCTrans}. To prove ii) observe that:
\begin{itemize}
\item $u$ is continuous,
\item $u_x$ satisfies the differential  equation \eqref{eq_uxdot} on $[0,T_{br}(\xi))$ and hence,  compare Remark \ref{Rem_cont}, there exists the limit $\lim_{t \uparrow T_{br}(\xi)} u_x(t,\bar{y}(\xi)(t))$ (which may be equal $-\infty$). Thus the limit $\lim_{t \uparrow T_{br}(\xi)} \tilde{v}(t,\xi)$ exists (possibly equal $-\pi$),
\item $\tilde{q}$ satisfies the differential equation
\begin{equation*}
\partial_t \tilde{q} (t,\xi) = \frac {\tilde{u}_x(t,\xi)}{1+\tilde{u}^2_x(t,\xi)} (1+2\tilde{u}^2(t,\xi) - 2P(t,\bar{y}(\xi)(t))) \tilde{q}(t,\xi),
\end{equation*}
see derivation \eqref{eq_q} below; hence $\frac \partial {\partial t}{\log(\tilde{q})}$ is bounded and the limit $\lim_{t \uparrow T_{br}(\xi)} \tilde{q}(t,\xi)$ exists; moreover, since $\tilde{q}(0,\xi)=1$, there exists a global constant $\tilde{C}$ such that 
\begin{equation}
\label{eq_C1C}
1/\tilde{C}<\tilde{q}(t,\xi)<\tilde{C}
\end{equation}
for all $t \in [0,T_{br}(\xi))$.
\end{itemize}
To prove iii) let us consider the following sets:
\begin{equation*}
A^M:= \{\zeta: T_{br}(\zeta) \le T, \forall_{t \in [0,T_{br}(\zeta))} u_x(t,\zeta(t))> -M\}.
\end{equation*}
Observe that by ii)
\begin{equation*}
\bigcup_{M=1}^{\infty} A^M = \{\zeta: T_{br}(\zeta)\le T, \lim_{t \uparrow T_{br}(\zeta)} u_x(t,\zeta(t))\neq -\infty\}.
\end{equation*}
We will show that $\mathcal{L}^1(A^M)=0$ for every fixed $M>0$. For this purpose, let us first recall some definitions and results from \cite{CHGJ}.
\begin{definition}[Definition 5.5 from \cite{CHGJ}]
\label{def_6755}
For $0\le t < T_{max}$, where $T_{max}$ is given by \eqref{eq_defTmax}, we define
\begin{eqnarray*}
\Omega(t)&:=& \sqrt{LC} \tan(\sqrt{LC}t - \pi/2), \\
I_t &:=& \left\{\zeta \in \mathbb{R}: u_x(0,\zeta)=\lim_{h\to 0} \frac {u(0,\zeta+h) - u(0,\zeta)}{h}, u_x(0,\zeta)>\Omega(t) \right\},\\
I_t^{unique,N} &:=& \{\zeta \in I_t: \forall_{\eta \in (\zeta-1/N,\zeta)\cup (\zeta,\zeta+1/N), s \in [0,t)} -N \le \omega(s)\le N\}, \\
I_t^{unique} &:=& \bigcup_{N=1}^\infty I_t^{unique,N},
\end{eqnarray*}
where $C$ is defined by \eqref{eq_defC}, $L$ can be chosen $1$ for the Camassa-Holm equation, compare Proof of Lemma \ref{LemA45}, and $\omega(s) = \frac {u(s,\eta(s))-u(s,\zeta(s)}{\eta(s)-\zeta(s)}$ with $\eta(\cdot)$ any characteristic of $u$ emanating from $\eta$. 
\end{definition}
\begin{lemma}[Lemma 5.7 from \cite{CHGJ}]
\label{Lem_57chgj}
For $t<T_{max}$ the set $I_t \backslash I_t^{unique}$ has Lebesgue measure $0$.
\end{lemma}
\begin{lemma}[Lemma 5.9 from \cite{CHGJ}]\label{Lem_59chgj}
 For 
$t<T_{max}$ the set $I_t^{unique} \backslash L_t^{unique}$ has Lebesgue measure $0$, where $L_t^{unique}$ is given by Definition \ref{Def_Lt}.
\end{lemma}

Now, Definition \ref{def_6755} implies $A^M \subset I_{\Delta^M}$, where $\Delta^M := \Omega^{-1}(-M)$. Hence by Lemma \ref{Lem_57chgj} we obtain that almost every $\zeta \in A^M$ belongs in fact to $I_{\Delta^M}^{unique}$ and hence almost every $\zeta \in A^M$ belongs, by Lemma \ref{Lem_59chgj} and Proposition \ref{Prop_MainCH}, to $S_{\Delta^M}$. This 
implies $T_{br}(\zeta)\ge \Delta^M$ for  every $\zeta \in A_M \cap S_{\Delta_M}$. Consequently, 
$T_{br}(\zeta) \ge \Delta_M$ for almost every $\zeta \in A_M$.

 Consider now initial time $t_0=\frac 3 4\Delta^M$ and shifted in time weak solution of the Camassa-Holm equation, $$u^{t_0}(t,x):=u(t+t_0,x).$$ Repeating the reasoning above, we show  that $T^{t_0}_{br}(\tilde{\zeta})\ge \Delta^M$ for a.e. $\tilde{\zeta} \in (A_M \cap S_{\Delta^M})(\frac 3 4 \Delta^M)$, where $(A_M \cap S_{\Delta^M})(\frac 3 4 \Delta^M)$ is the thick pushforward, see Definition \ref{Def_Thick}, and $T_{br}^{t_0}$ is $T_{br}$ related to $u^{t_0}$.  Denote $$B:=(A_M\cap S_{\Delta^M})(3/4 \Delta^M) \cap\{\tilde{\zeta} : T_{br}^{t_0}(\tilde{\zeta})<\Delta^M\}.$$ Then, using Proposition \ref{Prop_ChangeOfV},
\begin{eqnarray*}
0=\mathcal{L}^1(B) &=& \int_{(A_M \cap S_{\Delta^M})\cap \{\zeta: T_{br}^{t_0}(\zeta(\frac 3 4\Delta^M)) < \Delta^M\}} e^{\int_0^{\frac 3 4 \Delta^M} u_x(s,\zeta(s))ds}d\zeta\\ &\ge& e^{-\frac 3 4 \Delta^M M} \mathcal{L}^1((A_M \cap S_{\Delta^M})\cap \{\zeta: T_{br}^{t_0}(\zeta(3/4\Delta^M)) < \Delta^M\}).
\end{eqnarray*}
Hence,
$$\mathcal{L}^1((A_M \cap S_{\Delta^M})\cap \{\zeta: T_{br}^{t_0}(\zeta(3/4\Delta^M)) < \Delta^M\}) = 0.$$
Consequently, using Lemma \ref{Lem_610} below, $T_{br}(\zeta)\ge 1 \frac 3 4\Delta^M$ for a.e. $\zeta \in A_M$. 

In the next step, we show simiarly that $T_{br}(\zeta)\ge 2 \frac 1 2\Delta^M$ for a.e. $\zeta \in A_M$  and iterating this construction, we conclude.
\end{proof}
\begin{lemma}
\label{Lem_610}
Let $u$ be a weak solution of \eqref{eq_WeakCH1}-\eqref{eq_WeakCH3}. If $\zeta \in S_{t_0}$ and $\zeta(T) \in S_T^{t_0}$, where $S_T^{t_0}$ is the set $S_T$ corresponding to the shifted in time solution $u^{t_0}(t,x):=u(t+t_0,x)$, then $\zeta \in S_{t_0+T}$ (except possibly for a set of $\zeta$ of measure $0$).
\end{lemma}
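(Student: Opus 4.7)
\textbf{Proof proposal for Lemma \ref{Lem_610}.} The plan is to verify directly that $\zeta$ meets the membership conditions for $L_{t_0+T}^{unique,N}$ for a sufficiently large $N$, by concatenating the data available on $[0,t_0]$ from $\zeta\in S_{t_0}$ with the data available on $[t_0, t_0+T]$ coming from $\zeta(t_0)\in S_T^{t_0}$ applied to the time-shifted solution $u^{t_0}(t,x):=u(t+t_0,x)$. The exceptional null set $Z_{t_0+T}$ from $S_{t_0+T}=L_{t_0+T}^{unique}\setminus Z_{t_0+T}$ accounts for the ``except possibly a set of measure $0$'' allowance in the statement, so it is enough to prove $\zeta\in L_{t_0+T}^{unique}$.

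First I would record two constants: since $\zeta\in S_{t_0}\subset L_{t_0}^{unique}$, there exists $N_1$ with $\zeta\in L_{t_0}^{unique,N_1}$, and since $\zeta(t_0)\in S_T^{t_0}\subset L_{T,t_0}^{unique}$ (the analogue built from $u^{t_0}$), there exists $N_2$ with $\zeta(t_0)\in L_{T,t_0}^{unique,N_2}$. Uniqueness forwards on the full interval is then immediate: $\zeta(\cdot)$ is unique forwards on $[0,t_0]$; by the one-to-one correspondence $\zeta^{t_0}(s)=\zeta(t_0+s)$ between characteristics of $u^{t_0}$ and time-translated characteristics of $u$, the characteristic is also unique forwards from $\zeta(t_0)$ on $[0,T]$, and a standard concatenation gives uniqueness forwards on $[0,t_0+T]$. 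The Lebesgue-point condition transfers in the same way, because $u^{t_0}_x(s,\cdot)=u_x(t_0+s,\cdot)$: the defect set on $[0,t_0]$ has measure zero by $\zeta\in L_{t_0}^{unique,N_1}$, the defect set on $[t_0,t_0+T]$ has measure zero by $\zeta(t_0)\in L_{T,t_0}^{unique,N_2}$, and the union is null.

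The central step is the bound on the difference quotient $\omega(s)$ on all of $[0,t_0+T]$. On $[0,t_0]$, for $\eta\in(\zeta-1/N_1,\zeta)\cup(\zeta,\zeta+1/N_1)$ and any characteristic $\eta(\cdot)$, we already have $|\omega(s)|\le N_1$, whence $|\eta(t_0)-\zeta(t_0)|\le e^{N_1 t_0}|\eta-\zeta|$. Pick $\delta\in(0,1/N_1]$ so small that $e^{N_1 t_0}\delta\le 1/N_2$; for $\eta\in(\zeta-\delta,\zeta)\cup(\zeta,\zeta+\delta)$ this forces $\eta(t_0)\in(\zeta(t_0)-1/N_2,\zeta(t_0)+1/N_2)$, and uniqueness backwards of $\zeta(t_0)$ (from $\zeta(t_0)\in S_T^{t_0}$) excludes the possibility $\eta(t_0)=\zeta(t_0)$. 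Then on $[t_0,t_0+T]$ the quotient $\omega(s)$ for $u$ coincides with $\omega^{\zeta(t_0),\eta(t_0)}(s-t_0)$ for $u^{t_0}$, so the membership $\zeta(t_0)\in L_{T,t_0}^{unique,N_2}$ yields $|\omega(s)|\le N_2$. Taking $N:=\max(N_1,N_2,\lceil 1/\delta\rceil)$ we obtain $\zeta\in L_{t_0+T}^{unique,N}\subset L_{t_0+T}^{unique}$, and discarding the (null) set $Z_{t_0+T}$ finishes the proof.

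The main obstacle I anticipate is purely bookkeeping rather than conceptual: one must track how a ``small at time $0$'' neighborhood in the definition of $L^{unique,N}$ propagates to a ``small at time $t_0$'' neighborhood required to invoke the definition for the shifted problem, while simultaneously making sure that neighboring characteristics do not collapse onto $\zeta(\cdot)$ at time $t_0$ (so that the difference quotient remains well-defined). Both issues are handled by the above exponential bound on separation and the uniqueness-backwards clause built into Proposition \ref{Prop_MainCH}.
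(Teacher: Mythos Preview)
Your proof is correct and follows essentially the same route as the paper's: fix $N_1$ with $\zeta\in L_{t_0}^{unique,N_1}$ and $N_2$ with $\zeta(t_0)\in L_{T}^{t_0,unique,N_2}$, then choose the neighborhood at time $0$ small enough that its image at time $t_0$ lands inside the $1/N_2$-neighborhood of $\zeta(t_0)$, and conclude $\zeta\in L_{t_0+T}^{unique,\hat N}$. The paper phrases the containment via leftmost/rightmost characteristics, $((\zeta-1/\hat N)^l(t_0),(\zeta+1/\hat N)^r(t_0))\subset(\zeta(t_0)-1/N_2,\zeta(t_0)+1/N_2)$, whereas you use the equivalent exponential separation bound; both achieve the same thing.

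One minor correction: when you exclude $\eta(t_0)=\zeta(t_0)$, the relevant fact is uniqueness \emph{backwards of $\zeta(\cdot)$ on $[0,t_0]$}, which comes from $\zeta\in S_{t_0}$ (Proposition~\ref{Prop_MainCH}), not from $\zeta(t_0)\in S_T^{t_0}$---the latter concerns the interval $[t_0,t_0+T]$.
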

\begin{proof}
If $\zeta \in S_{t_0}$ then $\zeta \in L_{t_0}^{unique,N}$ for some $N>0$. Similarly, $\zeta(t_0) \in L_T^{t_0,unique,\tilde{N}}$ for some $\tilde{N}$, where $L_T^{t_0,unique,\tilde{N}}$ is the set $L_T^{unique,\tilde{N}}$ corresponding to $u^{t_0}$. Let $\hat{N} \ge \max\{N,\tilde{N}\}$ be so large that $$((\zeta-1/\hat{N})^l(t_0),(\zeta+1/\hat{N})^r(t_0)) \subset (\zeta(t_0)-1/\tilde{N}, \zeta(t_0)+1/\tilde{N}).$$ Then $\zeta \in L_{t_0+T}^{unique,\hat{N}}$, by Definition \ref{Def_Lt}. Consequently, $\zeta \in L_{t_0+T}^{unique}$ and hence, by Proposition \ref{Prop_MainCH}, $\zeta \in S_{t_0+T}$ (up to a set of $\zeta$ of measure $0$).
\end{proof}

\begin{remark}
Proposition \ref{Prop_Tbr}iii) shows that the breaking of waves in dissipative solutions of the Camassa-Holm equation is only due (up to a set of measure $0$) to blow-up of $u_x$.
\end{remark}

Now, fix $\xi$. We define the triple $(\tilde{u},\tilde{v},\tilde{q})(t,\xi)$ for $t\ge T_{br}(\xi)$ (recall that for $t<T_{br}(\xi)$ it is already defined by \eqref{eq_uuu}-\eqref{eq_qqq}) as the solution of equations \eqref{eq_BCODE} with initial condition
\begin{equation}
\label{eq_IC1}
(\tilde{u},\tilde{v},\tilde{q})(T_{br}(\xi),\xi):=  \lim_{t \uparrow T_{br}(\xi)} (\tilde{u},\tilde{v},\tilde{q})(t,\xi), 
\end{equation}
if  $T_{br}(\xi)>0$ and
\begin{equation}
\label{eq_IC2}
(\tilde{u},\tilde{v},\tilde{q})(0,\xi):=(u(0,\bar{y}(\xi)),2\arctan(u_x(0,\bar{y}(\xi))),1)
\end{equation}
otherwise.
The limit in \eqref{eq_IC1} is well-defined by Proposition \ref{Prop_Tbr}. To solve the equations \eqref{eq_BCODE} we: 
\begin{enumerate}
\item Define for every $(t,\xi)$ 
\begin{eqnarray}
\tilde{P}(t,\xi):=P(t,y(t,\xi))&=&\frac 1 2 \int_{-\infty}^{\infty} e^{-|y(t,\xi)-x|} \left(u^2(t,x)+\frac 1 2 u_x^2(t,x)\right)dx,\label{eq_defPtxi}\\
\tilde{P}_x(t,\xi):=P_x(t,y(t,\xi))&=& \frac 1 2 \left(\int_{y(t,\xi)}^{\infty} - \int_{-\infty}^{y(t,\xi)}\right) \nonumber\\ &&\qquad\qquad e^{-|y(t,\xi)-x|} \left(u^2(t,x) + \frac 1 2 u_x^2(t,x)\right)dx.
\label{eq_defPxtxi}
\end{eqnarray}
\item Integrate in time, for every fixed $\xi$, the first equation of \eqref{eq_BCODE} for $\frac {\partial \tilde{u}}{\partial t}$ with initial condition \eqref{eq_IC1}-\eqref{eq_IC2}. This is possible, since $\tilde{P}_x$ has been defined in \eqref{eq_defPxtxi}.

\item For every fixed $\xi$ solve the second equation of \eqref{eq_BCODE} for $\frac {\partial \tilde{v}}{\partial t}$ with initial condition \eqref{eq_IC1}-\eqref{eq_IC2}. This is possible, since $\tilde{u}$ has been defined in the previous step, $\tilde{P}$ is defined by \eqref{eq_defPtxi}, and the right-hand side is Lipschitz in the variable $\tilde{v}$.
\item For every fixed $\xi$ solve the third equation of \eqref{eq_BCODE} for $\frac {\partial \tilde{q}}{\partial t}$  with initial condition \eqref{eq_IC1}-\eqref{eq_IC2}. This is possible, since $\tilde{P},\tilde{P}_x$ are well defined and $\tilde{u},\tilde{v}$ have already been defined in the previous steps.
\end{enumerate}

Thus, we have defined $(\tilde{u},\tilde{v},\tilde{q})$ for all $(t,\xi) \in [0,T] \times \mathbb{R}$, based on the arbitrary dissipative weak solution $u(t,x)$ of \eqref{eq_WeakCH1}-\eqref{eq_WeakCH3}. 

{\bf Part 2.} Now, we show that the triple $(\tilde{u},\tilde{v},\tilde{q})(t,\xi)$ defined in Part 1 satisfies the system of equations \eqref{eq_BCODE}. We begin by showing that $\tilde{P}(t,\xi),\tilde{P}_x(t,\xi)$ defined in \eqref{eq_defPtxi}-\eqref{eq_defPxtxi} coincide with formulas for $\tilde{P},\tilde{P}_x$ given in \eqref{eq_Pxidziura}-\eqref{eq_Pxxidziura}.

\begin{lemma}
\label{Lem_Pequivtilde}
$\tilde{P}(t,\xi)$ and $\tilde{P}_x(t,\xi)$ defined in \eqref{eq_defPtxi} and \eqref{eq_defPxtxi}, respectively, satisfy formulas \eqref{eq_Pxidziura},\eqref{eq_Pxxidziura}, respectively.
\end{lemma}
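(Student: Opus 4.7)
The plan is to prove both identities by performing the change of variables $x = y(t,\xi')$ in the integrals \eqref{eq_defPtxi} and \eqref{eq_defPxtxi}, and showing that under this substitution the integrand and the exponential weight transform exactly into the expressions appearing in \eqref{eq_Pxidziura} and \eqref{eq_Pxxidziura}. The basic point is that once we recognize the algebraic identities $\cos^2(\tilde v/2) = 1/(1+\tilde u_x^2)$ and $\sin^2(\tilde v/2) = \tilde u_x^2/(1+\tilde u_x^2)$ (valid wherever $\tilde v > -\pi$, equivalently wherever $\tilde u_x = \tan(\tilde v/2)$ is finite), together with $\partial y/\partial \xi' = \tilde q \cos^2(\tilde v/2)$ coming from the definition $\tilde q = (1+\tilde u_x^2)\,\partial y/\partial\xi'$, the substitution is essentially mechanical.

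More concretely, I would first establish the change-of-variables formula by invoking Proposition \ref{Prop_ChangeOfV} (or its dissipative-solution analogue for the map $\xi' \mapsto y(t,\xi')$). Since Theorem \ref{Th_STT} guarantees $S_t(t)$ has full measure, the image $\{y(t,\xi'): \bar y(\xi') \in S_t\}$ covers $\mathbb{R}$ up to a null set, so the $x$-integral can be rewritten as a $\xi'$-integral. On the ``good'' set where $\bar y(\xi') \in S_t$ one has $u(t, y(t,\xi')) = \tilde u(t,\xi')$ and $u_x(t,y(t,\xi')) = \tan(\tilde v(t,\xi')/2)$, so a direct calculation gives
\begin{equation*}
\left[u^2(t,x) + \tfrac{1}{2} u_x^2(t,x)\right] dx = \left[\tilde u^2(t,\xi')\cos^2\tfrac{\tilde v(t,\xi')}{2} + \tfrac{1}{2}\sin^2\tfrac{\tilde v(t,\xi')}{2}\right] \tilde q(t,\xi')\, d\xi',
\end{equation*}
which is exactly the integrand appearing in \eqref{eq_Pxidziura}--\eqref{eq_Pxxidziura}.

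Next, I would handle the exponential weight. Writing
\begin{equation*}
y(t,\xi') - y(t,\xi) = \int_{\xi}^{\xi'} \frac{\partial y}{\partial s}(t,s)\, ds
\end{equation*}
and noting that $\partial y/\partial s = \tilde q(t,s)\cos^2(\tilde v(t,s)/2)$ on the set $\{\tilde v > -\pi\}$, we obtain
\begin{equation*}
|y(t,\xi) - y(t,\xi')| = \left|\int_{\{s \in [\xi,\xi'], \tilde v(t,s) > -\pi\}} \cos^2\tfrac{\tilde v(t,s)}{2}\, \tilde q(t,s)\, ds\right|,
\end{equation*}
which matches precisely the exponent in \eqref{eq_Pxidziura}--\eqref{eq_Pxxidziura}. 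The restriction of the integration domain in $\xi'$ to $\{\tilde v(t,\xi') > -\pi\}$ is automatic: on the complementary set $\tilde u_x(t, y(t,\xi')) = -\infty$, which forces $\partial y/\partial \xi' = \tilde q \cos^2(\tilde v/2) = 0$ there, so such $\xi'$ contribute nothing to the $\xi'$-integral (alternatively, by Proposition \ref{Prop_Tbr}(iii) and Theorem \ref{Th_STT}, their image under $y(t,\cdot)$ has Lebesgue measure zero in $x$).

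The main obstacle will be justifying the change of variables cleanly in the presence of the ``collision/breaking'' set $\{\tilde v = -\pi\}$, where characteristics have already broken and the map $\xi' \mapsto y(t,\xi')$ fails to be a bijection onto its image. The cure is the key fact established in Theorem \ref{Th_STT} together with Lemma \ref{Lem_charunique}: at each time $t$, the image $\{y(t,\xi'): \tilde v(t,\xi') > -\pi\}$ is a full-measure subset of $\mathbb{R}$, so dropping the complementary set does not change any integral in $x$. Finally, identity for $\tilde P_x$ follows from the identity for $\tilde P$ by splitting $\int_{-\infty}^\infty = \int_{-\infty}^{y(t,\xi)} + \int_{y(t,\xi)}^\infty$ and flipping the sign of $\tfrac{1}{2}\mathrm{sgn}(y(t,\xi)-x)e^{-|y(t,\xi)-x|}$, which in the $\xi'$-variable corresponds exactly to the split $\{\xi' < \xi\}$ versus $\{\xi' > \xi\}$ (since $\xi' \mapsto y(t,\xi')$ is nondecreasing on the good set).
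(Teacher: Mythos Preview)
Your approach is essentially the same as the paper's: change variables $x = y(t,\xi')$ using Theorem~\ref{Th_STT} to restrict to $S_t(t)$, apply Proposition~\ref{Prop_ChangeOfV}, and use the algebraic identities $\cos^2(\tilde v/2) = 1/(1+\tilde u_x^2)$, $\sin^2(\tilde v/2) = \tilde u_x^2/(1+\tilde u_x^2)$ to recover the integrand of \eqref{eq_Pxidziura}. The treatment of the exponential weight and of $\tilde P_x$ is also the same.

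There is, however, a gap in your justification of the domain restriction. After the change of variables you obtain an integral over $\{\xi':\bar y(\xi')\in S_t\}$, and you must show it equals the integral over $\{\xi':\tilde v(t,\xi')>-\pi\}$. Since $\{\bar y(\xi')\in S_t\}\subset\{\tilde v(t,\xi')>-\pi\}$, the set you actually need to dispose of is
\[
A_2:=\{\xi':\bar y(\xi')\notin S_t\}\cap\{\xi':\tilde v(t,\xi')>-\pi\},
\]
not the complement $\{\tilde v\le -\pi\}$ you discuss. Your claim that ``such $\xi'$ contribute nothing because $\partial y/\partial\xi'=\tilde q\cos^2(\tilde v/2)=0$'' is aimed at the wrong set; moreover, the integrand in \eqref{eq_Pxidziura} is $[\tilde u^2\cos^2(\tilde v/2)+\tfrac12\sin^2(\tilde v/2)]\tilde q$, which equals $\tfrac12\tilde q>0$ at $\tilde v=-\pi$ and is not identically zero for $\tilde v<-\pi$, so that argument fails even on its own terms. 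Your parenthetical alternative (image has measure zero in $x$) again addresses the wrong direction --- what is needed is $\mathcal L^1(A_2)=0$ in the $\xi'$-variable.

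The paper closes this gap exactly via Proposition~\ref{Prop_Tbr}(iii): if $\xi'\in A_2$ then $T_{br}(\xi')\le t$, and since $\tilde v$ cannot rise above $-\pi$ once it has reached $-\pi$ (second equation of \eqref{eq_BCODE}), one must have $\tilde v(T_{br}(\xi'),\xi')>-\pi$, i.e.\ $\lim_{s\uparrow T_{br}(\xi')}u_x(s,\bar y(\xi')(s))\neq -\infty$; Proposition~\ref{Prop_Tbr}(iii) says this occurs only on a null set of $\xi'$. You cite the right proposition but do not deploy it correctly --- once you replace your ``Jacobian vanishes on the complement'' step with this argument, your proof coincides with the paper's.
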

\begin{proof}
 We show the full proof for $\tilde{P}$ only, the proof for $\tilde{P}_x$ being similar. $\tilde{P}$, by definition \eqref{eq_defPtxi}, is given by
\begin{equation*}
\tilde{P}(t,\xi):=\frac 1 2 \int_{-\infty}^{\infty} e^{-|y(t,\xi)-x|} \left(u^2(t,x)+\frac 1 2 u_x^2(t,x)\right)dx.
\end{equation*}
By Theorem \ref{Th_STT} the set $S_t(t)$ is of full Lebesgue measure and hence the above formula is equivalent to
\begin{equation*}
\tilde{P}(t,\xi)=\frac 1 2 \int_{S_t(t)} e^{-|y(t,\xi)-x|} \left(u^2(t,x)+\frac 1 2 u_x^2(t,x)\right)dx.
\end{equation*}
In the remaining part of the proof we use the fact that when we restrict ourselves to the set $S_t$, the algebraic manipulations can be performed as if the solution $u(t,x)$ and all the functions involved were Lipschitz continuous, similarly as the manipulations performed at the beginning of \cite[Section 3]{BC2} to derive equations \eqref{eq_BCODE}.

Let us change the variable $x=y(t,\xi')$. Since 
\begin{equation*}
\bold{1}_{x \in S_t(t)} = \bold{1}_{y(t,\xi') \in S_t(t)} = \bold{1}_{\bar{y}(\xi')(t) \in S_t(t)} = \bold{1}_{\bar{y}(\xi')\in S_t}
\end{equation*}
we obtain that for $x \in S_t(t)$ there exists a unique $\xi'$ such that $x=y(t,\xi')$ and
\begin{eqnarray*}
\tilde{P}(t,\xi) &=& \frac 1 2 \int_{\{\bar{y}(\xi') \in S_t\}} e^{-|y(t,\xi) - y(t,\xi')|} \left(\tilde{u}^2(t,\xi') + \frac 1 2 \tilde{u}_x^2(t,\xi')\right) \frac {\partial x}{\partial{\bar{y}}} \frac{\partial \bar{y}}{\partial \xi'} d\xi'\\
&=& \frac 1 2 \int_{\{\bar{y}(\xi') \in S_t\}} e^{-|y(t,\xi) - y(t,\xi')|}\\&&\qquad\qquad \left(\tilde{u}^2(t,\xi') + \frac 1 2 \tilde{u}_x^2(t,\xi')\right) \tilde{q}(t,\xi') \frac{1+\tilde{u}_x^2(0,\xi')}{1+\tilde{u}_x^2(t,\xi')}\frac{1}{1+\tilde{u}_x^2(0,\xi')} d\xi'\\
&=& \frac 1 2 \int_{\{\bar{y}(\xi') \in S_t\}} e^{-|y(t,\xi) - y(t,\xi')|} \\&&\qquad\qquad\left(\tilde{u}^2(t,\xi')\cos^2\left(\frac {\tilde{v}(t,\xi')}{2}\right) + \frac 1 2 \sin^2\left(\frac{\tilde{v}(t,\xi')}{2}\right)\right) \tilde{q}(t,\xi')d\xi',
\end{eqnarray*}
where $\tilde{u}_x^2(t,\xi):= u_x^2(t,\bar{y}(\xi)(t))$ and in the last equality we used the algebraic identities 
\begin{eqnarray*}
\frac{1}{1+\tilde{u}_x^2(t,\xi')} = \cos^2\left(\frac {\tilde{v}(t,\xi')}{2}\right),\\
\frac{\tilde{u}_x^2(t,\xi')}{1+\tilde{u}_x^2(t,\xi')} = \sin^2\left(\frac {\tilde{v}(t,\xi')}{2}\right),
\end{eqnarray*}
which follow from the definition of $\tilde{v}$, \eqref{eq_vvv}, compare  identities (3.5)-(3.6) in \cite{BC2}. Above, we used also \eqref{eq_BCTrans}, by which $\frac {\partial \bar{y}}{\partial \xi'} = \frac {1}{1+\tilde{u}_x^2(0,\xi)}$ (in particular the change of variables $\xi' \to \bar{y}$ is Lipschitz)
and the fact that the change of variables $\bar{y} \to x$ is well-defined by Proposition \ref{Prop_ChangeOfV} and for $\bar{y}(\xi') \in S_t$ we have 
$$\partial x/\partial{\bar{y}}=M_t' = e^{\int_0^t u_x(s,\bar{y}(\xi')(s))ds}= \tilde{q}(t,\xi') \frac{1+\tilde{u}_x^2(0,\xi')}{1+\tilde{u}_x^2(t,\xi')},$$ see \eqref{eq_qqq}. To finish the proof we need to show that
\begin{enumerate}[i)]
\item The integral $\int_{\{\bar{y}(\xi') \in S_t\}} \dots$ can be equivalently  performed over the set $\{\tilde{v}(\xi')>-\pi\}=\{\xi': \tilde{v}(t,\xi')>-\pi\}$,
\item $|y(t,\xi)-y(t,\xi')| = \int_{\{s \in [\xi,\xi'], \tilde{v}(s)>-\pi\}} \cos^2\frac {\tilde{v}(s)}{2} \tilde{q}(s)ds.$
\end{enumerate}
To prove i) we observe that, by \eqref{eq_vvv},
\begin{equation*}
\{\bar{y}(\xi') \in S_t\} \subset \{\tilde{v}(t,\xi')>-\pi\}.
\end{equation*} 
Hence, we can write
\begin{equation*}
\{\tilde{v}(t,\xi')>-\pi\} = A_1 \cup A_2,
\end{equation*}
where
\begin{eqnarray*}
A_1 &=& \{\bar{y}(\xi') \in S_t\},\\
A_2 &=& \{\bar{y}(\xi') \notin S_t\} \cap \{\tilde{v}(t,\xi')>-\pi\},\\
\end{eqnarray*}
and $A_1 \cap A_2 = \emptyset$. Note that if $\bar{y}(\xi') \notin S_t$ then $\{T_{br}(\xi')\le t\}$. Moreover, if $\tilde{v}(t,\xi')>-\pi$ then also $\tilde{v}(T_{br}(\xi'),\xi')>-\pi$ by second equation of \eqref{eq_BCODE}. More precisely, if $\tilde{v}(T_{br}(\xi'),\xi')=-\pi$ then for every $s \in [T_{br}(\xi'),t]$ we have, by definition of $\tilde{v}$ in Part 1, that $\tilde{v}(s)\le -\pi$. Proposition \ref{Prop_Tbr}iii) yields now $\mathcal{L}^1(A_2)=0$, which concludes the proof of i).
To prove ii), we assume without loss of generality that $y(t,\xi)>y(t,\xi')$ and calculate
\begin{eqnarray*}
|y(t,\xi) - y(t,\xi')| &=& \int_{[y(t,\xi'),y(t,\xi)]} dx = \int_{[y(t,\xi'),y(t,\xi)] \cap S_t(t)} dx \\&=&  \int_{\{s: s \in [\xi',\xi], \bar{y}(s) \in S_t\}} \frac {\partial x}{\partial{\bar{y}}} \frac {\partial \bar{y}}{\partial{s}}ds = \int_{\{s: s \in [\xi',\xi], \bar{y}(s) \in S_t\}} \cos^2\frac{\tilde{v}(s)}{2} \tilde{q}(s) ds, 
\end{eqnarray*}
where $\bar{y}=\bar{y}(s)$ and, as before, the change of variables on $S_t$ is allowed. We conclude, using i).
The proof for $\tilde{P}_x$ is similar since 
\begin{eqnarray*}
\tilde{P}_x(t,\xi)&:=&\frac 1 2 \int_{-\infty}^{\infty} sgn(x-y(t,\xi)) e^{-|y(t,\xi)-x|} \left(u^2(t,x)+\frac 1 2 u_x^2(t,x)\right)dx\\
&=&\frac 1 2 \int_{\{\bar{y}(\xi') \in S_t\}} sgn(y(t,\xi')-y(t,\xi))e^{-|y(t,\xi) - y(t,\xi')|}\\ && \qquad\qquad\qquad\qquad\left(\tilde{u}^2(t,\xi') + \frac 1 2 \tilde{u}_x^2(t,\xi')\right) \frac {\partial x}{\partial{\bar{y}}} \frac{\partial \bar{y}}{\partial \xi'} d\xi'\\
&=&\frac 1 2 \left(\int_{\{\bar{y}(\xi') \in S_t\} \cap \{\xi'>\xi\}} - \int_{\{\bar{y}(\xi') \in S_t\} \cap \{\xi'<\xi\}} \right) \\&& \qquad e^{-|y(t,\xi) - y(t,\xi')|} \left(\tilde{u}^2(t,\xi') + \frac 1 2 \tilde{u}_x^2(t,\xi')\right) \frac {\partial x}{\partial{\bar{y}}} \frac{\partial \bar{y}}{\partial \xi'} d\xi'
\end{eqnarray*}
and to conclude we handle exactly as before for $\tilde{P}$.
\end{proof}

Let us now verify that $(\tilde{u},\tilde{v},\tilde{q})$ satisfies the system \eqref{eq_BCODE}. For  $t\ge T_{br}(\xi)$ this is clear, since the solutions have been defined by direct integration of $\eqref{eq_BCODE}$ and by Lemma \ref{Lem_Pequivtilde} the formulas \eqref{eq_defPtxi} and \eqref{eq_defPxtxi} coincide with \eqref{eq_Pxidziura} and \eqref{eq_Pxxidziura}, respectively.
Thus, we focus on the case $t < T_{br}(\xi)$ and, using the defining formulas \eqref{eq_uuu}-\eqref{eq_qqq} as well as the fact that $u$ is a weak solution of \eqref{eq_WeakCH1}-\eqref{eq_WeakCH3} which satisfies \eqref{Eq_PropL} and equations from Proposition \ref{Prop_Daf}, calculate:
\begin{eqnarray}
\partial_t \tilde{u}(t,\xi) &=& \frac {d}{dt} u(t,(\bar{y}(\xi))(t)) = -P_x(t,\bar{y}(\xi)(t))= -\tilde{P}_x(t,\xi), \nonumber\\
\partial_t \tilde{v}(t,\xi) &=& 2 \frac {d}{dt} \arctan(u_x(t,\bar{y}(\xi)(t))) \nonumber\\
&=&  \frac {2}{1+u_x^2(t,\bar{y}(\xi)(t))} \frac {d}{dt} u_x(t,\bar{y}(\xi)(t)) \nonumber\\
&=& \frac {2(u^2(t,\bar{y}(\xi)(t))-\frac 1 2 u_x^2(t,\bar{y}(\xi)(t)) - P(t,\bar{y}(\xi)(t)))}{1+u_x^2(t,\bar{y}(\xi)(t))}\nonumber \\
 &=& \frac {2\tilde{u}^2(t,\xi)- \tilde{u}_x^2(t,\xi) - 2\tilde{P}(t,\xi)}{1+\tilde{u}_x^2(t,\xi)},\nonumber\\
\partial_t \tilde{q}(t,\xi) &=& \frac {d}{dt}\left(e^{\int_0^t u_x(s,\bar{y}(\xi)(s))ds} \frac {1+u_x^2(t,\bar{y}(\xi)(t))}{1+{u}_x^2(0,\bar{y}(\xi))}\right) \nonumber\\
&=&
u_x(t,\bar{y}(t,\xi))\tilde{q}(t,\xi) + \frac {e^{\int_0^t u_x(s,\bar{y}(\xi)(s))ds}}{1+u_x^2(0,\bar{y}(\xi))} 2u_x(t,(\bar{y}(\xi))(t) \frac {d}{dt} u_x(t,\bar{y}(\xi)(t))\nonumber \\
&=& \tilde{u}_x(t,\xi) \tilde{q}(t,\xi) \left[1 + \frac{2(u^2(t,\bar{y}(\xi)(t))-\frac 1 2 u_x^2(t,\bar{y}(\xi)(t)) - P(t,\bar{y}(\xi)(t)))}{1+u_x^2(t,\bar{y}(\xi)(t))} \right] \nonumber \\
&=& \tilde{u}_x(t,\xi) \tilde{q}(t,\xi)\left[\frac{1+2\tilde{u}^2(t,\xi)-2\tilde{P}(t,\xi)}{1+\tilde{u}_x^2(t,\xi)} \right]. \label{eq_q}
\end{eqnarray}
To conclude, we observe that the final expressions for $\partial_t \tilde{v}$ and $\partial_t \tilde{q}$ are equivalent, by an algebraic manipulation, using formulas (cf. formulas (3.5)-(3.6) and (3.11)-(3.12) in \cite{BC2})
\begin{eqnarray*}
\frac {1}{1+\tilde{u}_x^2} = \cos^2(\tilde{v}/2) , \ \frac {\tilde{u}_x^2}{1+\tilde{u}_x^2} = \sin^2(\tilde{v}/2), \ 1+\cos(\tilde{v}) =  2\cos^2(\tilde{v}/2), \ \frac{\tilde{u}_x}{1+\tilde{u}_x^2}=\frac 1 2 \sin(\tilde{v}), \\  
\end{eqnarray*}
to the right hand side of \eqref{eq_BCODE} and use Lemma \ref{Lem_Pequivtilde}.

\begin{remark} 
\label{rem_every}
Observe that $\tilde{u}(t,\xi) = u(t,\bar{y}(\xi)(t))$ for every $t \in [0,T]$. Indeed, for $t \in [0,T_{br}(\xi)]$ this follows from definition \eqref{eq_uuu} and continuity of $u$. Otherwise, it suffices to note that 
$$\partial_t \tilde{u}(t,\xi) = -P_x(t,\bar{y}(\xi)(t))=\frac d {dt} u(t,\bar{y}(\xi)(t))$$ holds also for $t \ge T_{br}(\xi)$ due to Proposition \ref{Prop_Daf} and definition of $\tilde{u}(t,\xi)$ for $t\ge T_{br}(\xi)$, which consists in integrating \eqref{eq_BCODE}. 
\end{remark}

{\bf Part 3.} Since the triple $(\tilde{u},\tilde{v},\tilde{q})$ obtained above satisfies the system \eqref{eq_BCODE}, it is also a fixed point of the Picard operator $\mathcal{P}$. Moreover, by the a priori estimates in \cite[Section 4]{BC2} as well as equation \eqref{eq_C1C} the triple $(\tilde{u},\tilde{v},\tilde{q})$ belongs to the domain $\mathcal{D}$. Hence, by Theorem 4.1 in \cite{BC2}, $(\tilde{u},\tilde{v},\tilde{q})$ depends only on the initial condition.

To finish the proof, let us consider two dissipative solutions $u_1(t,x)$ and $u_2(t,x)$ satisfying $u_1(0,x)=u_2(0,x)$ for $x \in \mathbb{R}$. By the uniqueness of solution of \eqref{eq_BCODE} we have that the corresponding triples $(\tilde{u}_1,\tilde{v}_1,\tilde{q}_1)$ and $(\tilde{u}_2,\tilde{v}_2,\tilde{q}_2)$ are equal. 
The transformation of variables $x \mapsto \xi$ is bijective and is the same for $u_1(t,x)$ and $u_2(t,x)$ since it depends only on the initial condition. Hence, for every $\zeta \in \mathbb{R}$ there exists a unique $\xi$ such that $\bar{y}(\xi) = \zeta$. Consequently, denoting $\bar{y}(\xi)_1(\cdot)$ the unique forwards characteristic of $u_1$ emanating from $\bar{y}(\xi)$ and $\bar{y}(\xi)_2(\cdot)$ the unique forwards characteristic of $u_2$ emanating from $\bar{y}(\xi)$, we obtain
\begin{equation*}
\frac {d}{dt} \bar{y}(\xi)_1(t) = u_1(t,\bar{y}(\xi)_1(t)) = \tilde{u}_1(t,\xi) = \tilde{u}_2(t,\xi) = u_2(t,\bar{y}(\xi)_2(t)) = \frac {d}{dt} \bar{y}(\xi)_2(t)
\end{equation*}
where we used the definitions of $\bar{y}(\xi)_1$, $\bar{y}(\xi)_2$ and definitions of $\tilde{u}_1(t,\xi),\tilde{u}_2(t,\xi)$. Hence,
\begin{equation*}
\frac {d}{dt} ( \bar{y}(\xi)_1(t)-\bar{y}(\xi)_2(t))=0
\end{equation*}
and thus
\begin{equation*}
\bar{y}(\xi)_1(t) = \bar{y}(\xi)_2(t)
\end{equation*}
for $t$ small enough,
which means that the characteristics of $u_1(t,x)$ and $u_2(t,x)$ are identical.

Now, by Theorem \ref{Th_STT}, for all $t \ge 0$ small enough and almost every $x \in \mathbb{R}$ (i.e. $x\in S^1_t(t) \cap S^2_t(t)$, where $S_t^1$ corresponds to $u_1$ and $S_t^2$ corresponds to $u_2$) there is a unique $\zeta \in \mathbb{R}$ and a unique characteristic $\zeta(\cdot)$ of both $u_1$ and $u_2$ satisfying $\zeta(t)=x$.
Moreover, $\zeta \in S^1_t \cap S^2_t$ and there is a unique $\xi$ such that $\bar{y}(\xi)=\zeta$.
Thus, $$u_1(t,x) = u_1(t,\zeta(t)) = u_1(t,\bar{y}(\xi)(t)) = \tilde{u}_1(t,\xi).$$
Similarly,  
\begin{equation*}
u_2(t,x) = u_2(t,\zeta(t)) = u_2(t,\bar{y}(\xi)(t)) = \tilde{u}_2(t,\xi).
\end{equation*} 
Since $\tilde{u}_1(t,\xi)=\tilde{u}_2(t,\xi)$ we conclude
\begin{equation*}
u_1(t,x)=u_2(t,x)
\end{equation*}
for all $t$ small enough and $x \in S^1_t(t) \cap S^2_t(t)$. Thus $u_1(t,x)=u_2(t,x)$ a.e. and by continuity of $u_1(t,x)$ and $u_2(t,x)$ we obtain that $u_1 = u_2$ locally in time. 

\begin{remark}
Above one  can, alternatively, argue without 
Theorem \ref{Th_STT}. Indeed, the fact that all characteristics of $u_1$ and $u_2$ are unique forwards and identical implies that for every $t$ small enough and \emph{every} $x$ there exists a nonunique $\zeta \in \mathbb{R}$ such that $\zeta(\cdot)$ is a characteristic of both $u_1$ and $u_2$ and $\zeta(t)=x$. Taking any such $\zeta$ and using Remark \ref{rem_every} we obtain, as above, $u_1(t,x) = u_1(t,\zeta(t)) = u_1(t,\bar{y}(\xi)(t)) = \tilde{u}_1(t,\xi)$ for every $x \in \mathbb{R}$. Similarly, $u_2(t,x) = \tilde{u}_2(t,\xi)$ for every $x \in \mathbb{R}$, which implies $u_1(t,x) = u_2(t,x)$ for every $t$ small enough and every $x \in \mathbb{R}$.
\end{remark}

Finally, to prove global uniqueness of dissipative solutions we observe that for fixed $t_0 \ge 0$ we have $u_x \le \bar{C}$ on $[t_0,\infty) \times \mathbb{R}$ for some global constant $\bar{C}$ and handle similarly as above, using Corollary \ref{Cor_globally} instead of Theorem \ref{Th_STT}.

\end{document}